\documentclass{amsart}

\usepackage{amsmath}
\usepackage{amsfonts}
\usepackage{amsthm} 
\usepackage{amssymb}

\usepackage{graphicx} 

\usepackage{color} 

\newtheorem{thm}{Theorem}[section]
\newtheorem{prop}[thm]{Proposition}
\newtheorem{cor}[thm]{Corollary}
\newtheorem{lem}[thm]{Lemma}
\newtheorem{rem}{Remark}
\newtheorem{exmp}{Example}[section]
\newtheorem{defn}[thm]{Definition}

\numberwithin{equation}{section} 

\makeatletter 
\@namedef{subjclassname@2010}{2010 Mathematics Subject Classification} 
\makeatother 

\begin{document}

\title[Generalized Fresnel integrals as oscillatory integrals]
{Generalized Fresnel integrals\\as oscillatory integrals\\with positive real power phase functions\\and applications to asymptotic expansions} 

\author{Toshio NAGANO and Naoya MIYAZAKI} 

\address{Department of Liberal Arts, Faculty of Science and Technology, Tokyo University of Science, 2641, Yamazaki, Noda, Chiba 278-8510, JAPAN} 
\email{tonagan@rs.tus.ac.jp}

\address{Department of Mathematics, Faculty of Economics, Keio University, Yokohama, 223-8521, JAPAN} 
\email{miyazaki@a6.keio.jp}

\thanks{The first author was supported by Tokyo University of Science Graduate School doctoral program scholarship 
and an exemption of the cost of equipment from 2016 to 2018 
and would like to thank to Professor Minoru Ito for 
giving me an opportunity of studies and preparing the environment.} 

\keywords{Fresnel integral, oscillatory integral, asymptotic expansion, stationary phase method.} 

\subjclass[2010]{Primary 42B20 ; Secondary 41A60, 33B20} 

%\date {April 26, 2020} 
\date {September 12, 2021} 

\begin{abstract} 
In this paper, we first generalize the Fresnel integrals by changing of a path 
for integration in the proof of the Fresnel integrals by Cauchy's integral theorem. 
Next, according to oscillatory integral, we also obtain further generalization of 
the extended Fresnel integrals.  
Moreover by using this result,  
we have an asymptotic expansion of an oscillatory integral with a positive real parameter, 
for a phase function with a degenerate critical point expressed by positive real power, including a moderate oscillation, 
and for a suitable amplitude function. 
This result gives a finer extension of the stationary phase method in one variable, 
which is known as a method for an asymptotic expansion of an oscillatory integral 
of a phase function with a non-degenerate critical point. 
\end{abstract} 

\maketitle 

\section{Introduction} 

In the present paper, 
we study a generalization of the Fresnel integrals: 
\begin{align} 
\int_{0}^{\infty} e^{\pm ix^{2}} dx 
= \frac{\sqrt{\pi}}{2} e^{\pm i \frac{\pi}{4}} \notag 
\end{align} 
and their applications to an asymptotic expansion of oscillatory integrals in one variable. 
In particular, we are interested in oscillatory integrals for phase functions with a degenerate critical point expressed by positive power, including a moderate oscillation, and for suitable amplitudes. 

As to proofs of the Fresnel integrals, 
several ways are known, for example \cite{Sugiura} I p.326, II p.85, 245, etc. 
In the proofs, we especially focus on the way of applying Cauchy's integral theorem 
to a holomorphic function $e^{-iz^{2}}$ on the domain with a fan of the center at the origin of Gaussian plane as a boundary(\cite{Ito-Komatsu} p.23). 
By changing the fan used in the proof with a holomorphic function $e^{-iz^{p}}z^{q-1}$ as an integrand, 
we can generalize the Fresnel integrals 
for $p>q>0$ in the following way: 
\begin{align} 
I_{p,q}^{\pm} 
:= \int_{0}^{\infty} e^{\pm ix^{p}} x^{q-1} dx 
= p^{-1} e^{\pm i\frac{\pi}{2} \frac{q}{p}} 
\varGamma \left( \frac{q}{p} \right), \notag 
\end{align} 
where $\varGamma$ is the Gamma function and double signs $\pm$ are in same order (Lemma \ref{Generalized the Fresnel integrals}). 
As to the case of $p > 0$ and $q > 0$, 
by making a sense 
of these integrals via oscillatory integral, 
we obtain 
\begin{align} 
\tilde{I}_{p,q}^{\pm} 
:= \lim_{\varepsilon \to +0} \int_{0}^{\infty} e^{\pm ix^{p}} x^{q-1} \chi (\varepsilon x) dx 
= p^{-1} e^{\pm i\frac{\pi}{2} \frac{q}{p}} 
\varGamma \left( \frac{q}{p} \right), \notag 
\end{align} 
where $\chi \in \mathcal{S}(\mathbb{R})$ with $\chi(0) = 1$ and $0 < \varepsilon < 1$ (Theorem \ref{th01} (i)). 
Moreover this is extended to a meromorphic function on $\mathbb{C}$ by analytic continuation (Theorem \ref{th01} (ii)). 
We call $\tilde{I}^{\pm}_{p,q}$ ``generalized Fresnel integrals''. 
This result can be considered as an extension of 
the case of $\lambda = q-1$ and $\xi = 1$ 
in the Fourier transform of Gel'fand-Shilov 
generalized function $\mathcal{F}[{x_{+}}^{\lambda}](\xi)$ 
with $\lambda \in \mathbb{C} \setminus \{ -1 \}$: 
\begin{align} 
\mathcal{F}[{x_{+}}^{q-1}](1) 
:= \lim_{\tau \to +0} \int_{0}^{\infty} e^{ix} x^{q-1} e^{-\tau x} dx 
= e^{i\frac{\pi}{2}q} \varGamma (q), \notag 
\end{align} 
where $\mathrm{Re} \tau > 0$ (\cite{Gel'fand-Shilov} p.170.). 

By using our result for $p>0$ and $q>0$, 
we can obtain an asymptotic expansion of an oscillatory integral with a positive real parameter, 
for a phase function with a {\bf degenerate critical point} expressed by positive real power, including a moderate oscillation, and for an amplitude function belonging to the class $\mathcal{A}^{\tau}_{\delta}(\mathbb{R})$\footnote{which is wider than the Schwartz space $\mathcal{S}(\mathbb{R})$,}(Definition \ref{A_tau_delta}) in the following way: for any $N \geq p+1$, as $\lambda \to \infty$, 
\begin{align} 
\lim_{\varepsilon \to +0} \int_{0}^{\infty} e^{\pm i \lambda x^{p}} a (x) \chi (\varepsilon x) dx 
= \sum_{k=0}^{N-[p]-1} \tilde{I}_{p,k+1}^{\pm} \frac{a^{(k)}(0)}{k!} \lambda ^{-\frac{k+1}{p}} + O\left( \lambda ^{-\frac{N-p+1}{p}} \right) 
\notag 
\end{align} 
(Theorem \ref{th02} (i)). 
In particular, if $p=m \in \mathbb{N}$, then for any $N > m$, as $\lambda \to \infty$, 
\begin{align} 
\lim_{\varepsilon \to +0} \int_{-\infty}^{\infty} e^{\pm i \lambda x^{m}} a (x) \chi (\varepsilon x) dx 
= \sum_{k=0}^{N-m-1} \tilde{c}_{k}^{\pm} \frac{a^{(k)}(0)}{k!} \lambda ^{-\frac{k+1}{m}} + O\left( \lambda ^{-\frac{N-m+1}{m}} \right), 
\notag 
\end{align} 
where $\tilde{c}_{k}^{\pm} := \tilde{I}_{m,k+1}^{+} + (-1)^{k} \tilde{I}_{m,k+1}^{\pm \pm_{m}}$ and 
\begin{align} 
\tilde{I}_{m,k+1}^{\pm \pm_{m}} 
:= \lim_{\varepsilon \to +0} \int_{0}^{\infty} e^{\pm (-1)^{m} ix^{m}} x^{k} \chi (\varepsilon x) dx 
= m^{-1} e^{\pm (-1)^{m} i\frac{\pi}{2} \frac{k+1}{m}} \varGamma \left( \frac{k+1}{m} \right). 
\notag 
\end{align} 
(Theorem \ref{th02} (ii) and Definition \ref{generalized_Fresnel_pm_pm_m}). 
This result gives an extension of the stationary phase method in one variable (Example \ref{stationary example} and Corollary \ref{cor01} (iv)). 
We note that it is known as a method for an asymptotic expansion of an oscillatory integral 
of a phase function with a {\bf non-degenerate critical point}. 

The fact above implies that we can obtain an asymptotic expansion of an oscillatory integral for a phase function with a {\bf degenerate critical point} in several variables as an extension of the stationary phase method. 

For the purpose above, we first give a summary of oscillatory integrals and the original stationary phase method 
relating to theory of asymptotic expansion in \S 2. 

In \S 3, we show existence of oscillatory integrals with positive real power phase functions used in later sections. 

In \S 4, we extend the Fresnel integrals 
by changing of a path 
for integration 
in the well-known proof using Cauchy's integral theorem. 
And then, according to oscillatory integral, 
we also obtain further generalization of the Fresnel integrals. 

Furthermore, in \S 5, 
according to generalized Fresnel integrals, we establish an asymptotic expansion of oscillatory integrals with positive real power phase functions. 

To the end of \S1, we remark 
notation which will be used in this paper: 

$\alpha = (\alpha_{1},\dots,\alpha_{n}) \in \mathbb{Z}_{\geq 0}^{n}$ 
is a multi-index with a length 
$| \alpha | = \alpha_{1} + \cdots + \alpha_{n}$, 
and then, we use 
$x^{\alpha} = x_{1}^{\alpha_{1}} \cdots x_{n}^{\alpha_{n}}$, 
$\alpha! = \alpha_{1}! \cdots \alpha_{n}!$, 
$\partial_{x}^{\alpha} 
= \partial_{x_{1}}^{\alpha_{1}} \cdots \partial_{x_{n}}^{\alpha_{n}}$ 
and 
$D_{x}^{\alpha} = D_{x_{1}}^{\alpha_{1}} \cdots D_{x_{n}}^{\alpha_{n}}$, 
where 
$\partial_{x_{j}} = \frac{\partial}{\partial x_{j}}$ 
and $D_{x_{j}} = i^{-1} \partial_{x_{j}}$ 
for $x = (x_{1}, \dots, x_{n})$. 

$C^{\infty}(\mathbb{R}^{n})$ is 
the set of complex-valued functions of class $C^{\infty}$ on $\mathbb{R}^{n}$. 
$C^{\infty}_{0}(\mathbb{R}^{n})$ is 
the set of all $f \in C^{\infty}(\mathbb{R}^{n})$ with compact support. 
$\mathcal{S}(\mathbb{R}^{n})$ is the Schwartz space of rapidly decreasing functions of class $C^{\infty}$ on $\mathbb{R}^{n}$, 
that is, the Fr\'{e}chet space of all $f \in C^{\infty}(\mathbb{R}^{n})$ 
such that $\max_{k+|\alpha| \leq m} \sup_{x \in \mathbb{R}^{n}} \langle x \rangle^{k} | \partial_{x}^{\alpha} f (x) | < \infty$ for $m \in \mathbb{Z}_{\geq 0}$, where $\langle x \rangle := (1+|x|^{2})^{1/2}$. 

$[x]$ is the Gauss' symbol for $x \in \mathbb{R}$, that is, $[x] \in \mathbb{Z}$ such that $x-1 < [x] \leq x$. 

$[x)$ is the greatest integer smaller than real number $x$, that is, $x-1 \leq [x) < x$. 

$O$ means the Landau's symbol, that is, 
$f(x) = O(g(x))~(x \to a)$ if $|f(x)/g(x)|$ 
is bounded as $x \to a$ for functions $f$ and $g$, where $a \in \mathbb{R} \cup \{ \pm \infty \}$. 

$\delta_{ij}$ is the Kronecker's delta, that is, $\delta_{ii} = 1$, and $\delta_{ij} = 0$ if $i \ne j$. 

$\tau^{+} := \max \{ \tau,0 \}$ for $\tau \in \mathbb{R}$. 

$\pm_{m} = +$ if $m$ is even, and $\pm_{m} = -$ if $m$ is odd for $m \in \mathbb{N}$, that is, $\pm_{m} 1 = (-1)^{m}$. 

\section{Preliminary}

In this section, 
we recall the oscillatory integrals and 
the original stationary phase method. 
\begin{defn} 
Let $\lambda > 0$ 
and let $\phi$ be a real-valued function of class $C^{\infty}$ on $\mathbb{R}^{n}$ 
and $a \in C^{\infty}(\mathbb{R}^{n})$. 
If there exists the following limit of improper integral: 
\begin{align} 
\tilde{I}_{\phi}[a](\lambda) 
:= Os\text{-}\int_{\mathbb{R}^{n}} e^{i \lambda \phi(x)} a (x) dx 
:= \lim_{\varepsilon \to +0} \int_{\mathbb{R}^{n}} 
e^{i \lambda \phi(x)} a (x) \chi (\varepsilon x) dx \notag 
\end{align} 
independent of $\chi \in \mathcal{S}(\mathbb{R}^{n})$ with $\chi(0) = 1$ and 
$0 < \varepsilon < 1$, 
then we call $\tilde{I}_{\phi}[a](\lambda)$ an oscillatory integral 
where we call $\phi$ 
$($resp. $a$$)$ a phase function $($resp. an amplitude function$)$. 
\end{defn} 

If we suppose a certain suitable conditions for $\phi$ and $a$, 
then we can show $\tilde{I}_{\phi}[a](\lambda)$ exists independent of 
$\chi$ and $\varepsilon$ (Theorem \ref{Lax02} (iv)). 
The fundamental properties are the following 
(cf. \cite{Kumano-go} p.47.): 
\begin{prop} 
\label{chi epsilon} 
Let $\chi \in \mathcal{S}(\mathbb{R}^{n})$ with $\chi(0) = 1$. 
Then 
\begin{enumerate} 
\item[(i)] 
$\chi(\varepsilon x) \to 1$ uniformly on any compact set in $\mathbb{R}^{n}$ as $\varepsilon \to +0$. 
\item[(ii)] 
For each multi-index $\alpha \in \mathbb{Z}_{\geq 0}^{n}$, 
there exists a positive constant $C_{\alpha}$ independent of $0 < \varepsilon < 1$ 
such that 
for any $x \in \mathbb{R}^{n}$ 
\begin{align} 
| \partial_{x}^{\alpha} (\chi(\varepsilon x)) | \leq C_{\alpha} \langle x \rangle^{-|\alpha|}. \notag 
\end{align} 
\item[(iii)] 
For any multi-index $\alpha \in \mathbb{Z}_{\geq 0}^{n}$ with $\alpha \ne 0$, 
$\partial_{x}^{\alpha} \chi(\varepsilon x) \to 0$ uniformly in $\mathbb{R}^{n}$ as $\varepsilon \to +0$. 
\end{enumerate} 
\end{prop}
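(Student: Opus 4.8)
The plan is to prove Proposition~\ref{chi epsilon} one part at a time, exploiting that each claim is a statement about the dilated function $\chi(\varepsilon x)$ built from a fixed Schwartz function $\chi$ with $\chi(0)=1$.

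For part (i), I would argue directly from continuity. Since $\chi\in\mathcal{S}(\mathbb{R}^{n})\subset C^{\infty}(\mathbb{R}^{n})$ is continuous with $\chi(0)=1$, the map $(\varepsilon,x)\mapsto\chi(\varepsilon x)-1$ is continuous and vanishes at $\varepsilon=0$. Fix a compact set $K\subset\mathbb{R}^{n}$ and let $R:=\sup_{x\in K}|x|<\infty$. For $x\in K$ and $0<\varepsilon<1$ the point $\varepsilon x$ lies in the compact ball $\overline{B(0,R)}$, on which $\chi$ is uniformly continuous; given $\eta>0$, pick $\rho>0$ with $|\chi(y)-\chi(0)|<\eta$ whenever $|y|<\rho$, and then for $\varepsilon<\rho/(R+1)$ we get $|\varepsilon x|<\rho$, hence $|\chi(\varepsilon x)-1|<\eta$ uniformly in $x\in K$. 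This is the uniform convergence claimed.

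For part (ii), the key computation is the chain rule for the dilation: for $\alpha\neq 0$ one has $\partial_{x}^{\alpha}\big(\chi(\varepsilon x)\big)=\varepsilon^{|\alpha|}(\partial^{\alpha}\chi)(\varepsilon x)$, while for $\alpha=0$ the statement is just boundedness of $\chi$. Since $\partial^{\alpha}\chi\in\mathcal{S}(\mathbb{R}^{n})$, the seminorm $\sup_{y}\langle y\rangle^{|\alpha|}|(\partial^{\alpha}\chi)(y)|=:M_{\alpha}$ is finite, giving $|(\partial^{\alpha}\chi)(\varepsilon x)|\le M_{\alpha}\langle\varepsilon x\rangle^{-|\alpha|}$. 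The remaining point, and the only place that needs a little care, is to absorb the factor $\varepsilon^{|\alpha|}$ and convert $\langle\varepsilon x\rangle^{-|\alpha|}$ into $\langle x\rangle^{-|\alpha|}$ with a constant uniform in $\varepsilon\in(0,1)$. I would use the elementary inequality $\varepsilon\langle x\rangle\le\langle\varepsilon x\rangle$ for $0<\varepsilon<1$ (equivalently $\varepsilon^{2}(1+|x|^{2})\le 1+\varepsilon^{2}|x|^{2}$, which holds since $\varepsilon^{2}\le 1$), whence $\varepsilon^{|\alpha|}\langle\varepsilon x\rangle^{-|\alpha|}\le\langle x\rangle^{-|\alpha|}$. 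Combining the two displays yields $|\partial_{x}^{\alpha}(\chi(\varepsilon x))|\le M_{\alpha}\langle x\rangle^{-|\alpha|}$, so $C_{\alpha}:=M_{\alpha}$ works.

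For part (iii), I would reuse the identity $\partial_{x}^{\alpha}\chi(\varepsilon x)=\varepsilon^{|\alpha|}(\partial^{\alpha}\chi)(\varepsilon x)$ with $\alpha\neq 0$. Since $\partial^{\alpha}\chi$ is bounded on $\mathbb{R}^{n}$ by $\|\partial^{\alpha}\chi\|_{\infty}<\infty$, we get the uniform bound $\sup_{x\in\mathbb{R}^{n}}|\partial_{x}^{\alpha}\chi(\varepsilon x)|\le\varepsilon^{|\alpha|}\|\partial^{\alpha}\chi\|_{\infty}\to 0$ as $\varepsilon\to+0$, because $|\alpha|\ge 1$. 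None of these steps presents a genuine obstacle; the whole proposition is a routine consequence of Schwartz decay together with the scaling behaviour of derivatives under dilation, and the single inequality $\varepsilon\langle x\rangle\le\langle\varepsilon x\rangle$ is the one small lemma worth isolating to make the constant in (ii) visibly independent of $\varepsilon$.
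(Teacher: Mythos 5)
Your proof is correct; all three parts follow exactly the standard route (continuity at the origin for (i), the dilation identity $\partial_{x}^{\alpha}(\chi(\varepsilon x))=\varepsilon^{|\alpha|}(\partial^{\alpha}\chi)(\varepsilon x)$ together with the inequality $\varepsilon\langle x\rangle\le\langle\varepsilon x\rangle$ for (ii), and boundedness of $\partial^{\alpha}\chi$ for (iii)). The paper itself states this proposition without proof, citing Kumano-go p.~47, and your argument is precisely the standard one given there, so there is nothing further to compare.
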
 

Next 
we summarize the Fourier transforms of rapidly decreasing function of class $C^{\infty}$. 
\begin{defn} 
\label{the Fourier transform} 
Let $f \in \mathcal{S}(\mathbb{R}^{n})$. 
Then 
we define by $\widehat{f}=\mathcal{F}[f]$ the Fourier transform of $f$ as 
\begin{align} 
\mathcal{F}[f](\xi) 
:= \frac{1}{(2\pi)^{\frac{n}{2}}} \int_{\mathbb{R}^{n}} 
e^{-i\langle x,\xi \rangle} f(x) dx, \notag 
\end{align} 
where $\langle x,\xi \rangle :=\sum_{k=1}^{n}x_{k} \xi _{k}$ 
for $x=(x_{1},\dots,x_{n})$ and $\xi =(\xi_{1},\dots,\xi _{n}) 
\in \mathbb{R}^{n}$. 
\end{defn} 

If $A$ is a real symmetric non-singular $n \times n$ matrix, 
then the Fourier transform of $e^{i (1/2) \langle Ax,x \rangle}$ is given 
in the following way (\cite{Hormander01}, \cite{Hormander02}, \cite{Duistermaat01}, \cite{Grigis-Sjostrand}, \cite{Fujiwara1}): 
\begin{prop} 
\label{the Fourier transform of e_ix2} 
Let $A$ be a real symmetric non-singular $n \times n$ matrix with ``$p$" 
positive and ``$n-p$" negative eigenvalues. 
\begin{enumerate} 
\item[(i)] 
If $A = \pm 1$ for $n=1$, 
then 
\begin{align} 
\mathcal{F}[e^{\pm i\frac{1}{2}x^{2}}](\xi) 
= e^{\pm i\frac{\pi}{4}} e^{\mp i\frac{1}{2} \xi^{2}}, \notag 
\end{align} 
where double signs $\pm, \mp$ are in same order. 
\item[(ii)] 
If $n \geq 1$, 
then 
\begin{align} 
\mathcal{F}[e^{i \frac{1}{2} \langle Ax,x \rangle}](\xi) 
= \frac{e^{i \frac{\pi}{4} \mathrm{sgn}A}}{|\det A|^{\frac{1}{2}}} 
e^{-i \frac{1}{2} \langle A^{-1}\xi,\xi \rangle}, \notag 
\end{align} 
where $\mathrm{sgn}A := p-(n-p)$. 
\end{enumerate}
\end{prop}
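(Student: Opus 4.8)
The plan is to reduce part (i) to the classical Fresnel integral quoted in the introduction and then to bootstrap from the one-dimensional case $A=\pm1$ to a general matrix in part (ii) by an orthogonal diagonalization that factors the transform into a product of one-dimensional ones. Throughout, the transform $\mathcal{F}[e^{i\frac12\langle Ax,x\rangle}]$ must be read in the oscillatory/regularized sense rather than as the absolutely convergent integral of Definition \ref{the Fourier transform}.

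For part (i) I would begin with $\mathcal{F}[e^{\pm i\frac12 x^2}](\xi)=\frac{1}{\sqrt{2\pi}}\int_{\mathbb R}e^{-ix\xi}e^{\pm i\frac12 x^2}\,dx$ and complete the square in the exponent, writing $\pm\frac{i}{2}x^2-ix\xi=\pm\frac{i}{2}(x\mp\xi)^2\mp\frac{i}{2}\xi^2$. Factoring out $e^{\mp i\frac12\xi^2}$ and translating $x\mapsto x\pm\xi$ leaves $\frac{1}{\sqrt{2\pi}}e^{\mp i\frac12\xi^2}\int_{\mathbb R}e^{\pm i\frac12 x^2}\,dx$. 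The residual integral is computed by the scaling $x=\sqrt{2}\,y$ together with $\int_0^\infty e^{\pm iy^2}\,dy=\frac{\sqrt\pi}{2}e^{\pm i\pi/4}$, giving $\int_{\mathbb R}e^{\pm i\frac12 x^2}\,dx=\sqrt{2\pi}\,e^{\pm i\pi/4}$ and hence the asserted value $e^{\pm i\pi/4}e^{\mp i\frac12\xi^2}$. Equivalently, and more cleanly for the convergence issue, one may insert the Gaussian regulator $e^{-\varepsilon x^2}$, evaluate $\int_{\mathbb R}e^{-(\varepsilon\mp i/2)x^2}\,dx=\sqrt{\pi/(\varepsilon\mp i/2)}$ exactly, and let $\varepsilon\to+0$.

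For part (ii), since $A$ is real symmetric there is an orthogonal $P$ with $P^{T}AP=D=\mathrm{diag}(\lambda_1,\dots,\lambda_n)$, where the $\lambda_j$ are the eigenvalues. Substituting $x=Py$ (Jacobian $1$) and setting $\eta=P^{T}\xi$ turns $\langle Ax,x\rangle$ into $\sum_j\lambda_j y_j^2$ and $\langle x,\xi\rangle$ into $\langle y,\eta\rangle$, so the transform factors as $\prod_{j=1}^{n}\frac{1}{\sqrt{2\pi}}\int_{\mathbb R}e^{-iy_j\eta_j}e^{i\frac12\lambda_j y_j^2}\,dy_j$. Each factor reduces to part (i) through the scaling $y_j=|\lambda_j|^{-1/2}u$, producing $\frac{e^{i\frac{\pi}{4}\mathrm{sgn}\lambda_j}}{|\lambda_j|^{1/2}}e^{-i\frac12\eta_j^2/\lambda_j}$. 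Multiplying these and using $\sum_j\mathrm{sgn}\lambda_j=p-(n-p)=\mathrm{sgn}A$, $\prod_j|\lambda_j|=|\det A|$, and $\sum_j\eta_j^2/\lambda_j=\langle D^{-1}\eta,\eta\rangle=\langle A^{-1}\xi,\xi\rangle$ (from $A^{-1}=PD^{-1}P^{T}$) yields the stated formula.

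The main obstacle is that none of these integrals converge absolutely, so every manipulation---completion of the square and the subsequent translation, the scaling changes of variables, and the factorization of the $n$-dimensional integral into one-dimensional factors---must be justified within the limit $\lim_{\varepsilon\to+0}$ defining the oscillatory integral, and the outcome must be shown independent of the cutoff $\chi$. The safe route is to carry out all elementary operations on the regularized integrand (with $\chi(\varepsilon x)$, or most conveniently the Gaussian $e^{-\varepsilon|x|^2}\in\mathcal{S}(\mathbb R^n)$, which equals $1$ at the origin), verify that each operation commutes with the regularization, and only then pass to $\varepsilon\to+0$; the orthogonal change of variables in (ii) is harmless here precisely because such regulators depend only on $|x|^2$ and are preserved by $P$.
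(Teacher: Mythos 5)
The paper does not prove this proposition: it is quoted in \S 2 as a standard preliminary with citations to H\"ormander, Duistermaat, Grigis--Sj\"ostrand and Fujiwara, so there is no in-paper argument to compare against. Your proposal is the standard proof found in those references --- completing the square and invoking the Fresnel integral for (i), then orthogonal diagonalization and factorization into one-dimensional transforms for (ii), with a Gaussian regulator $e^{-\varepsilon|x|^{2}}$ to justify each manipulation before passing to the limit --- and it is correct as sketched (the only steps left implicit, the contour/translation justification and the independence of the regularizing cutoff, are exactly the ones you flag).
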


By Proposition \ref{the Fourier transform of e_ix2}, 
we can obtain an asymptotic expansion of the oscillatory integral 
with a non-degenerate quadratic phase $\phi(x) = (1/2) \langle Ax,x \rangle$ 
in the following way: 
\begin{prop} 
\label{quadratic phase} 
Suppose that $\lambda > 0$, 
$a \in \mathcal{S}(\mathbb{R}^{n})$ 
and $A$ is a real symmetric non-singular $n \times n$ matrix. 
Then there exists a positive constant $C$ such that 
for any $N \in \mathbb{N}$ and $\lambda \geq 1$, 
\begin{align} 
&\int_{\mathbb{R}^{n}} e^{i\frac{1}{2} \lambda \langle Ax,x \rangle} a(x) dx \notag \\ 
&= (2\pi)^{\frac{n}{2}} \frac{e^{i\frac{\pi}{4} 
\mathrm{sgn}A}}{|\det A|^{\frac{1}{2}}} 
\sum_{k=0}^{N-1} \frac{1}{k!} 
\Big( -i\frac{1}{2} \langle A^{-1} D_{x},D_{x} \rangle \Big)^{k} \Big|_{x=0} 
a(x) \lambda^{-k-\frac{n}{2}} + R_{N}(\lambda) \notag 
\end{align} 
and 
\begin{align} 
|R_{N}(\lambda)|
\leq (2\pi)^{\frac{n}{2}} \frac{C}{|\det A|^{N+\frac{1}{2}}} 
\frac{1}{N!} \Big( \sum_{|\alpha| \leq 2(N+n)} 
\int_{\mathbb{R}^{n}} |\partial_{x}^{\alpha} a(x)| dx \Big) 
\lambda^{-N-\frac{n}{2}}. \notag 
\end{align} 
\end{prop}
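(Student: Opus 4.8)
The plan is to reduce the oscillatory integral to a Fourier-side integral by Parseval's formula, applying Proposition \ref{the Fourier transform of e_ix2} to the matrix $\lambda A$, and then to Taylor-expand the resulting Gaussian-type kernel in the small parameter $1/\lambda$. First I would note that, since $\lambda > 0$, the matrix $\lambda A$ is again real symmetric and non-singular with $\mathrm{sgn}(\lambda A) = \mathrm{sgn}A$, $\det(\lambda A) = \lambda^{n}\det A$ and $(\lambda A)^{-1} = \lambda^{-1}A^{-1}$. Hence Proposition \ref{the Fourier transform of e_ix2}(ii) gives
\[
\mathcal{F}[e^{i\frac{1}{2}\lambda\langle Ax,x\rangle}](\xi)
= \frac{e^{i\frac{\pi}{4}\mathrm{sgn}A}}{\lambda^{n/2}|\det A|^{1/2}}\,
e^{-i\frac{1}{2\lambda}\langle A^{-1}\xi,\xi\rangle}.
\]
Since $a \in \mathcal{S}(\mathbb{R}^{n}) \subset L^{1}(\mathbb{R}^{n})$ and the exponential is bounded, the integral on the left converges absolutely, and I would identify it with a Fourier-side integral via the multiplication formula $\langle u,\mathcal{F}\phi\rangle = \langle\mathcal{F}u,\phi\rangle$ applied to the tempered distribution $u = e^{i\frac{1}{2}\lambda\langle Ax,x\rangle}$ and $\phi = \mathcal{F}^{-1}[a]$, obtaining
\[
\int_{\mathbb{R}^{n}} e^{i\frac{1}{2}\lambda\langle Ax,x\rangle} a(x)\,dx
= \frac{e^{i\frac{\pi}{4}\mathrm{sgn}A}}{\lambda^{n/2}|\det A|^{1/2}}
\int_{\mathbb{R}^{n}} e^{-i\frac{1}{2\lambda}\langle A^{-1}\xi,\xi\rangle}\,\mathcal{F}^{-1}[a](\xi)\,d\xi.
\]

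Next I would Taylor-expand the kernel, writing
\[
e^{-i\frac{1}{2\lambda}\langle A^{-1}\xi,\xi\rangle}
= \sum_{k=0}^{N-1}\frac{1}{k!}\Big(-i\frac{1}{2\lambda}\langle A^{-1}\xi,\xi\rangle\Big)^{k} + r_{N}(\xi,\lambda),
\]
where the elementary estimate $\big|e^{iy}-\sum_{k=0}^{N-1}(iy)^{k}/k!\big|\leq |y|^{N}/N!$ yields $|r_{N}(\xi,\lambda)|\leq \frac{1}{N!}(2\lambda)^{-N}|\langle A^{-1}\xi,\xi\rangle|^{N}$. For the $k$-th main term I would exploit three facts: that $\langle A^{-1}\xi,\xi\rangle^{k}$ is a homogeneous polynomial of even degree $2k$; that multiplication by $\xi_{j}$ corresponds under $\mathcal{F}^{-1}$ to $D_{x_{j}}$ up to a sign $(-1)$ that cancels in even total degree, i.e. $\xi^{\mu}\mathcal{F}^{-1}[a](\xi) = (-1)^{|\mu|}\mathcal{F}^{-1}[D_{x}^{\mu}a](\xi)$; and that $\int_{\mathbb{R}^{n}}\mathcal{F}^{-1}[g](\xi)\,d\xi = (2\pi)^{n/2}g(0)$. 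Together these convert the $k$-th integral into $(2\pi)^{n/2}\langle A^{-1}D_{x},D_{x}\rangle^{k}a|_{x=0}$, and collecting the prefactor reproduces the asserted sum with the powers $\lambda^{-k-n/2}$.

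Finally, the remainder $R_{N}(\lambda)$ is the prefactor times $\int r_{N}(\xi,\lambda)\mathcal{F}^{-1}[a](\xi)\,d\xi$; the pointwise bound on $r_{N}$ already exhibits the factors $\lambda^{-N-n/2}$ and $1/N!$. The remaining work is to bound $\int_{\mathbb{R}^{n}}|\langle A^{-1}\xi,\xi\rangle|^{N}\,|\mathcal{F}^{-1}[a](\xi)|\,d\xi$ by the stated sum of $L^{1}$-norms of derivatives of $a$: I would insert the convergent weight $\langle\xi\rangle^{-2n}$, pull out a supremum, and note that $\langle\xi\rangle^{2n}$ is a polynomial combination of monomials $\xi^{2\beta}$ with $|\beta|\leq n$, so that $\langle\xi\rangle^{2n}\xi^{\gamma}\mathcal{F}^{-1}[a]$ is a combination of terms $\pm\mathcal{F}^{-1}[D_{x}^{\mu}a]$ bounded by $(2\pi)^{-n/2}\|\partial_{x}^{\mu}a\|_{L^{1}}$. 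Since $\langle A^{-1}\xi,\xi\rangle^{N}$ has degree $2N$, the derivatives that occur have order at most $2(N+n)$, exactly the range in the statement. I expect this last step to be the main obstacle: tracking the dependence on $A$ (the entries of $A^{-1}$ contribute a factor $|\det A|^{-N}$ that combines with the prefactor into $|\det A|^{-N-1/2}$) and verifying that the final constant $C$ is independent of $N$, $\lambda$ and $a$ requires careful but routine bookkeeping of the multinomial coefficients arising from expanding $\langle A^{-1}\xi,\xi\rangle^{N}$.
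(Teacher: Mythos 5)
Your proposal is the standard Parseval--Taylor argument, and it is essentially the derivation the paper has in mind: the paper states Proposition \ref{quadratic phase} without proof, citing H\"ormander, Duistermaat, Grigis--Sj\"ostrand and Fujiwara, and merely prefaces it with ``By Proposition \ref{the Fourier transform of e_ix2}, we can obtain\dots'' --- which is exactly your route of applying Proposition \ref{the Fourier transform of e_ix2}(ii) to $\lambda A$, passing to the Fourier side by the multiplication formula, and Taylor-expanding $e^{-i\frac{1}{2\lambda}\langle A^{-1}\xi,\xi\rangle}$ with the elementary bound $\big|e^{iy}-\sum_{k<N}(iy)^{k}/k!\big|\leq|y|^{N}/N!$. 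All the main steps check out with the paper's conventions: $\mathrm{sgn}(\lambda A)=\mathrm{sgn}A$, $\xi^{\mu}\mathcal{F}^{-1}[a]=(-1)^{|\mu|}\mathcal{F}^{-1}[D_{x}^{\mu}a]$ with the sign cancelling in even degree $2k$, $\int\mathcal{F}^{-1}[g]\,d\xi=(2\pi)^{n/2}g(0)$, and the weight $\langle\xi\rangle^{-2n}$ producing derivatives of order at most $2(N+n)$. The one place where you are over-optimistic is the final constant: expanding $\langle A^{-1}\xi,\xi\rangle^{N}$ yields a factor bounded by $\big(\sum_{i,j}|(A^{-1})_{ij}|\big)^{N}=|\det A|^{-N}\big(\sum_{i,j}|\mathrm{cof}_{ij}A|\big)^{N}$, so besides the $|\det A|^{-N}$ you isolate there remains a geometric factor in $N$ depending on the cofactors of $A$ (and on $n$) that cannot be absorbed into an $N$-independent $C$ in general; this is really an imprecision in the proposition as quoted (the cited sources state the remainder with constants $C_{N}$), not a flaw in your method, but you should not describe it as routine bookkeeping that yields the bound exactly as written.
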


\begin{exmp} 
\label{stationary example} 
If $n=1$ and $A=\pm 2$, 
since 
$\mathrm{sgn}A = \pm 1$, 
then for any $N \in \mathbb{N}$, 
\begin{align} 
&\int_{-\infty}^{\infty} e^{\pm i\lambda x^{2}} a(x) dx \notag \\ 
&= \sqrt{\pi} e^{\pm i\frac{\pi}{4}} \sum_{k=0}^{N-1} \frac{1}{k!} \left( \frac{e^{\pm i\frac{\pi}{2}}}{4} \frac{d^{2}}{dx^{2}} \right)^{k} \bigg|_{x=0} a(x) \lambda^{-k-\frac{1}{2}} + O\left( \lambda ^{-N-\frac{1}{2}} \right), 
\notag 
\end{align} 
as $\lambda \to \infty$, where double signs $\pm$ are in same order. 
\end{exmp} 

In order to treat more general cases of the phase function, 
we prepare the following two lemmas. 
The first one is the Morse lemma 
(\cite{Milnor}, \cite{Duistermaat01}, \cite{Grigis-Sjostrand}, \cite{Fujiwara1}). 

\begin{lem} 
Let $\phi$ be a real-valued function of class $C^{\infty}$ on 
a neighborhood of $\bar{x}$ in $\mathbb{R}^{n}$  
such that $\bar{x}$ is an only non-degenerate critical point of $\phi$, 
that is, 
if and only if $\nabla \phi (\bar{x})=0$, 
and $\det \mathrm{Hess} \phi (\bar{x}) \ne 0$. 
Then 
there exist neighborhoods $U$ of $\bar{x}$ and $V$ of $0$ in $\mathbb{R}^{n}$, 
and $C^{\infty}$ diffeomorphism $\varPhi : V \longrightarrow U$ 
such that $x=\varPhi (y)$ for $x=(x_{1},\dots,x_{n}) \in U$ and $y=(y_{1},\dots,y_{n}) \in V$ 
and 
\begin{align} 
\phi (x) - \phi (\bar{x}) 
= \frac{1}{2} (y_{1}^{2} + \cdots + 
y_{p}^{2} - y_{p+1}^{2} - \cdots - y_{n}^{2}), \notag 
\end{align} 
where $\mathrm{Hess} \phi (\bar{x}) 
:= (\partial ^{2} 
\phi (\bar{x})/\partial x_{i} \partial x_{j})_{i,j=1,\dots,n}$ 
is Hessian matrix of $\phi$ 
at $\bar{x}$ with ``$p$" positive and ``$n-p$" negative eigenvalues. 
\end{lem}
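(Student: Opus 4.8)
The plan is to reduce the assertion to the classical algebraic fact that a nondegenerate symmetric quadratic form can be diagonalized to $\pm 1$ entries, but to carry out that diagonalization \emph{smoothly} on a neighborhood so that the resulting substitution is a genuine $C^{\infty}$ diffeomorphism. First I would normalize the problem: by the translation $x \mapsto x - \bar{x}$ and by subtracting the constant $\phi(\bar{x})$, I may assume $\bar{x} = 0$ and $\phi(0) = 0$, so that the conclusion reads $\phi(x) = \frac{1}{2}\sum \pm y_{i}^{2}$ with the critical point at the origin.

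Next I would extract the quadratic structure by applying Hadamard's lemma (Taylor's formula with integral remainder) twice. Since $\phi(0)=0$, writing $\phi(x)=\int_{0}^{1}\frac{d}{dt}\phi(tx)\,dt$ gives $\phi(x)=\sum_{i}x_{i}g_{i}(x)$ with $g_{i}\in C^{\infty}$ and $g_{i}(0)=\partial_{x_{i}}\phi(0)=0$; applying the lemma again to each $g_{i}$ yields $\phi(x)=\sum_{i,j}x_{i}x_{j}h_{ij}(x)$ with $h_{ij}\in C^{\infty}$. Replacing $h_{ij}$ by $\frac{1}{2}(h_{ij}+h_{ji})$ I may assume $h_{ij}=h_{ji}$, and a short computation of second derivatives at the origin identifies $(h_{ij}(0))_{i,j}$ with $\frac{1}{2}\,\mathrm{Hess}\,\phi(0)$, which is nonsingular by hypothesis.

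The core is an inductive completion of squares with smooth coefficients. Suppose coordinates $u=(u_{1},\dots,u_{n})$ near $0$ have already been produced in which $\phi=\pm u_{1}^{2}\pm\cdots\pm u_{r-1}^{2}+\sum_{i,j\geq r}u_{i}u_{j}H_{ij}(u)$, with $(H_{ij})$ symmetric and $(H_{ij}(0))_{i,j\geq r}$ nonsingular. After a linear change among $u_{r},\dots,u_{n}$ I may assume $H_{rr}(0)\neq 0$, so $H_{rr}$ is nonzero on a neighborhood and $\sqrt{|H_{rr}|}$ is $C^{\infty}$ there. Setting $v_{r}:=\sqrt{|H_{rr}(u)|}\left(u_{r}+\sum_{i>r}u_{i}H_{ir}(u)/H_{rr}(u)\right)$ and keeping the remaining $u_{i}$, a direct expansion shows that $\pm v_{r}^{2}$ absorbs every term involving $u_{r}$, leaving $\phi=\pm u_{1}^{2}\pm\cdots\pm v_{r}^{2}+\sum_{i,j>r}u_{i}u_{j}\tilde{H}_{ij}(u)$ with smooth symmetric $\tilde{H}_{ij}$. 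Since the Jacobian of $(u_{1},\dots,u_{r-1},v_{r},u_{r+1},\dots,u_{n})$ with respect to $u$ is nonsingular at $0$, the inverse function theorem guarantees these are admissible coordinates; iterating from $r=1$ to $n$ produces coordinates in which $\phi=\sum \pm y_{i}^{2}$.

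Finally I would fix the number and order of the signs. Sylvester's law of inertia forces the count of plus signs to equal the number $p$ of positive eigenvalues of $\mathrm{Hess}\,\phi(0)$ (and the minus signs to equal $n-p$), independently of the reduction path; a permutation of the $y_{i}$ places the positive squares first, and a rescaling of each coordinate (absorbed into $\varPhi$) produces the factor $1/2$ in the stated normal form. The step I expect to be the main obstacle is the smoothness bookkeeping in the inductive square completion: one must check that dividing by $H_{rr}$ and taking $\sqrt{|H_{rr}|}$ keeps all coefficients $C^{\infty}$ on a common neighborhood, and that the accumulated change of variables remains an honest diffeomorphism at each stage rather than merely a formal substitution.
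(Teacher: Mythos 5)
The paper does not actually prove this lemma: it is the classical Morse lemma, stated with references to Milnor, Duistermaat, Grigis--Sj\"ostrand and Fujiwara, and your argument is precisely the standard proof found in those sources (Hadamard's lemma applied twice, smooth inductive completion of squares, then Sylvester's law of inertia and a rescaling to produce the factor $\frac{1}{2}$), so it is the ``same approach'' in the only sense available. The one step you should make explicit is that after splitting off the $r$-th square the reduced coefficient matrix $(\tilde{H}_{ij}(0))_{i,j>r}$ is again nonsingular --- it is the Schur complement of the nonzero entry $H_{rr}(0)$ inside the nonsingular block $(H_{ij}(0))_{i,j\geq r}$ --- so that the induction hypothesis is actually restored and the next step can again arrange a nonzero diagonal entry; with that one line added the proof is complete.
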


The second one is for an estimation of the remainder of an asymptotic expansion for an oscillatory integral (\cite{Hormander02}, \cite{Fujiwara1}). 
\begin{lem} 
\label{Lax's technique} 
Let $\lambda > 0$, 
$a \in \mathcal{S}(\mathbb{R}^{n})$ 
and $\phi$ a real-valued function of class $C^{\infty}$ on $\mathbb{R}^{n}$ 
with $| \nabla \phi (x) | \geq d > 0$ for $x \in \mathrm{supp} a$. 
Then 
for each $N \in \mathbb{N}$, 
there exists a positive constant $C_{N}$ such that for any $\lambda \geq 1$, 
\begin{align} 
\left| \int_{\mathbb{R}^{n}} e^{i \lambda \phi(x)} a (x) dx \right| 
\leq C_{N}(\lambda d^{2})^{-N}. \notag 
\end{align} 
\end{lem}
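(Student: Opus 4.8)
The plan is to use the standard non-stationary phase (integration by parts) argument. Since $|\nabla\phi| \geq d > 0$ on $\mathrm{supp}\, a$, the first-order operator
\[
L := \frac{1}{i\lambda\, |\nabla\phi(x)|^{2}} \sum_{j=1}^{n} \frac{\partial\phi}{\partial x_{j}}(x)\, \frac{\partial}{\partial x_{j}}
\]
is well defined there, and a one-line computation gives $L\, e^{i\lambda\phi} = e^{i\lambda\phi}$, hence $L^{N} e^{i\lambda\phi} = e^{i\lambda\phi}$ for every $N \in \mathbb{N}$. First I would insert this identity into the integral and integrate by parts $N$ times, transferring $L^{N}$ off the exponential onto the amplitude via the formal transpose ${}^{t}L u = -\frac{1}{i\lambda} \sum_{j=1}^{n} \partial_{x_{j}}\!\big( |\nabla\phi|^{-2} (\partial_{x_{j}}\phi)\, u \big)$. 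Because $a \in \mathcal{S}(\mathbb{R}^{n})$ and all coefficients are smooth on $\mathrm{supp}\,a$, there are no boundary contributions, and one obtains
\[
\int_{\mathbb{R}^{n}} e^{i\lambda\phi(x)} a(x)\, dx = \int_{\mathbb{R}^{n}} e^{i\lambda\phi(x)}\, \big( ({}^{t}L)^{N} a \big)(x)\, dx.
\]

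Next I would take absolute values. Since $|e^{i\lambda\phi}| = 1$ and the integrand is supported in $\mathrm{supp}\,a$, this gives
\[
\left| \int_{\mathbb{R}^{n}} e^{i\lambda\phi(x)} a(x)\, dx \right| \leq \int_{\mathrm{supp}\, a} \big| ({}^{t}L)^{N} a \big|\, dx.
\]
The key structural feature is that each application of ${}^{t}L$ carries exactly one factor $(i\lambda)^{-1}$, so $({}^{t}L)^{N} a$ equals $\lambda^{-N}$ times a finite sum of terms, each a product of derivatives of $a$ (up to order $N$) with smooth functions built from the $\partial_{x}^{\alpha}\phi$ and negative powers of $|\nabla\phi|^{2}$. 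Using $|\nabla\phi|^{-1} \leq d^{-1}$ on $\mathrm{supp}\,a$ to bound these negative powers, together with the rapid decay of $a$ and its derivatives to ensure integrability, the remaining integral is bounded by a constant $C_{N}$ independent of $\lambda$ and $d$. Collecting the explicit factors then yields $C_{N} \lambda^{-N} d^{-2N} = C_{N} (\lambda d^{2})^{-N}$, and the restriction $\lambda \geq 1$ is only needed to compare this power cleanly with the other error terms occurring later.

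The hard part will be the bookkeeping in the Leibniz expansion of $({}^{t}L)^{N} a$ and, in particular, extracting the clean power $d^{-2N}$: differentiating the factor $|\nabla\phi|^{-2}$ repeatedly produces terms with higher negative powers of $|\nabla\phi|$ accompanied by higher derivatives of $\phi$, so to recover exactly $d^{-2N}$ (rather than a worse power as $d \to 0$) one must track the homogeneity in $\nabla\phi$ carefully, or invoke bounds on the higher derivatives of $\phi$ on $\mathrm{supp}\,a$ and absorb them into $C_{N}$. The remaining technical point is to verify that every term in the expansion is genuinely integrable against the Schwartz decay of $a$, even when $\mathrm{supp}\,a$ is unbounded and $\phi$ together with its derivatives may grow at infinity.
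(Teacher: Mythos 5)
The paper does not prove this lemma: it is stated as a recalled preliminary and attributed to H\"ormander and Fujiwara, so there is no in-paper argument to compare against. Your proposal is exactly the standard non-stationary-phase proof used in those references --- the operator $L=\frac{1}{i\lambda|\nabla\phi|^{2}}\sum_{j}\partial_{j}\phi\,\partial_{j}$, $N$-fold integration by parts, and the bound $|\nabla\phi|^{-1}\leq d^{-1}$ on $\mathrm{supp}\,a$ --- and the two difficulties you flag are the genuine ones: extracting the homogeneity $d^{-2N}$ (each transpose application either puts a derivative on $a$ at cost $d^{-1}$ or differentiates a coefficient at cost $d^{-2}$, so the clean power really only comes out in the regime $d\leq 1$, which is the regime in which the lemma is used), and the fact that with $a\in\mathcal{S}(\mathbb{R}^{n})$ and no growth hypothesis on the derivatives of $\phi$ the constant $C_{N}$ is finite only if those derivatives are dominated by the Schwartz decay of $a$ --- the cited sources avoid this by assuming compact support or bounded derivatives of $\phi$ on $\mathrm{supp}\,a$. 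Your outline is correct and matches the intended proof; completing it honestly requires adding one of those supplementary hypotheses or restricting to $d\leq 1$.
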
 

We are now in a position to state the stationary phase method 
(\cite{Hormander01}, \cite{Hormander02}, 
\cite{Duistermaat01}, \cite{Fujiwara1}). 
\begin{thm} 
\label{th00} 
Suppose that $\lambda > 0$, $a \in \mathcal{S}(\mathbb{R}^{n})$ 
and $\phi$ is a real-valued function of class $C^{\infty}$ on a neighborhood of 
$\bar{x}$ in $\mathbb{R}^{n}$  
such that $\bar{x}$ is an only non-degenerate critical point of $\phi$. 
Then 
there exist neighborhoods $U$ of $\bar{x}$ 
and $V$ of $0$ in $\mathbb{R}^{n}$, 
and $C^{\infty}$ diffeomorphism 
$\varPhi : V \longrightarrow U$ such that $x=\varPhi (y)$ 
for $x =(x_{1},\dots,x_{n}) \in U$ and $y =(y_{1},\dots,y_{n}) \in V$, 
and for each $N \in \mathbb{N}$, 
there exists a positive constant $C_{N}$ such that for any $\lambda \geq 1$, 
\begin{align} 
&\int_{\mathbb{R}^{n}} e^{i\lambda \phi (x)} a(x) dx 
= (2\pi)^{\frac{n}{2}} \frac{e^{i \frac{\pi}{4} \mathrm{sgn} \mathrm{Hess} \phi (\bar{x})}}{|\det \mathrm{Hess} \phi (\bar{x})|^{\frac{1}{2}}} e^{i\lambda \phi(\bar{x})} \notag \\ 
&\times \sum_{k=0}^{N-1} \frac{1}{k!} \Big( -i\frac{1}{2} \langle \mathrm{Hess} \phi (\bar{x})^{-1} D_{y},D_{y} \rangle \Big)^{k} 
\Big|_{y=0} \{ (a \circ \varPhi) J_{\varPhi} \} (y) \lambda^{-k-\frac{n}{2}} + R_{N}(\lambda) \notag 
\end{align} 
and 
\begin{align} 
|R_{N}(\lambda)| \leq C_{N} \lambda^{-N-\frac{n}{2}}, \notag 
\end{align} 
where 
$J_{\varPhi}(y) := 
\det (\partial x_{j}/\partial y_{k})_{j,k=1,\dots,n}$ is a Jacobian of $\varPhi$. 
\end{thm}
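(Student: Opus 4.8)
The plan is to localize the integral around the critical point and reduce the phase to its quadratic normal form, after which the already-established quadratic case does the analytic work. First I would pick a cutoff $\psi \in C^{\infty}_{0}(\mathbb{R}^{n})$ that is identically $1$ on a small neighborhood of $\bar{x}$ whose closure lies inside the Morse chart $U$, and write $a = \psi a + (1-\psi)a$. On the support of $(1-\psi)a$ the only critical point $\bar{x}$ is excluded, so $\nabla \phi \ne 0$ there; after a further harmless cutoff making the amplitude a genuine element of $\mathcal{S}(\mathbb{R}^{n})$ with $|\nabla\phi| \geq d > 0$ on its support, Lemma \ref{Lax's technique} shows this piece contributes $O(\lambda^{-N})$ for every $N$ and is thus absorbed into the remainder $R_{N}(\lambda)$.

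For the principal piece $\psi a$ I would invoke the Morse lemma to obtain the $C^{\infty}$ diffeomorphism $\Phi : V \longrightarrow U$ with $\phi(\Phi(y)) - \phi(\bar{x}) = \tfrac{1}{2}\langle B y, y\rangle$, where $B = \mathrm{diag}(1,\dots,1,-1,\dots,-1)$ has $p$ entries $+1$ and $n-p$ entries $-1$; arranging $\Phi$ to be orientation-preserving so that $J_{\Phi} > 0$. Changing variables $x = \Phi(y)$ converts the integral into $e^{i\lambda\phi(\bar{x})}\int_{V} e^{i\frac{1}{2}\lambda\langle By,y\rangle}\{(a\circ\Phi)J_{\Phi}\}(y)\,dy$, whose amplitude (with $\psi$ absorbed and extended by zero) is smooth and compactly supported, hence lies in $\mathcal{S}(\mathbb{R}^{n})$. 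Now Proposition \ref{quadratic phase} applies verbatim with $A = B$, producing the series $\sum_{k} \tfrac{1}{k!}\bigl(-i\tfrac{1}{2}\langle B^{-1}D_{y},D_{y}\rangle\bigr)^{k}\big|_{y=0}\{(a\circ\Phi)J_{\Phi}\}(y)\,\lambda^{-k-n/2}$ together with the prefactor $(2\pi)^{n/2} e^{i\frac{\pi}{4}\mathrm{sgn}B}/|\det B|^{1/2}$ and a remainder bounded by a constant times $\lambda^{-N-n/2}$.

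The rest is bookkeeping that rewrites the normal-form constants in terms of the Hessian. Differentiating $\phi\circ\Phi$ twice at $y=0$ gives $D\Phi(0)^{\top}\,\mathrm{Hess}\,\phi(\bar{x})\,D\Phi(0) = B$, whence $\mathrm{sgn}B = \mathrm{sgn}\,\mathrm{Hess}\,\phi(\bar{x})$, $|\det B| = 1$, and $J_{\Phi}(0) = |\det\mathrm{Hess}\,\phi(\bar{x})|^{-1/2}$; these identities recover the factor $e^{i\frac{\pi}{4}\mathrm{sgn}\,\mathrm{Hess}\,\phi(\bar{x})}/|\det\mathrm{Hess}\,\phi(\bar{x})|^{1/2}$ and let the operator be written through $\mathrm{Hess}\,\phi(\bar{x})^{-1}$. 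Finally I would add the two remainder contributions and use the explicit estimate of Proposition \ref{quadratic phase} to reach $|R_{N}(\lambda)| \leq C_{N}\lambda^{-N-n/2}$. I expect the main obstacle to be the localization step itself: Proposition \ref{quadratic phase} is a global statement on $\mathcal{S}(\mathbb{R}^{n})$, whereas $\Phi$ is only defined on the chart $V$, so I must check carefully that cutting off, pulling back, and extending by zero neither alters the $y$-derivatives at $y=0$ (which fix every coefficient) nor degrades the uniformity of the remainder bound, and that the non-stationary part is controlled by Lemma \ref{Lax's technique} with the correct Schwartz seminorms; this matching is the technical heart of the argument.
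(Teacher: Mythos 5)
Your overall strategy is exactly the intended one: the paper states Theorem \ref{th00} without proof (it is recalled from H\"ormander, Duistermaat and Fujiwara), but it assembles precisely the three tools you invoke --- the Morse lemma, Lemma \ref{Lax's technique} for the non-stationary part, and Proposition \ref{quadratic phase} for the quadratic normal form --- so the decomposition $a = \psi a + (1-\psi)a$, the $O(\lambda^{-N})$ bound away from $\bar{x}$, and the change of variables $x = \varPhi(y)$ followed by Proposition \ref{quadratic phase} with $A = B = \mathrm{diag}(1,\dots,1,-1,\dots,-1)$ is the right skeleton. One caveat on the localization you yourself flag: Lemma \ref{Lax's technique} needs $|\nabla\phi| \geq d > 0$ on the support of the amplitude, and this is only guaranteed because $\phi$ is defined, with its unique critical point, on a neighborhood of $\bar{x}$, so the amplitude is implicitly supported where $\inf |\nabla\phi| > 0$ off any neighborhood of $\bar{x}$; if $\mathrm{supp}\, a$ were genuinely all of $\mathbb{R}^{n}$ this step would require an additional hypothesis, since a Schwartz tail gives no lower bound on $|\nabla\phi|$ at infinity.

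The concrete gap is in your final bookkeeping paragraph. Proposition \ref{quadratic phase} applied with $A = B$ produces the prefactor $(2\pi)^{n/2} e^{i\frac{\pi}{4}\mathrm{sgn}B}/|\det B|^{1/2} = (2\pi)^{n/2} e^{i\frac{\pi}{4}\mathrm{sgn}\,\mathrm{Hess}\,\phi(\bar{x})}$, because $|\det B| = 1$; the quantity $|\det \mathrm{Hess}\,\phi(\bar{x})|^{-1/2}$ does not appear as a global prefactor but only inside the $k=0$ coefficient as the value $J_{\varPhi}(0)$. Since the $k \geq 1$ coefficients involve higher derivatives of $(a\circ\varPhi)J_{\varPhi}$ at $y=0$, not merely its value, you cannot pull $J_{\varPhi}(0) = |\det \mathrm{Hess}\,\phi(\bar{x})|^{-1/2}$ out of the whole series while also keeping $J_{\varPhi}$ inside the amplitude --- comparing the $k=0$ terms shows this double-counts that factor. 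Likewise, the relation $D\varPhi(0)^{\top}\,\mathrm{Hess}\,\phi(\bar{x})\,D\varPhi(0) = B$ makes $\mathrm{Hess}\,\phi(\bar{x})^{-1}$ and $B^{-1}=B$ congruent, not equal, so $\big\langle B^{-1}D_{y},D_{y}\big\rangle$ cannot simply be rewritten as $\big\langle \mathrm{Hess}\,\phi(\bar{x})^{-1}D_{y},D_{y}\big\rangle$ acting on the same function of $y$. What your argument actually establishes is the expansion with prefactor $(2\pi)^{n/2}e^{i\frac{\pi}{4}\mathrm{sgn}\,\mathrm{Hess}\,\phi(\bar{x})}$ and operator $\big\langle BD_{y},D_{y}\big\rangle$, which is the classical formula; the passage from that to the displayed formula is asserted in your last step but not proved, and as the displayed normalization stands it does not follow from the identities you list.
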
 

\section{Existence of Oscillatory Integrals} 

In this section, 
we shall show existence of oscillatory integrals used in later sections. 
First we define the class of amplitude functions as follows 
(cf. \cite{Kumano-go} p.46.): 
\begin{defn} 
\label{A_tau_delta} 
Assume that $p>0$. 
Let $\tau \in \mathbb{R}$ and $-1 \leq \delta < p-1$. 
We say that $a \in C^{\infty}(\mathbb{R})$ belongs to  the class $\mathcal{A}^{\tau}_{\delta}(\mathbb{R})$ if 
for each $k \in \mathbb{Z}_{\geq 0}$, there exists a positive constant $C_{k}$ such that for any $x \in \mathbb{R}$ 
\begin{align} 
|a^{(k)}(x)| \leq C_{k} \langle x \rangle^{\tau + \delta k}. 
\notag  
\end{align} 
Then for any $l \in \mathbb{Z}_{\geq 0}$, we set 
\begin{align} 
|a|^{(\tau)}_{l} 
:= \max_{k=0,\dots,l} \sup_{x \in \mathbb{R}} \langle x \rangle^{-\tau - \delta k} |a^{(k)}(x)|. 
\notag 
\end{align} 
And then 
\begin{align} 
|a^{(k)}(x)| \leq |a|^{(\tau)}_{l} \langle x \rangle^{\tau + \delta k}. 
\label{A_tau_delta_def02} 
\end{align} 
\end{defn} 

\begin{rem} \label{rem01} 
We see the following immediately: 
\begin{align} 
\mathcal{S}(\mathbb{R}) 
= \bigcap_{\tau \leq 0} \mathcal{A}^{\tau}_{0}(\mathbb{R}), 
\notag 
\end{align} 
and if $a \in \mathcal{A}^{\tau}_{\delta}(\mathbb{R})$, then for any $j \in \mathbb{Z}_{\geq 0}$, $a^{(j)} \in \mathcal{A}^{\tau + \delta j}_{\delta}(\mathbb{R})$ and for any $l \in \mathbb{Z}_{\geq 0}$, 
\begin{align} 
|a^{(j)}|^{(\tau + \delta j)}_{l} 
&= \max_{k=0,\dots,l} \sup_{x \in \mathbb{R}} \langle x \rangle^{-(\tau + \delta j) - \delta k} |a^{(j+k)}(x)| \notag \\ 
&= \max_{k'=j,\dots,j+l} \sup_{x \in \mathbb{R}} \langle x \rangle^{-\tau - \delta k'} |a^{(k')}(x)| 
\leq |a|^{(\tau)}_{j+l}. 
\label{A_tau_delta_def03} 
\end{align} 
\end{rem} 

Then we have following lemma: 
\begin{lem} 
\label{L_star_l} 
Assume that $\lambda > 0$, $p > 0$ and $q > 0$. 
Let $a \in \mathcal{A}^{\tau}_{\delta}(\mathbb{R})$, 
$\varphi \in C^{\infty}_{0}(\mathbb{R})$ a cutoff function such that $\varphi \equiv 1$ on $|x| \leq 1$ and $\varphi \equiv 0$ on $|x| \geq r > 1$, 
$\psi_{h} := 1 - \delta_{h1} \varphi$ for $h=0,1$, 
and $a_{h} := a \psi_{h}$, 
and let $\chi \in \mathcal{S}(\mathbb{R})$ with $\chi(0) = 1$, $0 < \varepsilon < 1$ and $\chi_{\varepsilon}(x) := \chi(\varepsilon x)$ for $x \in \mathbb{R}$, 
and let $L^{*} := - \frac{1}{i\lambda} \frac{d}{dx}  \frac{1}{px^{p-1}}$ be the formal adjoint operator of $L := \frac{1}{px^{p-1}} \frac{1}{i\lambda} \frac{d}{dx}$, and $l_{0}:=[q/p)$.
Then for any $k \in \mathbb{Z}_{\geq 0}$, the following hold: 

\begin{enumerate} 
\item[(i)] 
For each $l \in \mathbb{Z}_{\geq 0}$, there exist real constants $C_{l,j}$ for $j=0,\dots,l$ 
such that for any $x \in (0,\infty)$ and $h=0,1$, 
\begin{align} 
L^{*l} (x^{q-1} a_{h}(x) \chi_{\varepsilon}^{(k)}(x)) 
= \left( \frac{i}{\lambda p} \right)^{l} \sum_{j=0}^{l} C_{l,j} x^{q-1-pl+j} (a_{h}(x) \chi_{\varepsilon}^{(k)}(x))^{(j)}, 
\label{L star_l_01} 
\end{align} 
where $L^{*0}$ is an identity operator, and $C_{l,j} = (q-pl+j) C_{l-1,j} + C_{l-1,j-1}$ for $1 \leq j \leq l-1$, $C_{l,0} = (q-pl) C_{l-1,0}$ and $C_{l,l} = C_{l-1,l-1}$ for $l \in \mathbb{N}$, and $C_{0,0} = 1$. 
Then $C_{l,0} = \prod_{s=1}^{l} (q-ps)$ for $l \in \mathbb{N}$ and $C_{l,l} = 1$ for $l \in \mathbb{Z}_{\geq 0}$. 
Note $q-1-pl+j = (q-1)-(p-1)l-(l-j)$. 
\item[(ii)] 
If $q>p$ and $h=0$, then for any $l=0,\dots,l_{0}$, or if $p > 0$, $q > 0$ and $h = 1$, then for any $l \in \mathbb{Z}_{\geq 0}$, 
the following improper integrals are absolutely convergent:
\begin{align} 
\int_{0}^{\infty} e^{\pm i\lambda x^{p}} (\pm L^{*})^{l} (x^{q-1} a_{h}(x) \chi_{\varepsilon}^{(k)}(x)) dx. 
\notag 
\end{align} 
\item[(iii)] 
If $q>p$ and $h=0$, then for any $l=1,\dots,l_{0}$, or if $p > 0$, $q > 0$ and $h = 1$, then for any $l \in \mathbb{N}$, as $x \to +0$ or $x \to \infty$, 
\begin{align} 
\big| e^{\pm i\lambda x^{p}} (\pm i\lambda px^{p-1})^{-1} (\pm L^{*})^{l-1} (x^{q-1} a_{h}(x) \chi_{\varepsilon}^{(k)}(x)) \big| \to 0. 
\notag 
\end{align} 
\item[(iv)] 
If $q>p$ and $h=0$, then for any $l=1,\dots,l_{0}$, or if $p > 0$, $q > 0$ and $h = 1$, then for any $l \in \mathbb{N}$, 
\begin{align} 
\int_{0}^{\infty} e^{\pm i\lambda x^{p}} x^{q-1} a_{h}(x) \chi_{\varepsilon}^{(k)}(x) dx 
= \int_{0}^{\infty} e^{\pm i\lambda x^{p}} (\pm L^{*})^{l} (x^{q-1} a_{h}(x) \chi_{\varepsilon}^{(k)}(x)) dx, 
\label{I_pq_pm_a_lambda} 
\end{align} 
\end{enumerate} 
where double signs $\pm$ are in same order. 
\end{lem}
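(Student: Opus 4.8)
The plan is to prove the four parts in order, since each relies on the previous ones, and the central device throughout is the operator identity $L^* \bigl(e^{\pm i\lambda x^p}\bigr)$-type integration by parts encapsulated in the definition $L = \frac{1}{px^{p-1}}\frac{1}{i\lambda}\frac{d}{dx}$, which satisfies $L\bigl(e^{\pm i\lambda x^p}\bigr) = \pm e^{\pm i\lambda x^p}$ on $(0,\infty)$. For part (i), the goal is purely algebraic: I would establish the stated formula for $L^{*l}$ by induction on $l$. The base case $l=0$ is the identity operator, giving $C_{0,0}=1$. For the inductive step, I would apply one more copy of $L^* = -\frac{1}{i\lambda}\frac{d}{dx}\frac{1}{px^{p-1}}$ to the claimed expression for $L^{*l}$, differentiate the product $x^{q-1-pl+j}(a_h\chi_\varepsilon^{(k)})^{(j)}$ using the Leibniz/power rule, and collect terms according to the new power $x^{q-1-p(l+1)+j'}$ and the derivative order $j'$ of $a_h\chi_\varepsilon^{(k)}$. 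Matching coefficients yields exactly the recursion $C_{l,j}=(q-pl+j)C_{l-1,j}+C_{l-1,j-1}$, with the boundary cases $C_{l,0}=(q-pl)C_{l-1,0}$ and $C_{l,l}=C_{l-1,l-1}$; solving these telescoping recursions gives the closed forms $C_{l,0}=\prod_{s=1}^{l}(q-ps)$ and $C_{l,l}=1$.

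For part (ii), the task is to verify absolute convergence of the integrals obtained by inserting the formula from (i), so it suffices to bound each summand $\bigl|x^{q-1-pl+j}(a_h\chi_\varepsilon^{(k)})^{(j)}(x)\bigr|$ and check integrability near $0$ and near $\infty$ separately. Near $x=\infty$, I would use that $a\in\mathcal{A}^\tau_\delta(\mathbb{R})$ combined with Proposition \ref{chi epsilon}(ii), which gives $|\partial_x^\alpha(\chi(\varepsilon x))|\leq C_\alpha\langle x\rangle^{-|\alpha|}$ uniformly in $\varepsilon$; the exponent $\delta<p-1$ is precisely what forces the power in $x$ at infinity to decay once $l$ is large, which is why for $h=1$ (where $\psi_1=1-\varphi$ removes a neighborhood of the origin) every $l\in\mathbb{Z}_{\geq 0}$ works. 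Near $x=+0$, the rapid oscillation is irrelevant and the only issue is the power $x^{q-1-pl+j}$; for $h=0$ the factor $a_0=a$ is harmless near $0$, and the constraint $l\leq l_0=[q/p)$ guarantees $q-pl>0$, hence $q-1-pl+j>-1$ for all $j\geq 0$, keeping the integrand integrable at the origin. For $h=1$ the cutoff $\psi_1$ vanishes near $0$, so integrability there is automatic and no restriction on $l$ is needed.

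For part (iii), I would show the boundary terms vanish in the limits $x\to+0$ and $x\to\infty$. The quantity in question is, up to the factor $(\pm i\lambda p x^{p-1})^{-1}$, the integrand from the $(l-1)$-st application of $L^*$; since $(\pm i\lambda px^{p-1})^{-1}(\pm L^*)^{l-1}(\cdots)$ is, by the structure in (i), a sum of terms of the form (constant)$\times x^{q-1-p(l-1)+j - (p-1)}=x^{q-pl+j}$ times a derivative of $a_h\chi_\varepsilon^{(k)}$, the same power-counting as in (ii) applies but now with a strictly positive net exponent at the origin (because $l-1<l_0$, or equivalently $q-pl+j>0$ under the hypotheses) and a decaying exponent at infinity, so both limits are zero. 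The modulus kills the unimodular oscillatory factor $e^{\pm i\lambda x^p}$, making this a routine estimate.

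Finally, part (iv) is the integration-by-parts identity, proved by induction on $l$ using (ii) and (iii). For $l=1$, I would write $e^{\pm i\lambda x^p} = \pm L\bigl(e^{\pm i\lambda x^p}\bigr)$ on $(0,\infty)$, substitute into the left-hand integral, and integrate by parts to transfer $\pm L$ onto the amplitude as $\pm L^*$; part (iii) with $l=1$ guarantees the boundary terms at $0$ and $\infty$ vanish, and part (ii) guarantees both the original and the transformed integrals converge absolutely so the manipulation is legitimate. Iterating this step $l$ times yields the stated identity $\int_0^\infty e^{\pm i\lambda x^p}x^{q-1}a_h\chi_\varepsilon^{(k)}\,dx = \int_0^\infty e^{\pm i\lambda x^p}(\pm L^*)^l(x^{q-1}a_h\chi_\varepsilon^{(k)})\,dx$. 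I expect the main obstacle to be the careful bookkeeping in part (ii): one must track the exponent $q-1-pl+j$ against both endpoints simultaneously and verify that the condition $l\leq l_0$ for $h=0$ (versus no bound for $h=1$) is exactly what the hypotheses $-1\leq\delta<p-1$ and $q>0$ deliver, using Proposition \ref{chi epsilon}(ii) to handle the cutoff uniformly in $\varepsilon$. The remark that $q-1-pl+j=(q-1)-(p-1)l-(l-j)$ is the key organizing identity, since it separates the decay rate $(p-1)l$ from the derivative deficit $(l-j)$ and makes the role of $\delta<p-1$ transparent.
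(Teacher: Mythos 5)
Your overall architecture matches the paper's proof exactly: induction on $l$ for (i), endpoint power-counting for (ii), vanishing of the boundary term for (iii), and iterated integration by parts for (iv), with $L(e^{\pm i\lambda x^{p}})=\pm e^{\pm i\lambda x^{p}}$ driving the induction. However, there is a genuine gap in how you justify absolute convergence as $x\to\infty$ in part (ii), and it propagates to (iii). You propose to control $\chi_{\varepsilon}^{(k+u)}$ by Proposition \ref{chi epsilon} (ii), i.e.\ by the $\varepsilon$-uniform bound $|\partial_{x}^{\alpha}(\chi(\varepsilon x))|\le C_{\alpha}\langle x\rangle^{-|\alpha|}$. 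That bound decays only to the fixed order $|\alpha|$ and is not sufficient: part (ii) asserts convergence for \emph{every} $l$ in the stated range, including $l=0$. For $l=0$, $k=0$, $h=1$ the integrand is $x^{q-1}a(x)\psi_{1}(x)\chi(\varepsilon x)$, which under your estimates is only $O(x^{q-1+\tau})$ at infinity and need not be integrable when $q+\tau\ge 0$. Your remark that ``$\delta<p-1$ forces the power to decay once $l$ is large'' cannot repair this and in fact contradicts your own conclusion that every $l\ge 0$ works when $h=1$.

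What the paper actually uses at infinity is that $\varepsilon>0$ is \emph{fixed} throughout this lemma, so $\chi_{\varepsilon}$ is itself a Schwartz function: $|\chi_{\varepsilon}^{(k+u)}(x)|\le \tilde C_{k+u}\,\varepsilon^{-m}\langle x\rangle^{-m}$ for every $m\in\mathbb{N}$, with a constant that blows up as $\varepsilon\to+0$ but is harmless here. Choosing $m=[(q+\tau)^{+}]+2$ makes each summand $O(\langle x\rangle^{\beta})$ with $\beta<-1$ for every $l\ge 0$, which is what delivers convergence at infinity in (ii) and the decay to $0$ at infinity in (iii). Relatedly, you misattribute the role of $\delta<p-1$: in this lemma only $\delta\ge -1$ is used, and the upper bound on $\delta$ becomes essential only later, in Theorem \ref{Lax02}, where one needs bounds independent of $\varepsilon$ (via taking $l\ge l_{p,q}$) in order to apply dominated convergence as $\varepsilon\to+0$. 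The remainder of your outline --- part (i), the treatment of the origin in (ii) and (iii) via $q-pl_{0}>0$, and the induction in (iv) --- coincides with the paper's argument and is correct.
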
 

\begin{proof} 
Since the lower side of double signs $\pm$ can be obtained as the conjugate of the upper one, 
we shall show the upper one. 

(i) 
By induction on $l \in \mathbb{Z}_{\geq 0}$. 
If $l=0$, then we have 
\begin{align} 
L^{*0} (x^{q-1} a_{h}(x) \chi_{\varepsilon}^{(k)}(x)) 
&= x^{q-1} a_{h}(x) \chi_{\varepsilon}^{(k)}(x) \notag \\ 
&= \left( \frac{i}{\lambda p} \right)^{0} \sum_{j=0}^{0} C_{0,j} x^{q-1-p \cdot 0+j} (a_{h}(x) \chi_{\varepsilon}^{(k)}(x))^{(j)} 
\notag 
\end{align} 
for $x \in (0,\infty)$ and $h=0,1$, where $C_{0,0} = 1$. Thus \eqref{L star_l_01} holds for $l=0$.

Next if \eqref{L star_l_01} holds for $l-1$ with $l \geq 1$, 
then we have 
\begin{align} 
&L^{*l} (x^{q-1} a_{h}(x) \chi_{\varepsilon}^{(k)}(x)) 
= L^{*} L^{*l-1} (x^{q-1} a_{h}(x) \chi_{\varepsilon}^{(k)}(x)) \notag \\ 
&= - \frac{1}{i\lambda} \frac{d}{dx} \frac{1}{px^{p-1}} \left( \frac{i}{\lambda p} \right)^{l-1} \sum_{j=0}^{l-1} C_{l-1,j} x^{q-1-p(l-1)+j} (a_{h}(x) \chi_{\varepsilon}^{(k)}(x))^{(j)} \notag \\ 
&= \left( \frac{i}{\lambda p} \right)^{l} \sum_{j=0}^{l-1} C_{l-1,j} \frac{d}{dx} \left\{ x^{q-pl+j} (a_{h}(x) \chi_{\varepsilon}^{(k)}(x))^{(j)} \right\} 
= \left( \frac{i}{\lambda p} \right)^{l} \sum_{j=0}^{l-1} C_{l-1,j} \notag \\ 
&\hspace{0.75cm}\times \left\{ (q-pl+j) x^{q-1-pl+j} (a_{h}(x) \chi_{\varepsilon}^{(k)}(x))^{(j)} + x^{q-pl+j} (a_{h}(x) \chi_{\varepsilon}^{(k)}(x))^{(j+1)} \right\} \notag \\ 
&= \left( \frac{i}{\lambda p} \right)^{l} \sum_{j=0}^{l} C_{l,j} x^{q-1-pl+j} (a_{h}(x) \chi_{\varepsilon}^{(k)}(x))^{(j)} \notag 
\end{align} 
for $x \in (0,\infty)$ and $h=0,1$, 
where $C_{l,j} = (q-pl+j) C_{l-1,j} + C_{l-1,j-1}$ for $1 \leq j \leq l-1$, $C_{l,0} = (q-pl) C_{l-1,0}$ and $C_{l,l} = C_{l-1,l-1}$ for $l \in \mathbb{N}$. 
Thus \eqref{L star_l_01} holds for $l$. 
And then $C_{l,0} = \prod_{s=1}^{l} (q-ps)$ for $l \in \mathbb{N}$ and $C_{l,l} = 1$ for $l \in \mathbb{Z}_{\geq 0}$. 

(ii) 
Put $f_{h,l}(x) = e^{i\lambda x^{p}} L^{*l} (x^{q-1} a_{h}(x) \chi_{\varepsilon}^{(k)}(x))$ for $h=0,1$ and $l \in \mathbb{Z}_{\geq 0}$. 
Then since $a_{h}(x) := a(x) \psi_{h}(x)$, by \eqref{L star_l_01} and Leibniz's formula, 
\begin{align} 
f_{h,l}(x) 
= e^{i\lambda x^{p}} \left( \frac{i}{\lambda p} \right)^{l} \sum_{j=0}^{l} C_{l,j} x^{q-1-pl+j} \sum_{s+t+u=j} \frac{j!}{s!t!u!} a^{(s)}(x) \psi_{h}^{(t)}(x) \chi_{\varepsilon}^{(k+u)}(x). 
\label{f_m(x)} 
\end{align} 
Since $f_{h,l}$ is continuous on $(0,\infty)$, 
then $f_{h,l}$ is integrable on $[u,1]$ for any $u \in (0,1]$, and integrable on $[1,v]$ for any $v \in [1,\infty)$. 

By \eqref{A_tau_delta_def02}, for any $x \in (0,\infty)$, 
\begin{align} 
|f_{h,l}(x)| 
&= |L^{*l} (x^{q-1} a_{h}(x) \chi_{\varepsilon}^{(k)}(x))| \notag \\ 
&\leq (\lambda p)^{-l} \sum_{j=0}^{l} |C_{l,j}| |x|^{q-1-pl+j} \sum_{s+t+u=j} \frac{j!}{s!t!u!} |a^{(s)}(x)| |\psi_{h}^{(t)}(x)| |\chi_{\varepsilon}^{(k+u)}(x)| \notag \\  
&\leq (\lambda p)^{-l} \sum_{j=0}^{l} |C_{l,j}| |x|^{q-1-pl+j} \notag \\ 
&\hspace{0.5cm}\times \sum_{s+t+u=j} \frac{j!}{s!t!u!} |a|^{(\tau)}_{l} \langle x \rangle^{\tau + \delta s} \langle x \rangle^{-t} \langle x \rangle^{t} |\psi_{h}^{(t)}(x)| |\chi_{\varepsilon}^{(k+u)}(x)|. 
\label{|f_{h,l}(x)|02} 
\end{align} 
Here since $\psi_{h} \in C^{\infty}(\mathbb{R})$ such that 
$\psi_{h}^{(t)} \equiv \delta_{0t}$ on $|x| \geq r > 1$, 
\begin{align} 
\langle x \rangle^{t} |\psi_{h}^{(t)}(x)| 
&\leq 
\begin{cases} 
\displaystyle 
\max_{t=0,\dots,l} \sup_{|x|< r} \langle x \rangle^{t} |\psi_{h}^{(t)}(x)|^{\delta_{1h}} & \text{for $|x| < r$}, \\ 
1 & \text{for $|x|\geq r$} 
\end{cases} 
\notag \\ 
&\leq \max_{t=0,\dots,l} \sup_{|x|< r} \langle x \rangle^{t} |\psi_{h}^{(t)}(x)|^{\delta_{1h}}+1 
=: |\psi_{h}|^{(l)}_{r}. 
\label{|psi_{h}|_{l,2}} 
\end{align} 
Hence by \eqref{|f_{h,l}(x)|02} and \eqref{|psi_{h}|_{l,2}}, since $-1 \leq \delta$, 
\begin{align} 
|f_{h,l}(x)| 
&= |L^{*l} (x^{q-1} a_{h}(x) \chi_{\varepsilon}^{(k)}(x))| 
\leq (\lambda p)^{-l} \sum_{j=0}^{l} |C_{l,j}| |x|^{q-1-pl+j} \notag \\ 
&\hspace{0.5cm}\times \sum_{s+t+u=j} \frac{j!}{s!t!u!} |a|^{(\tau)}_{l} \langle x \rangle^{\tau + \delta (s+t)} |\psi_{h}|^{(l)}_{r} |\chi_{\varepsilon}^{(k+u)}(x)|. 
\label{|f_{h,l}(x)|} 
\end{align} 
Here for any $|x| \geq 1$, 
\begin{align} 
&|x| = (|x|^{2})^{1/2} \leq (1+|x|^{2})^{1/2} 
\leq (|x|^{2}+|x|^{2})^{1/2} = 2^{1/2} |x|. 
\notag 
\end{align} 
Then since $|x| \leq \langle x \rangle \leq 2^{1/2} \langle x \rangle$ and $2^{-1/2} \langle x \rangle \leq |x|$, 
for any $\tau \in \mathbb{R}$, 
\begin{align} 
|x|^{\tau} \leq 2^{|\tau|/2} \langle x \rangle^{\tau}. 
\label{|x|_angle_x} 
\end{align} 
And since $\chi \in \mathcal{S}(\mathbb{R})$, for each $k \in \mathbb{Z}_{\geq 0}$ and for each $u \in \mathbb{Z}_{\geq 0}$, there exists a positive constant $\tilde{C}_{k+u}$ such that for any $0 < \varepsilon < 1$, for any $m \in \mathbb{N}$ and for any $x \in \mathbb{R}$, 
\begin{align} 
|\chi_{\varepsilon}^{(k+u)}(x)| 
&= |\partial_{x}^{k+u} (\chi (\varepsilon x))| 
= |\partial_{y}^{k+u} \chi (\varepsilon x) \varepsilon^{k+u}| 
\leq |\partial_{y}^{k+u} \chi (\varepsilon x)| \notag \\ 
&\leq \tilde{C}_{k+u} \langle \varepsilon x \rangle^{-m} 
= \tilde{C}_{k+u} (1+|\varepsilon x|^{2})^{-m/2} 
= \tilde{C}_{k+u} \varepsilon^{-m} (\varepsilon^{-2}+|x|^{2})^{-m/2} \notag \\ 
&\leq \tilde{C}_{k+u} \varepsilon^{-m} (1+|x|^{2})^{-m/2} 
= \tilde{C}_{k+u} \varepsilon^{-m} \langle x \rangle^{-m}. 
\label{chi_est00} 
\end{align} 
Hence by \eqref{|f_{h,l}(x)|}, \eqref{|x|_angle_x} and \eqref{chi_est00}, since $-1 \leq \delta$, for any $|x| \geq 1$, 
\begin{align} 
|f_{h,l}(x)| 
&\leq (\lambda p)^{-l} \sum_{j=0}^{l} |C_{l,j}| 2^{|q-1-pl+j|/2} \langle x \rangle^{q-1-pl+j} \notag \\ 
&\hspace{0.5cm}\times \sum_{s+t+u=j} \frac{j!}{s!t!u!} |a|^{(\tau)}_{l} \langle x \rangle^{\tau + \delta (s+t)} \langle x \rangle^{-u} \langle x \rangle^{u} |\psi_{h}|^{(l)}_{r} \tilde{C}_{k+u} \varepsilon^{-m} \langle x \rangle^{-m} \notag \\ 
&\leq (\lambda p)^{-l} \sum_{j=0}^{l} \max_{j=0,\dots,l} |C_{l,j}| 2^{|q-1-pl+j|/2} \langle x \rangle^{q-1-pl+l} \notag \\ 
&\hspace{0.5cm}\times \sum_{s+t+u=j} \frac{j!}{s!t!u!} |a|^{(\tau)}_{l} \langle x \rangle^{\tau + \delta l} \langle x \rangle^{u} |\psi_{h}|^{(l)}_{r} \max_{u=0,\dots,l} \tilde{C}_{k+u} \varepsilon^{-m} \langle x \rangle^{-m} \notag \\ 
&\leq (\lambda p)^{-l} \max_{j=0,\dots,l} |C_{l,j}| 2^{|q-1-pl+j|/2} \sum_{j=0}^{l} 3^{j} |a|^{(\tau)}_{l} |\psi_{h}|^{(l)}_{r} \max_{u=0,\dots,l} \tilde{C}_{k+u} \varepsilon^{-m} \notag \\ 
&\hspace{0.5cm}\times \langle x \rangle^{q-1+\tau-(p-1-\delta)l + l -m} \notag \\ 
&= C^{(k)}_{l,\varepsilon} \lambda^{-l} |a|^{(\tau)}_{l} \langle x \rangle^{q-1+\tau-(p-1 - \delta)l +l-m} \notag \\ 
&\leq C^{(k)}_{l,\varepsilon} \lambda^{-l} |a|^{(\tau)}_{l} \langle x \rangle^{(q+\tau)^{+}-m}, 
\notag 
\end{align} 
where 
\begin{align} 
C^{(k)}_{l,\varepsilon} 
:&= p^{-l} \max_{j=0,\dots,l} |C_{l,j}| 2^{|q-1-pl+j|/2} \cdot \frac{3^{l+1}-1}{2} |\psi_{h}|^{(l)}_{r} \max_{u=0,\dots,l} \tilde{C}_{k+u} \varepsilon^{-m}. 
\notag 
\end{align} 
Hence 
\begin{align} 
f_{h,l}(x) 
= O(x^{\beta})~(x \to \infty) 
\label{f_l(x)_order} 
\end{align} 
with $\beta = (q+ \tau)^{+} -m$ for any $m \in \mathbb{Z}_{\geq 0}$. 
Here let $m = [(q+ \tau)^{+}]+2$. 
Since $x-[x]-1<0$ for $x \in \mathbb{R}$, then $\beta <-1$. 
Thus $\int_{1}^{\infty} f_{h,l}(x) dx$ is absolutely convergent. 

When $q>p$ and $h=0$, then $a_{h} = a$, and let $l_{0}:= [q/p)$. 
Since $(q/p)-1 \leq l_{0} < q/p$, then $0 < q-pl_{0} \leq p$. 
Hence by \eqref{f_m(x)}, for any $l=0,\dots,l_{0}$, 
\begin{align} 
f_{h,l}(x) 
= \sum_{j=0}^{l} O(x^{q-1-pl+j}) 
= O(x^{q-1-pl}) 
= O(x^{\alpha}) ~(x \to +0) 
\label{f_m_orger_alpha} 
\end{align} 
with $\alpha = q-1-pl \geq q-1-pl_{0} > -1$. 

When $p > 0$, $q > 0$ and $h = 1$, since $a_{1} = a(1 - \varphi)$, then $f_{1,l} \equiv 0$ on $(0,1]$ for $l \in \mathbb{Z}_{\geq 0}$. 
Hence $\int_{0}^{1} f_{h,l}(x) dx$ is absolutely convergent in either case. 

Therefore the following improper integral is absolutely convergent: 
\begin{align} 
\int_{0}^{\infty} f_{h,l}(x) dx 
= \int_{0}^{1} f_{h,l}(x) dx + \int_{1}^{\infty} f_{h,l}(x) dx. 
\notag 
\end{align} 

(iii) 
Put $g_{h,l-1}(x) = (i\lambda px^{p-1})^{-1} f_{h,l-1}(x)$ for for $h=0,1$ and $l \in \mathbb{N}$. 
Then by \eqref{f_l(x)_order}, 
\begin{align} 
g_{h,l-1}(x) 
&= O(x^{-(p-1)}) O(x^{(q+ \tau)^{+} -m}) 
= O(x^{\beta'})~(x \to \infty) 
\notag 
\end{align} 
with $\beta' = (q+\tau)^{+} -m$ for any $m \in \mathbb{Z}_{\geq 0}$. 
Here let $m = [(q+\tau)^{+}]+1$. Since $x-[x]-1<0$ for $x \in \mathbb{R}$, then $\beta' <0$. 

When $q>p$ and $h=0$, by \eqref{f_m_orger_alpha}, for any $l=1,\dots,l_{0}$, 
\begin{align} 
g_{h,l-1}(x) 
= O(x^{-(p-1)}) O(x^{q-1-p(l-1)}) 
= O(x^{\alpha'}) ~(x \to +0) 
\notag 
\end{align} 
with $\alpha' = q-pl \geq q-pl_{0} > 0$. 

When $p > 0$, $q > 0$ and $h = 1$, by (ii), since $f_{l-1} \equiv 0$, then $g_{l-1} \equiv 0$ on $(0,1]$ for $l \in \mathbb{N}$. 
Therefore, as $x \to \infty$ or $x \to +0$, 
\begin{align} 
|g_{h,l-1}(x)| \to 0. 
\notag 
\end{align} 

(iv) 
By induction on $l \in \mathbb{N}$. 
If $l=1$, then since $L(e^{i\lambda x^{p}}) = e^{i\lambda x^{p}}$ when $x \ne 0$, 
by integration by parts, (ii) and (iii), 
\begin{align} 
&\int_{0}^{\infty} e^{i\lambda x^{p}} x^{q-1} a_{h}(x) \chi_{\varepsilon}^{(k)}(x) dx 
= \lim_{\substack{u \to +0\\v \to \infty}} \int_{u}^{v} \frac{1}{px^{p-1}} \frac{1}{i\lambda} \frac{d}{dx}(e^{i\lambda x^{p}}) x^{q-1} a_{h}(x) \chi_{\varepsilon}^{(k)}(x) dx \notag \\ 
&= \lim_{\substack{u \to +0\\v \to \infty}} \bigg\{ \Big[ e^{i\lambda x^{p}} (i\lambda p x^{p-1})^{-1} x^{q-1} a_{h}(x) \chi_{\varepsilon}^{(k)}(x) \Big]_{u}^{v} \bigg. \notag \\ 
&\hspace{0.65cm}\bigg. + \int_{u}^{v} e^{i\lambda x^{p}} L^{*} (x^{q-1} a_{h}(x) \chi_{\varepsilon}^{(k)}(x)) dx \bigg\} 
= \int_{0}^{\infty} e^{i\lambda x^{p}} L^{*} (x^{q-1} a_{h}(x) \chi_{\varepsilon}^{(k)}(x)) dx. 
\notag 
\end{align} 
Thus \eqref{I_pq_pm_a_lambda} holds for $l=1$. 
Next if \eqref{I_pq_pm_a_lambda} holds for $l-1$ with $l \geq 2$, 
then similarly by integration by parts, (ii) and (iii), 
\begin{align} 
&\int_{0}^{\infty} e^{i\lambda x^{p}} x^{q-1} a_{h}(x) \chi_{\varepsilon}^{(k)}(x) dx 
= \int_{0}^{\infty} e^{i\lambda x^{p}} L^{*l-1} (x^{q-1} a_{h}(x) \chi_{\varepsilon}^{(k)}(x)) dx \notag \\ 
&= \lim_{\substack{u \to +0\\v \to \infty}} \int_{u}^{v} \frac{1}{px^{p-1}} \frac{1}{i\lambda} \frac{d}{dx}(e^{i\lambda x^{p}}) L^{*l-1} (x^{q-1} a_{h}(x) \chi_{\varepsilon}^{(k)}(x)) dx \notag \\ 
&= \lim_{\substack{u \to +0\\v \to \infty}} \bigg\{ \Big[ e^{i\lambda x^{p}} (i\lambda px^{p-1})^{-1} L^{*l-1} (x^{q-1} a_{h}(x) \chi_{\varepsilon}^{(k)}(x)) \Big]_{u}^{v} \bigg. \notag \\ 
&\hspace{0.65cm}\bigg. + \int_{u}^{v} e^{i\lambda x^{p}} L^{*l} (x^{q-1} a_{h}(x) \chi_{\varepsilon}^{(k)}(x)) dx \bigg\} 
= \int_{0}^{\infty} e^{i\lambda x^{p}} L^{*l} (x^{q-1} a_{h}(x) \chi_{\varepsilon}^{(k)}(x)) dx. 
\notag 
\end{align} 
This completes the proof. 
\end{proof} 

By Lemma \ref{L_star_l}, 
we obtain the following theorem: 
\begin{thm} 
\label{Lax02} 
Assume that $\lambda > 0$, $p > 0$ and $q > 0$. 
Let $a \in \mathcal{A}^{\tau}_{\delta}(\mathbb{R})$, 
$\varphi \in C^{\infty}_{0}(\mathbb{R})$ a cutoff function such that $\varphi \equiv 1$ on $|x| \leq 1$ and $\varphi \equiv 0$ on $|x| \geq r > 1$, 
$\psi := 1 - \varphi$, 
and let $\chi \in \mathcal{S}(\mathbb{R})$ with $\chi(0) = 1$, $0 < \varepsilon < 1$ and $\chi_{\varepsilon}(x) := \chi(\varepsilon x)$ for $x \in \mathbb{R}$, 
and let $L^{*} := - \frac{1}{i\lambda} \frac{d}{dx}  \frac{1}{px^{p-1}}$ be the formal adjoint operator of $L := \frac{1}{px^{p-1}} \frac{1}{i\lambda} \frac{d}{dx}$, $l_{0}:=[q/p)$, 
and 
\begin{align} 
l_{p,q} := \left[ \frac{(q+\tau)^{+}}{p-1-\delta} \right] +1. 
\notag 
\end{align} 
Then the following hold: 
\begin{enumerate} 
\item[(i)] 
For each $k \in \mathbb{Z}_{\geq 0}$, 
there exist the following limit of improper integrals independent of $\chi_{\varepsilon}$, and the following holds: 
\begin{align} 
\lim_{\varepsilon \to +0} \int_{0}^{\infty} e^{\pm i\lambda x^{p}} x^{q-1} a(x) \varphi(x) \chi_{\varepsilon}^{(k)}(x) dx 
= \delta_{k0} \int_{0}^{\infty} e^{\pm i\lambda x^{p}} x^{q-1} a(x) \varphi(x) dx. 
\notag 
\end{align} 
\item[(ii)] 
For each $k \in \mathbb{Z}_{\geq 0}$, 
there exist the following limit of improper integrals independent of $\chi_{\varepsilon}$, 
and for any $l \in \mathbb{N}$ such that $l \geq l_{p,q}$, the following holds: 
\begin{align} 
&\lim_{\varepsilon \to +0} \int_{0}^{\infty} e^{\pm i\lambda x^{p}} x^{q-1} a(x) \psi(x) \chi_{\varepsilon}^{(k)}(x) dx 
= \delta_{k0} \int_{0}^{\infty} e^{\pm i\lambda x^{p}} (\pm L^{*})^{l} (x^{q-1} a(x) \psi(x)) dx. 
\notag 
\end{align} 
\item[(iii)] 
If $k \ne 0$, then 
\begin{align} 
\lim_{\varepsilon \to +0} \int_{0}^{\infty} e^{\pm i\lambda x^{p}} x^{q-1} a(x) \chi_{\varepsilon}^{(k)}(x) dx 
= 0. \notag 
\end{align} 
\item[(iv)] 
There exist the following oscillatory integrals, 
and for any $l \in \mathbb{N}$ such that $l \geq l_{p,q}$, the following holds: 
\begin{align} 
\tilde{I}_{p,q}^{\pm}[a](\lambda) 
:&= Os\text{-}\int_{0}^{\infty} e^{\pm i\lambda x^{p}} x^{q-1} a(x) dx 
:= \lim_{\varepsilon \to +0} \int_{0}^{\infty} e^{\pm i\lambda x^{p}} x^{q-1} a(x) \chi_{\varepsilon}(x) dx \notag \\ 
&= \int_{0}^{\infty} e^{i\lambda x^{p}} x^{q-1} a(x) \varphi(x) dx + \int_{0}^{\infty} e^{i\lambda x^{p}} L^{*l} (x^{q-1} a(x) \psi(x)) dx. 
\notag 
\end{align} 
Then for each $l \in \mathbb{N}$ such that $l \geq l_{p,q}$, there exists a positive constant $C_{l}$ such that for any $\lambda > 0$, 
\begin{align} 
|\tilde{I}_{p,q}^{\pm}[a](\lambda)| \leq C_{l} |a|^{(\tau)}_{l}, 
\notag 
\end{align} 
where $|a|^{(\tau)}_{l} := \max_{k=0,\dots, l} \sup_{x \in \mathbb{R}} \langle x \rangle^{-\tau - \delta k} |a^{(k)}(x)|$. 
\item[(v)] 
If $q > p$, 
then there exists a positive constant $C_{p,q}$ such that for any $\lambda \geq 1$, 
\begin{align} 
| \tilde{I}_{p,q}^{\pm}[a](\lambda) | 
\leq C_{p,q} |a|^{(\tau)}_{l_{0}+l_{p,q}} \lambda^{-\frac{q-p}{p}}, 
\notag 
\end{align} 
\end{enumerate} 
where double signs $\pm$ are in same order. 
\end{thm}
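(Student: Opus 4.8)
The plan is to reduce all five parts to the integration-by-parts identity and decay estimates of Lemma \ref{L_star_l}, passing to the limit $\varepsilon\to+0$ by dominated convergence with a majorant uniform in $\varepsilon$. As in Lemma \ref{L_star_l}, the lower sign is the complex conjugate of the upper one, so I treat only $e^{+i\lambda x^p}$. Parts (i) and (iii) are elementary. For (i), $a\varphi$ is supported in $[0,r]$ and $x^{q-1}$ is integrable there (as $q>0$), so the $\varepsilon$-independent majorant $C x^{q-1}$ on $[0,r]$ lets dominated convergence act; by Proposition \ref{chi epsilon}(i),(iii) one has $\chi_\varepsilon\to 1$ and $\chi_\varepsilon^{(k)}=\varepsilon^k\chi^{(k)}(\varepsilon\,\cdot)\to 0$ for $k\geq 1$, producing the factor $\delta_{k0}$. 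Part (iii) then follows by splitting $a=a\varphi+a\psi$ and adding the limits furnished by (i) and (ii).

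Part (ii) is the heart of the proof. Fix $k$ and apply Lemma \ref{L_star_l}(iv) with $h=1$ (valid for every $l\in\mathbb{N}$ since $p,q>0$) to rewrite the integrand as $(\pm L^*)^l(x^{q-1}a\psi\,\chi_\varepsilon^{(k)})$, then expand it by \eqref{L star_l_01} and the Leibniz rule $s+t+u=j$, where $s,t,u$ count derivatives falling on $a$, on $\psi$, and on $\chi_\varepsilon^{(k)}$ (turning it into $\chi_\varepsilon^{(k+u)}$). The decisive estimate is $|\chi_\varepsilon^{(k+u)}(x)|\leq C_{k+u}\langle x\rangle^{-(k+u)}\leq C\langle x\rangle^{-u}$ from Proposition \ref{chi epsilon}(ii): it is uniform in $\varepsilon\in(0,1)$ and its decay $\langle x\rangle^{-u}$ exactly cancels the factor $x^{u}$ hidden in $x^{q-1-pl+j}$ with $j=s+t+u$. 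Combining it with $|a^{(s)}(x)|\leq|a|^{(\tau)}_l\langle x\rangle^{\tau+\delta s}$ from \eqref{A_tau_delta_def02} and the boundedness of $\langle x\rangle^{t}|\psi^{(t)}|$, every term is dominated by a constant multiple of $\langle x\rangle^{q-1+\tau-(p-1-\delta)l}$ (the $t$- and $u$-dependence drops out, the worst exponent occurring at $s=l$), which is integrable on $[1,\infty)$ exactly when $(p-1-\delta)l>q+\tau$, i.e. when $l\geq l_{p,q}$; near the origin there is nothing to check since $a\psi\equiv 0$ there. With this $\varepsilon$-uniform integrable majorant in hand, dominated convergence applies: by Proposition \ref{chi epsilon}(iii) every term with $u\geq 1$ tends to $0$, and among the $u=0$ terms only $k=0$ survives (with $\chi_\varepsilon\to 1$ by Proposition \ref{chi epsilon}(i)), reassembling $(\pm L^*)^l(x^{q-1}a\psi)$ and yielding the factor $\delta_{k0}$. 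The limit contains no $\chi$, hence is independent of the regularizer and of $l\geq l_{p,q}$ by Lemma \ref{L_star_l}(iv).

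Part (iv) is the case $k=0$: summing the $k=0$ instances of (i) and (ii) shows the oscillatory integral exists, is independent of $\chi$, and equals the stated sum of the compact integral and the tail integral $\int_0^\infty e^{i\lambda x^p}L^{*l}(x^{q-1}a\psi)\,dx$. For the seminorm bound I estimate the two pieces: the compact piece is $\leq C|a|^{(\tau)}_0$ by integrability of $x^{q-1}$ on $[0,r]$, and the tail piece, through the explicit form \eqref{L star_l_01} of $L^{*l}$ together with the integrable majorant of (ii), is $\leq C\lambda^{-l}|a|^{(\tau)}_l$; adding them gives $|\tilde{I}_{p,q}^{\pm}[a](\lambda)|\leq C_l|a|^{(\tau)}_l$.

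Part (v) exploits $q>p$, so $l_0=[q/p)\geq 1$ and $0<q':=q-pl_0\leq p$. By Lemma \ref{L_star_l}(iv) with $h=0$, $l=l_0$ --- legitimate because $q>p$ keeps the origin exponent $q-1-pl_0>-1$ so that the boundary terms of Lemma \ref{L_star_l}(iii) vanish --- I integrate by parts $l_0$ times on the full regularized integrand and expand. The part with $\chi_\varepsilon$ undifferentiated reassembles $\big(\pm\frac{i}{\lambda p}\big)^{l_0}x^{q'-1}A(x)\chi_\varepsilon(x)$ with $A(x):=\sum_{j=0}^{l_0}C_{l_0,j}x^{j}a^{(j)}(x)$, while each term carrying $\chi_\varepsilon^{(u)}$ with $u\geq 1$ is, for the data $(q'+j,\,a^{(s)})$, of the kind treated in (iii) and hence tends to $0$. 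Passing to the limit gives
\[
\tilde{I}_{p,q}^{\pm}[a](\lambda)=\Big(\pm\frac{i}{\lambda p}\Big)^{l_0}\tilde{I}_{p,q'}^{\pm}[A](\lambda).
\]
A short computation from \eqref{A_tau_delta_def02} shows $A\in\mathcal{A}^{\tau'}_\delta$ with $\tau'=\tau+(1+\delta)l_0$ and $|A|^{(\tau')}_{l'}\leq C|a|^{(\tau)}_{l'+l_0}$, while $q'+\tau'=q+\tau-(p-1-\delta)l_0$ gives $l_{p,q'}\leq l_{p,q}$. Applying (iv) to $\tilde{I}_{p,q'}^{\pm}[A](\lambda)$ with $l'=l_{p,q'}$ bounds it by $C|a|^{(\tau)}_{l_0+l_{p,q}}$ uniformly for $\lambda\geq 1$, and since $l_0\geq (q-p)/p$ we have $\lambda^{-l_0}\leq\lambda^{-(q-p)/p}$ there, which is the claim. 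The main obstacle is the construction in (ii) of the $\varepsilon$-uniform integrable majorant: it rests on the exact cancellation between the growth $x^{u}$ produced by $L^{*l}$ and the decay $\langle x\rangle^{-u}$ supplied by Proposition \ref{chi epsilon}(ii), and this is precisely what pins down the threshold $l\geq l_{p,q}$.
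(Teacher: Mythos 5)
Your proof is correct and follows essentially the same route as the paper: parts (i)--(iv) via the integration-by-parts identity of Lemma \ref{L_star_l} and an $\varepsilon$-uniform integrable majorant with threshold $l\geq l_{p,q}$ feeding dominated convergence, and part (v) by $l_{0}$ integrations by parts to trade for the factor $\lambda^{-l_{0}}\leq\lambda^{-(q-p)/p}$. The only (cosmetic) difference is in (v), where you bundle the resulting sum into a single amplitude $A=\sum_{j}C_{l_{0},j}x^{j}a^{(j)}$ in $\mathcal{A}^{\tau+(1+\delta)l_{0}}_{\delta}(\mathbb{R})$, whereas the paper applies (iv) to each term $\tilde{I}^{+}_{p,q-pl_{0}+j}[a^{(j)}]$ separately; both yield the same seminorm index $l_{0}+l_{p,q}$.
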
 

\begin{proof} 
Since the lower side of double signs $\pm$ can be obtained as the conjugate of the upper one, 
we shall show the upper one. 

(i) Put $f_{k}(x) = e^{i\lambda x^{p}} x^{q-1} a(x) \varphi(x) \chi_{\varepsilon}^{(k)}(x)$ for $k \in \mathbb{Z}_{\geq 0}$. 
Since $f_{k}$ is continuous on $(0,\infty)$ and $f_{k} \equiv 0$ on $[r,\infty)$, 
then $f_{k}$ is integrable on $[u,\infty)$ for any $u \in (0,\infty)$ and $f_{k}(x) = O(x^{\alpha})~(x \to +0)$ with $\alpha =q-1>-1$. 
Thus for any $k \in \mathbb{Z}_{\geq 0}$, the following improper integral is absolutely convergent: 
\begin{align} 
\int_{0}^{\infty} e^{i\lambda x^{p}} x^{q-1} a(x) \varphi(x) \chi_{\varepsilon}^{(k)}(x) dx. 
\label{tilde_I_phi_a_varphi02} 
\end{align} 
In order to apply Lebesgue's convergence theorem, 
we shall show \eqref{tilde_I_phi_a_varphi02} is bounded independent of $\chi_{\varepsilon}$ for any $k \in \mathbb{Z}_{\geq 0}$. 
By \eqref{A_tau_delta_def02} and Proposition \ref{chi epsilon} (ii) in \S 2, 
there exists a positive constant $C_{0}$ independent of $0 < \varepsilon < 1$ such that for any $x \in (0,\infty)$, 
\begin{align} 
|e^{i\lambda x^{p}} x^{q-1} a(x) \varphi(x) \chi_{\varepsilon}^{(k)}(x)| 
\leq C_{0} |a|^{(\tau)}_{0} x^{q-1} \langle x \rangle^{\tau} |\varphi(x)| =: M(x). 
\label{M_est} 
\end{align} 
Since $M$ is continuous on $(0,\infty)$ and $M \equiv 0$ on $[r,\infty)$, 
then $M$ is integrable on $[u,\infty)$ for any $u \in (0,\infty)$, and $M(x) = O(x^{\alpha})~(x \to +0)$ with $\alpha = q-1 > -1$. 
Thus $\int_{0}^{\infty} M(x) dx$ is absolutely convergent independent of $\chi_{\varepsilon}$. 
Therefore by Lebesgue's convergence theorem and Proposition \ref{chi epsilon} (i) and (iii) in \S 2, 
for each $k \in \mathbb{Z}_{\geq 0}$, 
there exists the following limit of improper integral independent of $\chi_{\varepsilon}$, and the following holds: 
\begin{align} 
\lim_{\varepsilon \to +0} \int_{0}^{\infty} e^{i\lambda x^{p}} x^{q-1} a(x) \varphi(x) \chi_{\varepsilon}^{(k)}(x) dx 
= \delta_{k0} \int_{0}^{\infty} e^{i\lambda x^{p}} x^{q-1} a(x) \varphi(x) dx. 
\notag 
\end{align} 

(ii) By Lemma \ref{L_star_l} (ii) when $h=1$ and $l=0$, 
for any $k \in \mathbb{Z}_{\geq 0}$, the following improper integral is absolutely convergent: 
\begin{align} 
\int_{0}^{\infty} e^{i\lambda x^{p}} x^{q-1} a(x) \psi(x) \chi_{\varepsilon}^{(k)}(x) dx. 
\label{tilde_I_phi_a_psi02} 
\end{align} 
In order to apply Lebesgue's convergence theorem, 
we shall show \eqref{tilde_I_phi_a_psi02} is bounded independent of $\chi_{\varepsilon}$ for any $k \in \mathbb{Z}_{\geq 0}$. 
By Lemma \ref{L_star_l} (iv) when $h=1$, 
for any $l \in \mathbb{N}$, 
\begin{align} 
\int_{0}^{\infty} e^{i\lambda x^{p}} x^{q-1} a(x) \psi(x) \chi_{\varepsilon}^{(k)}(x) dx 
= \int_{0}^{\infty} e^{i\lambda x^{p}} L^{*l} (x^{q-1} a(x) \psi(x) \chi_{\varepsilon}^{(k)}(x)) dx. 
\label{L star} 
\end{align} 
By Lemma \ref{L_star_l} (i) and Proposition \ref{chi epsilon} (ii) in \S 2, 
this means that the order of integrand descends to Lebesgue integrable by 
$L^{*l}$ for sufficiently large number $l \gg 0$. 
We shall show this. 
By \eqref{|f_{h,l}(x)|} when $h=1$, 
for each $l \in \mathbb{Z}_{\geq 0}$, there exist real constants $(C_{l,0},\dots,C_{l,l}) \ne (0,\dots,0)$ such that 
for any $x \in [0,\infty)$, 
\begin{align} 
|L^{*l} (x^{q-1} a(x) \psi(x) \chi_{\varepsilon}^{(k)}(x))| 
&\leq (\lambda p)^{-l} \sum_{j=0}^{l} |C_{l,j}| |x|^{q-1-pl+j} \notag \\ 
&\hspace{0.5cm}\times \sum_{s+t+u=j} \frac{j!}{s!t!u!} |a|^{(\tau)}_{l} \langle x \rangle^{\tau + \delta(s+t)} |\psi|^{(l)}_{r} |\chi_{\varepsilon}^{(k+u)}(x)|, 
\label{L^*l_est00} 
\end{align} 
where $|\psi|^{(l)}_{r} := \max_{t=0,\dots,l} \sup_{|x|< r} \langle x \rangle^{t} |\psi^{(t)}(x)|+1$. 
Here since $\psi \equiv 0$ for $|x| \leq 1$, $\mathrm{supp} \psi \cap (0,\infty) \subset [1,\infty)$. 
Hence if $x \in \mathrm{supp} \psi \cap (0,\infty) $, since $|x| \geq 1$, by \eqref{|x|_angle_x}, then $|x|^{\tau} \leq 2^{|\tau|/2} \langle x \rangle^{\tau}$ for $\tau \in \mathbb{R}$. 
And by Proposition \ref{chi epsilon} (ii) in \S 2 with $-1 \leq \delta$, for each $k \in \mathbb{Z}_{\geq 0}$ and for each $u=0,\dots,l$, 
there exists a positive constant $C_{k+u}$ independent of $0 < \varepsilon < 1$ such that for any $0 < \varepsilon < 1$ and for any $x \in [0,\infty)$, 
\begin{align} 
|\chi_{\varepsilon}^{(k+u)}(x)| \leq C_{k+u} \langle x \rangle^{-k-u} \leq C_{k+u} \langle x \rangle^{\delta u}. 
\label{partial_x_k+w_chi_varepsilon_x} 
\end{align} 
Hence by \eqref{L^*l_est00}, \eqref{|x|_angle_x} and \eqref{partial_x_k+w_chi_varepsilon_x}, 
for any $x \in [0,\infty)$, 
\begin{align} 
&|L^{*l} (x^{q-1} a(x) \psi(x) \chi_{\varepsilon}^{(k)}(x))| \notag \\ 
&\leq (\lambda p)^{-l} \sum_{j=0}^{l} |C_{l,j}| 2^{|q-1-pl+j|/2} \langle x \rangle^{q-1-pl+j} \notag \\ 
&\hspace{0.5cm}\times \sum_{s+t+u=j} \frac{j!}{s!t!u!} |a|^{(\tau)}_{l} \langle x \rangle^{\tau + \delta(s+t)} |\psi|^{(l)}_{r} C_{k+u} \langle x \rangle^{\delta u} \notag \\ 
&\leq (\lambda p)^{-l} \sum_{j=0}^{l} \max_{j=0,\dots,l} |C_{l,j}| 2^{|q-1-pl+l|/2} \notag \\ 
&\hspace{0.5cm}\times \sum_{s+t+u=j} \frac{j!}{s!t!u!} |a|^{(\tau)}_{l} |\psi|^{(l)}_{r} \max_{u=0,\dots,l} C_{k+u} \langle x \rangle^{q-1+\tau-pl+(1+\delta)l} \notag \\ 
&\leq (\lambda p)^{-l} \max_{j=0,\dots,l} |C_{l,j}| 2^{|q-1-pl+l|/2} \notag \\ 
&\hspace{0.5cm}\times \sum_{j=0}^{l} 3^{j} |a|^{(\tau)}_{l} |\psi|^{(l)}_{r} \max_{u=0,\dots,l} C_{k+u} \langle x \rangle^{(q+\tau)^{+}-1-(p-1-\delta)l} \notag \\ 
&= C^{(k)}_{l} \lambda^{-l} |a|^{(\tau)}_{l} \langle x \rangle^{\beta} =: M_{k}(x), 
\label{estimate of L star} 
\end{align} 
where 
\begin{align} 
C^{(k)}_{l} 
:&= p^{-l} \max_{j=0,\dots,l} |C_{l,j}| 2^{|q-1-pl+l|/2} \cdot \frac{3^{l+1}-1}{2} |\psi|^{(l)}_{r} \max_{u=0,\dots,l} C_{k+u}, 
\notag 
\end{align} 
and 
\begin{align} 
\beta = (q+\tau)^{+}-1 -(p-1-\delta)l. 
\label{beta_q-1+tau-(p-1-delta)m}
\end{align} 
Since $M_{k}$ is continuous on $[0,\infty)$, then $M_{k}$ is integrable on $[0,v]$ for any $v \in [0,\infty)$, and $M_{k}(x) = O(x^{\beta})~(x \to \infty)$. 
Moreover let
\begin{align} 
l_{p,q} := \left[ \frac{(q+\tau)^{+}}{p-1-\delta} \right] +1. 
\label{m_def} 
\end{align} 
Then by \eqref{beta_q-1+tau-(p-1-delta)m}, since $x-[x]-1<0$ for $x \in \mathbb{R}$, 
for any $l \in \mathbb{N}$ such that $l \geq l_{p,q}$, 
\begin{align} 
\beta 
&\leq (q+\tau)^{+}-1 -(p-1-\delta) l_{p,q} = (p-1-\delta) \left\{ \frac{(q+\tau)^{+}}{p-1-\delta} - l_{p,q} \right\} -1 \notag \\ 
&= (p-1-\delta) \left\{ \frac{(q+\tau)^{+}}{p-1-\delta} - \left[ \frac{(q+\tau)^{+}}{p-1-\delta} \right] -1 \right\} -1 < -1. 
\notag 
\end{align} 
Thus for any $k \in \mathbb{Z}_{\geq 0}$, $\int_{0}^{\infty} M_{k}(x) dx$ is absolutely convergent independent of $\chi_{\varepsilon}$. 

Therefore by applying Lebesgue's convergence theorem to the right hand side of \eqref{L star} as $\varepsilon \to +0$, and using Proposition \ref{chi epsilon} (i) and (iii) in \S 2, 
for each $k \in \mathbb{Z}_{\geq 0}$, 
there exists the following limit of improper integral independent of $\chi_{\varepsilon}$, and for any $l \in \mathbb{N}$ such that $l \geq l_{p,q}$, the following holds: 
\begin{align} 
&\lim_{\varepsilon \to +0} \int_{0}^{\infty} e^{i\lambda x^{p}} x^{q-1} a(x) \psi(x) \chi_{\varepsilon}^{(k)}(x) dx 
= \delta_{k0} \int_{0}^{\infty} e^{i\lambda x^{p}} L^{*l} (x^{q-1} a(x) \psi(x) ) dx. 
\notag 
\end{align} 

(iii) 
If $k \ne 0$, by (i) and (ii), since $\varphi + \psi \equiv 1$, then there exists the following limit of improper integral independent of $\chi_{\varepsilon}$, and the following holds: 
\begin{align} 
&\lim_{\varepsilon \to +0} \int_{0}^{\infty} e^{i\lambda x^{p}} x^{q-1} a(x) \chi_{\varepsilon}^{(k)}(x) dx 
= \lim_{\varepsilon \to +0} \int_{0}^{\infty} e^{i\lambda x^{p}} x^{q-1} a(x) \varphi(x) \chi_{\varepsilon}^{(k)}(x) dx \notag \\ 
&\hspace{0.25cm}+ \lim_{\varepsilon \to +0} \int_{0}^{\infty} e^{i\lambda x^{p}} x^{q-1} a(x) \psi(x) \chi_{\varepsilon}^{(k)}(x) dx = 0. 
\notag 
\end{align} 

(iv) 
If $k = 0$, by (i) and (ii), since $\varphi + \psi \equiv 1$, then there exists the following oscillatory integral, 
and for any $l \in \mathbb{N}$ such that $l \geq l_{p,q}$, the following holds: 
\begin{align} 
\tilde{I}_{p,q}^{+}[a](\lambda) 
:&= Os\text{-}\int_{0}^{\infty} e^{i\lambda x^{p}} x^{q-1} a(x) dx 
:= \lim_{\varepsilon \to +0} \int_{0}^{\infty} e^{i\lambda x^{p}} x^{q-1} a(x) \chi_{\varepsilon}(x) dx \notag \\ 
&= \int_{0}^{\infty} e^{i\lambda x^{p}} x^{q-1} a(x) \varphi(x) dx + \int_{0}^{\infty} e^{i\lambda x^{p}} L^{*l} (x^{q-1} a(x) \psi(x)) dx. 
\notag 
\end{align} 
Then by \eqref{M_est} and \eqref{estimate of L star} when $k=0$ and $\varepsilon \to +0$, for any $\lambda > 0$, 
\begin{align} 
|\tilde{I}_{p,q}^{+}[a](\lambda)| 
\leq C_{l} |a|^{(\tau)}_{l}, 
\label{tilde_I_m_est} 
\end{align} 
where $C_{l} = \int_{0}^{\infty} C_{0} x^{q-1} \langle x \rangle^{\tau} |\varphi(x)| dx + \int_{0}^{\infty} C^{(0)}_{l} \lambda^{-l} |\psi|^{(l)}_{r} \langle x \rangle^{\beta} dx$. 

(v) 
When $q > p$, 
let $l_{0} = [q/p)$. Since $(q/p)-1 \leq l_{0} < q/p$, then $0 < q-pl_{0} \leq p$. 
Since $q-pl_{0}+j >0$ and $a^{(j)} \in \mathcal{A}^{\tau +\delta j}_{\delta}(\mathbb{R})$ for $j=0,\dots,l_{0}$, 
by Lemma \ref{L_star_l} (iv) and (i) when $h=0$ and $k=0$, and by (iii) and (iv), 
\begin{align} 
\tilde{I}_{p,q}^{+}[a](\lambda) 
:&= \lim_{\varepsilon \to +0} \int_{0}^{\infty} e^{i\lambda x^{p}} x^{q-1} a(x) \chi_{\varepsilon}(x) dx \notag \\ 
&= \lim_{\varepsilon \to +0} \int_{0}^{\infty} e^{i\lambda x^{p}} L^{*l_{0}}(x^{q-1} a(x) \chi_{\varepsilon}(x)) dx \notag \\ 
&= \lim_{\varepsilon \to +0} \int_{0}^{\infty} e^{i\lambda x^{p}} \left( \frac{i}{\lambda p} \right)^{l_{0}} \sum_{j=0}^{l_{0}} C_{l_{0},j} x^{q-1-pl_{0}+j} (a(x) \chi_{\varepsilon}(x))^{(j)} dx \notag \\ 
&= \left( \frac{i}{\lambda p} \right)^{l_{0}} \sum_{j=0}^{l_{0}} C_{l_{0},j} \tilde{I}_{p,q-pl_{0}+j}^{+}[a^{(j)}](\lambda), 
\notag 
\end{align} 
where $(C_{l_{0},0},\dots, C_{l_{0},l_{0}}) \ne (0,\dots,0)$. 
Here for each $j=0,\dots,l_{0}$, let 
\begin{align} 
w_{j} := \left[ \frac{(q-pl_{0}+j+\tau +\delta j)^{+}}{p-1-\delta} \right] +1. 
\label{m_j_def} 
\end{align} 
Then by \eqref{tilde_I_m_est}, \eqref{m_def} and \eqref{A_tau_delta_def03}, 
there exists a positive constant $C_{w_{j}}$ such that for any $\lambda \geq 1$, 
\begin{align} 
|\tilde{I}_{p,q}^{+}[a](\lambda)| 
&\leq (\lambda p)^{-l_{0}} \sum_{j=0}^{l_{0}} |C_{l_{0},j}| |\tilde{I}_{p,q-pl_{0}+j}^{+}[a^{(j)}](\lambda)| 
\leq p^{-l_{0}} \sum_{j=0}^{l_{0}} |C_{l_{0},j}| C_{w_{j}} |a|^{(\tau)}_{j+w_{j}} \lambda^{-l_{0}} \notag \\ 
&\leq p^{-l_{0}} (l_{0}+1) \max_{j=0,\dots,l_{0}} |C_{l_{0},j}| C_{w_{j}} |a|^{(\tau)}_{l_{0}+w_{l_{0}}} \lambda^{-\frac{q}{p}+1} 
\leq C_{p,q} |a|^{(\tau)}_{l_{0}+w_{l_{0}}} \lambda^{-\frac{q-p}{p}}, 
\label{|tilde_I_p_q_+_a_(lambda)|} 
\end{align} 
where $C_{p,q} = p^{-l_{0}} (l_{0}+1) \max_{j=0,\dots,l_{0}} | C_{l_{0},j} | C_{w_{j}}$. 
Here by \eqref{m_j_def}, \eqref{m_def}, 
we see 
\begin{align} 
w_{l_{0}} 
&= \left[ \frac{\{ q+\tau - (p-1-\delta) l_{0} \}^{+}}{p-1-\delta} \right] +1 
\leq 
\left[ \frac{(q+\tau)^{+}}{p-1-\delta} \right] +1 =: 
l_{p,q}. 
\label{l_{0} + w_{l_{0}}} 
\end{align} 

Therefore by \eqref{|tilde_I_p_q_+_a_(lambda)|} and \eqref{l_{0} + w_{l_{0}}}, for any $\lambda \geq 1$, 
\begin{align} 
|\tilde{I}_{p,q}^{+}[a](\lambda)| 
\leq C_{p,q} |a|^{(\tau)}_{l_{0}+l_{p,q}} \lambda^{-\frac{q-p}{p}}. 
\notag 
\end{align} 
\end{proof} 

If $p = m \in \mathbb{N}$, $q=1$ and $k=0$, then the following holds: 
\begin{thm} 
\label{Os_m} 
Assume that $\lambda > 0$ and $m \in \mathbb{N}$. 
Let $a \in \mathcal{A}^{\tau}_{\delta}(\mathbb{R})$, 
$\varphi \in C^{\infty}_{0}(\mathbb{R})$ a cutoff function such that $\varphi \equiv 1$ on $|x| \leq 1$ and $\varphi \equiv 0$ on $|x| \geq r > 1$, 
$\psi := 1 - \varphi$, 
and let $\chi \in \mathcal{S}(\mathbb{R})$ with $\chi(0) = 1$, $0 < \varepsilon < 1$ and $\chi_{\varepsilon}(x) := \chi(\varepsilon x)$ for $x \in \mathbb{R}$, 
and let $L^{*} := - \frac{1}{i\lambda} \frac{d}{dx}  \frac{1}{mx^{m-1}}$ be the formal adjoint operator of $L := \frac{1}{mx^{m-1}} \frac{1}{i\lambda} \frac{d}{dx}$,
and 
\begin{align} 
l_{m,1} := \left[ \frac{(1+\tau)^{+}}{m-1-\delta} \right] +1. 
\notag 
\end{align} 
Then the following hold: 
\begin{enumerate} 
\item[(i)] 
There exist the following oscillatory integrals, and the following holds: 
\begin{align} 
&\tilde{J}_{m}^{\pm}[a \varphi](\lambda) 
:= Os\text{-}\int_{-\infty}^{\infty} e^{\pm i\lambda x^{m}} a(x) \varphi(x) dx 
= \int_{-\infty}^{\infty} e^{\pm i\lambda x^{m}} a(x) \varphi(x) dx. 
\notag 
\end{align} 
\item[(ii)] 
There exist the following oscillatory integrals, and for any $l \in \mathbb{N}$ such that $l \geq l_{m,1}$, the following holds: 
\begin{align} 
\tilde{J}_{m}^{\pm}[a \psi](\lambda) 
:= Os\text{-}\int_{-\infty}^{\infty} e^{\pm i\lambda x^{m}} a(x) \psi(x) dx 
= \int_{-\infty}^{\infty} e^{\pm i\lambda x^{m}} (\pm L^{*})^{l} (a(x) \psi(x)) dx. 
\notag 
\end{align} 
Then for each $l \in \mathbb{N}$ such that $l \geq l_{m,1}$, there exists a positive constant $C_{l}$ such that for any $\lambda > 0$, 
\begin{align} 
| \tilde{J}_{m}^{\pm}[a \psi(x)](\lambda) | \leq C_{l} \lambda^{-l} |a|^{(\tau)}_{l}, 
\notag 
\end{align} 
where $|a|^{(\tau)}_{l} := \max_{k=0,\dots, l} \sup_{x \in \mathbb{R}} \langle x \rangle^{-\tau - \delta k} |a^{(k)}(x)|$. 
\item[(iii)] 
There exist the following oscillatory integrals, and for any $l \in \mathbb{N}$ such that $l \geq l_{m,1}$, the following holds: 
\begin{align} 
\tilde{J}_{m}^{\pm}[a](\lambda) 
:&= Os\text{-}\int_{-\infty}^{\infty} e^{\pm i\lambda x^{m}} a(x) dx \notag \\ 
&= \int_{-\infty}^{\infty} e^{\pm i\lambda x^{m}} a(x) \varphi(x) dx + \int_{-\infty}^{\infty} e^{\pm i\lambda x^{m}} (\pm L^{*})^{l} (a(x) \psi(x)) dx, 
\notag 
\end{align} 
\end{enumerate} 
where double signs $\pm$ are in same order. 
\end{thm}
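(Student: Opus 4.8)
The plan is to treat Theorem~\ref{Os_m} as the two-sided, $q=1$ specialization of Theorem~\ref{Lax02}, obtained by splitting $\int_{-\infty}^{\infty}=\int_{0}^{\infty}+\int_{-\infty}^{0}$ and handling each half with the machinery of Lemma~\ref{L_star_l} and Theorem~\ref{Lax02}. Throughout, the decisive structural fact is that $L=\frac{1}{mx^{m-1}}\frac{1}{i\lambda}\frac{d}{dx}$ satisfies $L(e^{\pm i\lambda x^{m}})=e^{\pm i\lambda x^{m}}$ for \emph{every} $x\neq 0$, in particular for $x<0$, so the same operator reproduces the phase on both half-lines; and $\psi=1-\varphi$ vanishes on $|x|\le 1$, so $a\psi$ is supported in $|x|\ge 1$, away from the singularity of $L$ at the origin.

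For part (i), first I would note that $a\varphi\in C^{\infty}_{0}(\mathbb{R})$, so $e^{\pm i\lambda x^{m}}a(x)\varphi(x)\chi_{\varepsilon}(x)$ is dominated uniformly in $0<\varepsilon<1$ by the fixed integrable function $|a\varphi|$; since $\chi_{\varepsilon}\to 1$ pointwise by Proposition~\ref{chi epsilon}(i), Lebesgue's convergence theorem gives existence of the $\varepsilon\to+0$ limit, its independence of $\chi_{\varepsilon}$, and equality with $\int_{-\infty}^{\infty}e^{\pm i\lambda x^{m}}a(x)\varphi(x)\,dx$. Equivalently one may apply Theorem~\ref{Lax02}(i) with $p=m$, $q=1$, $k=0$ to $\int_{0}^{\infty}$, and to $\int_{-\infty}^{0}$ after the reflection $x\mapsto -t$ (which turns the phase sign into $\pm(-1)^{m}=\pm\pm_{m}$), then add.

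For part (ii), the key step is the integration by parts. Because $a\psi$ is supported in $|x|\ge 1$, the integrand $e^{\pm i\lambda x^{m}}a\psi\chi_{\varepsilon}$ vanishes in a neighborhood of $0$, so I can integrate by parts over all of $\mathbb{R}$ using $L$ and its adjoint $L^{*}$ without incurring any boundary contribution at the origin; the boundary terms at $x\to\pm\infty$ vanish by the decay established in the form of Lemma~\ref{L_star_l}(iii), whose estimates are phrased through $|x|$ and $\langle x\rangle$ and are therefore even in $x$, hence valid as $x\to-\infty$ as well. Iterating $l$ times yields $\int_{-\infty}^{\infty}e^{\pm i\lambda x^{m}}a\psi\chi_{\varepsilon}\,dx=\int_{-\infty}^{\infty}e^{\pm i\lambda x^{m}}(\pm L^{*})^{l}(a\psi\chi_{\varepsilon})\,dx$. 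Reading the bound \eqref{estimate of L star} with $q=1$, the right-hand integrand is dominated, uniformly in $\varepsilon$, by $C^{(k)}_{l}\lambda^{-l}|a|^{(\tau)}_{l}\langle x\rangle^{\beta}$ with $\beta=(1+\tau)^{+}-1-(m-1-\delta)l$, which is integrable on $\mathbb{R}$ once $l\ge l_{m,1}$ since then $\beta<-1$. Lebesgue's convergence theorem (using Proposition~\ref{chi epsilon}(i),(iii)) then produces the limit, its independence of $\chi_{\varepsilon}$, the representation with $\chi_{\varepsilon}$ removed, and the estimate $|\tilde{J}_{m}^{\pm}[a\psi](\lambda)|\le C_{l}\lambda^{-l}|a|^{(\tau)}_{l}$, the factor $\lambda^{-l}$ coming from the $(\lambda p)^{-l}$ in \eqref{L star_l_01}.

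Finally, for part (iii) I would add (i) and (ii), using $\varphi+\psi\equiv 1$, to obtain existence of $\tilde{J}_{m}^{\pm}[a](\lambda)$ together with the stated splitting into the compact piece and the $(\pm L^{*})^{l}$ piece. The step I expect to require the most care is the sign bookkeeping in (ii)--(iii): one must check that the same operator $(\pm L^{*})^{l}$, written in the original variable $x$, is correct on the negative half-line. If one proceeds by reflection and invokes Theorem~\ref{Lax02} with phase sign $\pm\pm_{m}$, the operator produced is $(\pm\pm_{m}L^{*})^{l}$ in the reflected variable, and one must verify that undoing the substitution returns exactly $(\pm L^{*})^{l}(a\psi)$ in $x$; the direct integration-by-parts over $\mathbb{R}$ described above sidesteps this by never changing variables, which is why I would organize the argument around the identity $L(e^{\pm i\lambda x^{m}})=e^{\pm i\lambda x^{m}}$ holding for all $x\neq 0$.
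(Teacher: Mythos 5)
Your proposal is correct, but for the substantive part (ii) it takes a genuinely different route from the paper. The paper never integrates by parts on the negative half-line directly: it splits $\int_{-\infty}^{\infty}=\int_{0}^{\infty}+\int_{-\infty}^{0}$, reflects the second piece via $x=-y$ so that the phase becomes $e^{\pm(-1)^{m}i\lambda y^{m}}$ and the hypotheses on $a(-y)$, $\varphi(-y)$, $\chi(-y)$ are preserved, applies Theorem~\ref{Lax02} (i)--(ii) with $q=1$, $k=0$ on $(0,\infty)$ verbatim, and then undoes the substitution using the operator identity $L_{x}^{*}=(-1)^{m}L_{y}^{*}$ to land back on $(\pm L^{*})^{l}(a\psi)$ in the original variable --- exactly the sign bookkeeping you flag as delicate. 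Your alternative, a single two-sided integration by parts justified by $L(e^{\pm i\lambda x^{m}})=e^{\pm i\lambda x^{m}}$ for all $x\neq 0$ and by $\operatorname{supp}(a\psi)\subset\{|x|\geq 1\}$, sidesteps that bookkeeping entirely and is valid; the one thing it quietly requires is an extension of Lemma~\ref{L_star_l} (the formula \eqref{L star_l_01}, the decay in (iii), and the bound \eqref{estimate of L star}) from $(0,\infty)$ to $x<0$. That extension is harmless here precisely because $p=m\in\mathbb{N}$ and $q=1$ make every exponent $x^{-ml+j}$ an integer power and the estimates depend only on $|x|$ and $\langle x\rangle$, but it is the reason the paper prefers reflection: Lemma~\ref{L_star_l} is stated and proved only on the positive half-line (for general real $p,q$), so reflecting lets the authors reuse it without modification. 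In short, your argument buys a cleaner, sign-free derivation at the cost of a small re-verification of the half-line lemmas on $(-\infty,0)$, while the paper buys strict reuse of Theorem~\ref{Lax02} at the cost of tracking $(-1)^{m}$ through the change of variables; both close the argument for (i) and (iii) in the same way, by dominated convergence for the compactly supported piece and by adding the two pieces via $\varphi+\psi\equiv 1$.
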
 

\begin{proof} 
Since the lower side of double signs $\pm$ can be obtained as the conjugate of the upper one, 
we shall show the upper one. 

(i) 
By Theorem \ref{Lax02} (i) when $q=1$ and $k=0$, there exist the following oscillatory integrals, and the following holds: 
\begin{align} 
\tilde{I}_{m,1}^{\pm}[a \varphi](\lambda) 
:= Os\text{-}\int_{0}^{\infty} e^{\pm i\lambda x^{m}} a(x) \varphi(x) dx 
= \int_{0}^{\infty} e^{\pm i\lambda x^{m}} a(x) \varphi(x) dx, 
\label{tilde_I_m_1_pm_a_varphi_lambda)} 
\end{align} 
where double signs $\pm$ are in same order. 

By change of variable $x = -y$, 
since $a(-y) \in \mathcal{A}^{\tau}_{\delta}(\mathbb{R})$, $\varphi(-y) \in C^{\infty}_{0}(\mathbb{R})$ such that $\varphi(-y) \equiv 1$ on $|y| \leq 1$ and $\varphi(-y) \equiv 0$ on $|y| \geq r$ and $\chi(-y) \in \mathcal{S}(\mathbb{R})$ with $\chi(0) = 1$, then by \eqref{tilde_I_m_1_pm_a_varphi_lambda)}, 
\begin{align} 
&Os\text{-}\int_{-\infty}^{0} e^{i\lambda x^{m}} a(x) \varphi(x) dx 
:= \lim_{\varepsilon \to +0} \lim_{\substack{u \to +0\\v \to \infty}} \int_{-v}^{-u} e^{i\lambda x^{m}} a(x) \varphi(x) \chi_{\varepsilon}(x) dx \notag \\ 
&= \lim_{\varepsilon \to +0} \lim_{\substack{u \to +0\\v \to \infty}} \int_{v}^{u} e^{(-1)^{m}i\lambda y^{m}} a(-y) \varphi(-y) \chi_{\varepsilon}(-y) (-dy) \notag \\ 
&= Os\text{-}\int_{0}^{\infty} e^{(-1)^{m} i\lambda y^{m}} a(-y) \varphi(-y) dy 
= \int_{0}^{\infty} e^{(-1)^{m} i\lambda y^{m}} a(-y) \varphi(-y) dy 
\label{Os_m_varphi_change_of_variable} \\ 
&= \lim_{\substack{u \to +0\\v \to \infty}} \int_{u}^{v} e^{(-1)^{m}i\lambda y^{m}} a(-y) \varphi(-y) dy 
= \lim_{\substack{u \to +0\\v \to \infty}} \int_{-v}^{-u} e^{i\lambda x^{m}} a(x) \varphi(x) dx \notag \\ 
&= \int_{-\infty}^{0} e^{i\lambda x^{m}} a(x) \varphi(x) dx. 
\label{Os_int_-infty_0_e_ilambda_x_m_a(x)_varphi(x)_dx} 
\end{align} 
Hence by \eqref{tilde_I_m_1_pm_a_varphi_lambda)} and \eqref{Os_int_-infty_0_e_ilambda_x_m_a(x)_varphi(x)_dx}, 
\begin{align} 
&\tilde{J}_{m}^{+}[a \varphi](\lambda) 
:= Os\text{-}\int_{-\infty}^{\infty} e^{i\lambda x^{m}} a(x) \varphi(x) dx 
= \int_{-\infty}^{\infty} e^{i\lambda x^{m}} a(x) \varphi(x) dx. 
\notag 
\end{align} 

(ii) 
By Theorem \ref{Lax02} (ii) when $q=1$ and $k=0$, 
there exist the following oscillatory integrals, and for any $l \in \mathbb{N}$ such that $l \geq l_{m,1}$, the following holds: 
\begin{align} 
\tilde{I}_{m,1}^{\pm}[a \psi](\lambda) 
:= Os\text{-}\int_{0}^{\infty} e^{\pm i\lambda x^{m}} a(x) \psi(x) dx 
= \int_{0}^{\infty} e^{\pm i\lambda x^{m}} (\pm L^{*})^{l} (a(x) \psi(x)) dx, 
\label{tilde_I_m_1_pm_a_psi_lambda} 
\end{align} 
where $\pm$ are in same order. 
And by \eqref{estimate of L star} when $k=0$ and $\varepsilon \to +0$, for each $l \in \mathbb{N}$ such that $l \geq l_{m,1}$, there exists a positive constant $C_{l}$ such that for any $\lambda > 0$, 
\begin{align} 
| \tilde{I}_{m,1}^{\pm}[a \psi](\lambda) | 
\leq 2^{-1} C_{l} \lambda^{-l} |a|^{(\tau)}_{l}. 
\label{tilde_I_m_1_a_psi_est01} 
\end{align} 

By change of variable $x = -y$, then $a(-y) \in \mathcal{A}^{\tau}_{\delta}(\mathbb{R})$, $\varphi(-y) \in C^{\infty}_{0}(\mathbb{R})$ such that $\varphi(-y) \equiv 1$ on $|y| \leq 1$, 
$\varphi(-y) \equiv 0$ on $|y| \geq r$, 
$\chi(-y) \in \mathcal{S}(\mathbb{R})$ with $\chi(0) = 1$, 
and 
\begin{align} 
L_{x}^{*} 
&= - \frac{1}{i\lambda} \frac{d}{dx}  \frac{1}{mx^{m-1}} 
= - \frac{1}{i\lambda} \frac{1}{\frac{dx}{dy}} \frac{d}{dy}  \frac{1}{m (-y)^{m-1}} 
= (-1)^{m} L_{y}^{*}. 
\notag 
\end{align} 
Then by \eqref{tilde_I_m_1_pm_a_psi_lambda}, for any $l \in \mathbb{N}$ such that $l \geq l_{m,1}$, 
\begin{align} 
&Os\text{-}\int_{-\infty}^{0} e^{i\lambda x^{m}} a(x) \psi(x) dx 
:= \lim_{\varepsilon \to +0} \lim_{\substack{u \to +0\\v \to \infty}} \int_{-v}^{-u} e^{i\lambda x^{m}} a(x) \psi(x) \chi_{\varepsilon}(x) dx \notag \\ 
&= \lim_{\varepsilon \to +0} \lim_{\substack{u \to +0\\v \to \infty}} \int_{v}^{u} e^{(-1)^{m}i\lambda y^{m}} a(-y) \psi(-y) \chi_{\varepsilon}(-y) (-dy) \notag \\ 
&= Os\text{-}\int_{0}^{\infty} e^{(-1)^{m} i\lambda y^{m}} a(-y) \psi(-y) dy 
\label{Os_m_psi_change_of_variable} \\ 
&= \int_{0}^{\infty} e^{(-1)^{m} i\lambda y^{m}} ((-1)^{m} L_{y}^{*})^{l} (a(-y) \psi(-y)) dy \notag \\ 
&= \lim_{\substack{u \to +0\\v \to \infty}} \int_{u}^{v} e^{(-1)^{m}i\lambda y^{m}} ((-1)^{m} L_{y}^{*})^{l} (a(-y) \psi(-y)) dy \notag \\ 
&= \lim_{\substack{u \to +0\\v \to \infty}} \int_{-v}^{-u} e^{i\lambda x^{m}} L_{x}^{*l}(a(x) \psi(x)) dx 
= \int_{-\infty}^{0} e^{i\lambda x^{m}} L_{x}^{*l}(a(x) \psi(x)) dx. 
\label{Os_int_-infty_0_e_ilambda_x_m_a(x)_psi(x)_dx} 
\end{align} 
And then by \eqref{tilde_I_m_1_a_psi_est01} and \eqref{Os_m_psi_change_of_variable}, for any $\lambda > 0$, the following holds: 
\begin{align} 
| \tilde{I}_{m,1}^{\pm_{m}}[a(-y) \psi(-y)](\lambda) | 
\leq 2^{-1} C_{l} \lambda^{-l} |a|^{(\tau)}_{l}. 
\label{tilde_I_m_1_a_psi_est02} 
\end{align} 

Hence by \eqref{tilde_I_m_1_pm_a_psi_lambda} and \eqref{Os_int_-infty_0_e_ilambda_x_m_a(x)_psi(x)_dx}, for any $l \in \mathbb{N}$ such that $l \geq l_{m,1}$, 
\begin{align} 
\tilde{J}_{m}^{+}[a \psi](\lambda) 
:&= Os\text{-}\int_{-\infty}^{\infty} e^{i\lambda x^{m}} a(x) \psi(x) dx 
= \int_{-\infty}^{\infty} e^{i\lambda x^{m}} L^{*l}(a(x) \psi(x)) dx. 
\notag 
\end{align} 
And by \eqref{tilde_I_m_1_a_psi_est01} and \eqref{tilde_I_m_1_a_psi_est02}, for each $l \in \mathbb{N}$ such that $l \geq l_{m,1}$, there exists a positive constant $C_{l}$ such that for any $\lambda > 0$, 
\begin{align} 
| \tilde{J}_{m}^{\pm}[a \psi(x)](\lambda) | \leq C_{l} \lambda^{-l} |a|^{(\tau)}_{l}. 
\notag 
\end{align} 

(iii) By (i) and (ii), since $\varphi + \psi \equiv 1$, then there exists the following oscillatory integral, and for any $l \in \mathbb{N}$ such that $l \geq l_{m,1}$, and the following holds: 
\begin{align} 
\tilde{J}_{m}^{+}[a](\lambda) 
:&= Os\text{-}\int_{-\infty}^{\infty} e^{i\lambda x^{m}} a(x) dx \notag \\ 
&= \int_{-\infty}^{\infty} e^{i\lambda x^{m}} a(x) \varphi(x) dx + \int_{-\infty}^{\infty} e^{i\lambda x^{m}} L^{*l} (a(x) \psi(x)) dx. 
\notag 
\end{align} 
\end{proof} 

\section{Generalized Fresnel Integrals} 

In this section, 
we consider a generalization of the Fresnel integrals. 

\begin{lem} 
\label{Generalized the Fresnel integrals} 
Assume that $p > q > 0$. 
Then the following holds: 
\begin{align} 
I_{p,q}^{\pm} 
:= \int_{0}^{\infty} e^{\pm ix^{p}} x^{q-1} dx 
= p^{-1} e^{\pm i\frac{\pi}{2} \frac{q}{p}} \varGamma \left( \frac{q}{p} \right), 
\label{I_pq} 
\end{align} 
where $\varGamma$ is the Gamma function 
and double signs $\pm$ are in same order. 
\end{lem}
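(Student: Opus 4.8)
The plan is to prove the upper-sign identity by contour integration and then obtain the lower-sign one by complex conjugation, following the idea announced in the introduction of deforming the ``fan'' used in the classical proof of the Fresnel integrals. First I would apply Cauchy's integral theorem to the holomorphic function $f(z) = e^{iz^{p}} z^{q-1}$ (principal branch of $z^{q-1}$, holomorphic off the negative real axis and the origin, hence on the sector in question) over the truncated sector bounded by the segment $[\varepsilon,R]$ of the positive real axis, the outer arc $|z| = R$, the ray $\arg z = \theta$ with $\theta := \pi/(2p)$, and the inner arc $|z| = \varepsilon$. The angle $\theta$ is chosen precisely so that on the ray $z = re^{i\theta}$ one has $iz^{p} = i r^{p} e^{i\pi/2} = -r^{p}$, converting the oscillatory factor $e^{iz^{p}}$ into the decaying factor $e^{-r^{p}}$. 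Since $f$ is holomorphic on the closed sector minus the origin, the contour integral vanishes.

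Next I would dispose of the two arcs. On the outer arc $z = Re^{i\phi}$ with $\phi \in [0,\theta]$ one has $|e^{iz^{p}}| = e^{-R^{p}\sin(p\phi)}$ and $p\phi \in [0,\pi/2]$, so Jordan's inequality $\sin(p\phi) \ge (2/\pi)p\phi$ bounds the arc contribution by a constant times $R^{q}\int_{0}^{\theta} e^{-(2/\pi)pR^{p}\phi}\,d\phi = O(R^{q-p})$, which tends to $0$ as $R \to \infty$ exactly because $q < p$. This is the step where the hypothesis $p > q$ is essential, and I expect it to be the main obstacle. On the inner arc $|z| = \varepsilon$, the bound $|f| \le C\varepsilon^{q-1}$ together with arc length $\varepsilon\theta$ yields a contribution $O(\varepsilon^{q}) \to 0$ as $\varepsilon \to 0$, using $q > 0$; this simultaneously shows that the integrand is harmless near the origin.

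Then I would evaluate the ray integral. Letting $\varepsilon \to 0$ and $R \to \infty$, Cauchy's theorem gives $\int_{0}^{\infty} e^{ix^{p}}x^{q-1}\,dx = \int_{0}^{\infty} f(re^{i\theta})\,e^{i\theta}\,dr$. Substituting $z = re^{i\theta}$ and using $iz^{p} = -r^{p}$ and $z^{q-1} = r^{q-1}e^{i(q-1)\theta}$, this equals $e^{iq\theta}\int_{0}^{\infty} e^{-r^{p}}r^{q-1}\,dr$. Finally the change of variable $t = r^{p}$ turns the remaining real integral into $p^{-1}\int_{0}^{\infty} e^{-t}t^{q/p-1}\,dt = p^{-1}\varGamma(q/p)$, and since $q\theta = (\pi/2)(q/p)$ I obtain $I_{p,q}^{+} = p^{-1}e^{i(\pi/2)(q/p)}\varGamma(q/p)$.

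To finish, the lower-sign case follows without a separate contour argument: for real $x$ the integrand $e^{-ix^{p}}x^{q-1}$ is the complex conjugate of $e^{ix^{p}}x^{q-1}$, so $I_{p,q}^{-} = \overline{I_{p,q}^{+}}$, and since $\varGamma(q/p)$ is real this yields $I_{p,q}^{-} = p^{-1}e^{-i(\pi/2)(q/p)}\varGamma(q/p)$, i.e. the claimed formula with the double signs in the same order.
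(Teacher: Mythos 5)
Your proposal is correct and follows essentially the same route as the paper's proof: Cauchy's theorem for $e^{iz^{p}}z^{q-1}$ on the truncated sector of opening $\pi/(2p)$, Jordan's inequality to kill the outer arc using $q<p$, a vanishing inner-arc estimate using $q>0$, reduction of the ray integral to $p^{-1}\varGamma(q/p)$ via $t=r^{p}$, and complex conjugation for the lower sign. The only cosmetic difference is that your inner-arc bound $|f|\leq C\varepsilon^{q-1}$ times arc length is a bit more direct than the paper's Jordan-plus-L'H\^{o}pital computation, and the paper additionally verifies convergence of the improper integral by a separate integration by parts, whereas in your argument it follows implicitly from the convergence of the other contour pieces.
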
 

\begin{proof} 
Since the lower side of double signs $\pm$ can be obtained as the conjugate of the upper one, 
we shall show the upper one. 
Suppose $p > q > 0$. 
Consider the following: 
\begin{align} 
C_{1} &:= \{ z=r \in \mathbb{C} | 0 < \varepsilon  \leq r \leq R \} , \notag \\ 
C_{2} &:= \{ z=R e^{i\theta} \in \mathbb{C} | 0 \leq \theta \leq \pi /2p \} , \notag \\ 
C_{3} &:= \{ z=-se^{i(\pi /2p)} \in \mathbb{C} | -R \leq s \leq -\varepsilon \} , \notag \\ 
C_{4} &:= \{ z=\varepsilon  e^{-i\tau} \in \mathbb{C} | -\pi /2p \leq \tau \leq 0 \} \notag 
\end{align} 
and a domain $D$ with the anticlockwise oriented boundary $\sum_{j=1}^{4} C_{j}$. 
Since $e^{iz^{p}} z^{q-1}$ is holomorphic in $D$ for $p > q > 0$, 
by Cauchy's integral theorem, 
\begin{align} 
0 
= \int_{\sum_{j=1}^{4} C_{j}} e^{iz^{p}} z^{q-1} dz 
= \sum_{j=1}^{4} \int_{C_{j}} e^{iz^{p}} z^{q-1} dz. 
\label{Cauchy} 
\end{align} 

As to $\int_{C_{2}} e^{iz^{p}} z^{q-1} dz$, 
by Jordan's inequality: $2/\pi < (\sin x)/x$ for $0 < x < \pi/2$, 
since $(2/\pi) p\theta < \sin (p\theta)$ for $0 < \theta < \pi/2p$, as $R \to \infty$, 
\begin{align} 
&\left| \int_{C_{2}} e^{iz^{p}} z^{q-1} dz \right| 
= \left| \int_{0}^{\frac{\pi}{2p}} e^{i(Re^{i\theta})^{p}} (Re^{i\theta})^{q-1} Rie^{i\theta} d\theta \right| 
\leq R^{q} \int_{0}^{\frac{\pi}{2p}} e^{-R^{p} \sin (p\theta)} d\theta \notag \\ 
&< R^{q} \int_{0}^{\frac{\pi}{2p}} e^{-R^{p} \frac{2}{\pi} p\theta} d\theta 
= R^{q} \left[ -R^{-p} \frac{\pi}{2p} e^{-R^{p} \frac{2}{\pi} p\theta} \right]_{0}^{\frac{\pi}{2p}} 
= \frac{\pi}{2p} \frac{1 - e^{-R^{p}}}{R^{p-q}} \to 0. 
\label{C_2_integral} 
\end{align} 

As to $\int_{C_{4}} e^{iz^{p}} z^{q-1} dz$, 
by change of variable $\tau = -\theta$, 
as $\varepsilon \to +0$, we similarly have 
\begin{align} 
&\left| \int_{C_{4}} e^{iz^{p}} z^{q-1} dz \right| 
= \left| \int_{-\frac{\pi}{2p}}^{0} e^{i(\varepsilon e^{-i\tau})^{p}} (\varepsilon e^{-i\tau})^{q-1} (-\varepsilon ie^{-i\tau}) d\tau \right| \notag \\ 
&= \left| \int_{\frac{\pi}{2p}}^{0} e^{i(\varepsilon e^{i\theta})^{p}} (\varepsilon e^{i\theta})^{q-1} (-\varepsilon ie^{i\theta}) (-d\theta) \right| 
\leq {\varepsilon }^{q} \int_{0}^{\frac{\pi}{2p}} e^{-{\varepsilon }^{p} \sin (p\theta)} d\theta \notag \\ 
&< {\varepsilon }^{q} \int_{0}^{\frac{\pi}{2p}} e^{-{\varepsilon }^{p} \frac{2}{\pi} p\theta} d\theta 
= {\varepsilon }^{q} \left[ -{\varepsilon }^{-p} \frac{\pi}{2p} e^{-{\varepsilon }^{p} \frac{2}{\pi} p\theta} \right]_{0}^{\frac{\pi}{2p}} 
= \frac{\pi}{2p} \frac{1 - e^{-\varepsilon^{p}}}{\varepsilon^{p-q}} \to 0. 
\label{C_4_integral} 
\end{align} 
Here we used L'H\^{o}pital's rule as follows, 
\begin{align} 
\lim_{\varepsilon \to +0} \frac{1 - e^{-\varepsilon^{p}}}{\varepsilon^{p-q}} 
= \lim_{\varepsilon \to +0} \frac{p\varepsilon^{p-1} e^{-\varepsilon^{p}}}{(p-q) \varepsilon^{p-q-1}} 
= \lim_{\varepsilon \to +0} \frac{p\varepsilon^{q} e^{-\varepsilon^{p}}}{p-q} 
= 0. 
\notag 
\end{align} 

Next put $f(x) = e^{ix^{p}} x^{q-1}$ for $x \in (0,\infty)$. 
Since $f$ is continuous on $(0,\infty)$, 
then $f$ is integrable on $[u,1]$ for any $u \in (0,1]$, and $f(x) = O(x^{\alpha})~(x \to +0)$ with $\alpha = q-1>-1$. 
Thus $\int_{0}^{1} e^{ix^{p}} x^{q-1} dx$ is absolutely convergent. 
And $f$ is also integrable on $[1,v]$ for any $v \in [1,\infty)$. 
Using $L := \frac{1}{px^{p-1}} \frac{1}{i} \frac{d}{dx}$, 
since $L(e^{ix^{p}}) = e^{ix^{p}}$ when $x \ne 0$, 
by integration by parts, 
\begin{align} 
&\int_{1}^{v} e^{ix^{p}} x^{q-1} dx 
= \int_{1}^{v} L(e^{ix^{p}}) x^{q-1} dx 
= \int_{1}^{v} \frac{1}{px^{p-1}} \frac{1}{i} \frac{d}{dx} (e^{ix^{p}}) x^{q-1} dx \notag \\ 
&= \int_{1}^{v} \frac{1}{ip} \frac{d}{dx} (e^{ix^{p}}) x^{q-p} dx 
= \frac{1}{ip} \left\{ \Big[ e^{ix^{p}} x^{q-p} \Big] _{1}^{v} - (q-p) \int_{1}^{v} e^{ix^{p}} x^{q-p-1} dx \right\}. 
\notag 
\end{align} 
Here $|e^{ix^{p}} x^{q-p}| \to 0$ as $x \to \infty$. 
Put $g(x) = e^{ix^{p}} x^{q-p-1} $. Since $g$ is continuous on $[1,\infty)$, 
then $g$ is integrable on $[1,v]$ for any $v \in [1,\infty)$, and $g(x) = O(x^{\beta})~(x \to \infty)$ with $\beta = q-p-1 < -1$. 
Thus $\int_{1}^{\infty} e^{ix^{p}} x^{q-p-1} dx$ is absolutely convergent. 
Hence 
\begin{align} 
\int_{1}^{\infty} e^{ix^{p}} x^{q-1} dx 
&= \frac{1}{ip} \left\{ -e^{i} - (q-p) \int_{1}^{\infty} e^{ix^{p}} x^{q-p-1} dx \right\} 
\label{int_0_infty_e_ixp_x_q-1} 
\end{align} 
is also absolutely convergent. 
Hence 
\begin{align} 
I_{p,q}^{+} 
:&= \int_{0}^{\infty} e^{ix^{p}} x^{q-1} dx 
= \int_{0}^{1} e^{ix^{p}} x^{q-1} dx + \int_{1}^{\infty} e^{ix^{p}} x^{q-1} dx 
\notag 
\end{align} 
are absolutely convergent. 
Therefore by \eqref{Cauchy}, \eqref{C_2_integral} and \eqref{C_4_integral}, 
\begin{align} 
I_{p,q}^{+} 
:&= \int_{0}^{\infty} e^{ix^{p}} x^{q-1} dx 
= \lim_{\substack{\varepsilon \to +0\\ R \to \infty}} \int_{\varepsilon}^{R} e^{ir^{p}} x^{r-1} dr 
= \lim_{\substack{\varepsilon \to +0\\ R \to \infty}} \int_{C_{1}} e^{iz^{p}} z^{q-1} dz \notag \\ 
&= -\lim_{\substack{\varepsilon \to +0\\ R \to \infty}} \int_{C_{3}} e^{iz^{p}} z^{q-1} dz 
= - \lim_{\substack{\varepsilon \to +0\\ R \to \infty}} \int_{-R}^{-\varepsilon} e^{i(-se^{i\frac{\pi}{2p}})^{p}} (-se^{i\frac{\pi}{2p}})^{q-1} (-e^{i\frac{\pi}{2p}} ds). \notag 
\end{align} 
Moreover, 
by change of variables $s = -r$ and $r=t^{1/p}$, 
\begin{align} 
&I_{p,q}^{+} 
= -\lim_{\substack{\varepsilon \to +0\\ R \to \infty}} \int_{R}^{\varepsilon} e^{i(re^{i\frac{\pi}{2p}})^{p}} (re^{i\frac{\pi}{2p}})^{q-1} (-e^{i\frac{\pi}{2p}}) (-dr) 
= e^{i\frac{\pi}{2} \frac{q}{p}} \lim_{\substack{\varepsilon \to +0\\ R \to \infty}} \int_{\varepsilon}^{R} e^{-r^{p}} r^{q-1} dr \notag \\ 
&= e^{i\frac{\pi}{2} \frac{q}{p}} \lim_{\substack{\varepsilon \to +0\\ R \to \infty}} \int_{\varepsilon^{p}}^{R^{p}} e^{-t} t^{\frac{q-1}{p}} p^{-1} t^{\frac{1}{p}-1} dt 
= p^{-1} e^{i\frac{\pi}{2} \frac{q}{p}} \int_{0}^{\infty} e^{-t} t^{\frac{q}{p}-1} dt 
= p^{-1} e^{i\frac{\pi}{2} \frac{q}{p}} \varGamma \left( \frac{q}{p} \right). \notag 
\end{align} 
\end{proof} 

When $q \geq p > 0$, 
we can make a sense of \eqref{I_pq} as oscillatory integrals. 
By Lemma \ref{L_star_l}, Theorem \ref{Lax02} in $\S 3$ and Lemma \ref{Generalized the Fresnel integrals}, 
we obtain the following theorem: 
\begin{thm}
\label{th01}
Assume that $p,q \in \mathbb{C}$. 
\begin{enumerate} 
\item[(i)] 
If $p > 0$ and $q > 0$, 
then 
\begin{align} 
\tilde{I}_{p,q}^{\pm} 
:= Os\mbox{-}\int_{0}^{\infty} e^{\pm ix^{p}} x^{q-1} dx 
= p^{-1} e^{\pm i\frac{\pi}{2} \frac{q}{p}} \varGamma \left( \frac{q}{p} \right). 
\label{generalized_Fresnel_integral_def} 
\end{align} 
\item[(ii)] 
The $\tilde{I}_{p,q}^{\pm}$ can be extended non-zero meromorphic on $\mathbb{C}$ with poles of order 1 at $q = -pj$ for $j \in \mathbb{N}$ as to $q$ for each $p \in \mathbb{C} \setminus \{ 0 \}$, 
and meromorphic on $\mathbb{C} \setminus \{ 0 \}$ with poles of order 1 at $p = -q/j$ for $j \in \mathbb{N}$ as to $p$ for each $q \in \mathbb{C}$ by analytic continuation, 
\end{enumerate} 
where double signs $\pm$ are in same order. 
We call $\tilde{I}^{\pm}_{p,q}$ ``generalized Fresnel integrals''. 
\end{thm}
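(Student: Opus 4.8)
The plan is to establish the closed form of part (i) first on the strip $0<q<p$, where the ordinary (improper) Fresnel computation of Lemma~\ref{Generalized the Fresnel integrals} is available, then to propagate it to all $q>0$, and finally to deduce (ii) from the analytic character of the answer. To begin I note that the constant amplitude $a\equiv1$ lies in $\mathcal{A}^{0}_{\delta}(\mathbb{R})$ for any admissible $\delta\in[-1,p-1)$ (nonempty since $p>0$), so Theorem~\ref{Lax02}~(iv) with $\lambda=1$ guarantees that $\tilde{I}^{\pm}_{p,q}$ exists independently of $\chi$ for all $p>0,\ q>0$ and, for each $l\ge l_{p,q}$, provides the absolutely convergent representation
\begin{align}
\tilde{I}^{+}_{p,q}=\int_{0}^{\infty}e^{ix^{p}}x^{q-1}\varphi(x)\,dx+\int_{0}^{\infty}e^{ix^{p}}L^{*l}\bigl(x^{q-1}\psi(x)\bigr)\,dx.\notag
\end{align}
As in the earlier proofs, the lower sign follows by complex conjugation, so I argue only for the upper one.

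For $0<q<p$ I would identify $\tilde{I}^{+}_{p,q}$ with the improper integral $I^{+}_{p,q}$ evaluated in Lemma~\ref{Generalized the Fresnel integrals}. The $\varphi$-term above is common to both and is genuinely (absolutely) convergent because $q>0$. For the complementary term I would integrate the convergent improper integral $\int_{0}^{\infty}e^{ix^{p}}x^{q-1}\psi(x)\,dx$ by parts $l$ times using $L(e^{ix^{p}})=e^{ix^{p}}$; this is the computation of Lemma~\ref{L_star_l}~(iv) with the cutoff $\chi_{\varepsilon}$ replaced by the constant $1$, the vanishing of the boundary terms and the absolute convergence of the $L^{*l}$-transformed integrand being supplied by the same power counting as in Lemma~\ref{L_star_l}~(ii),(iii) (the decay $x^{q-1-pl}$ at infinity is integrable once $l>q/p$). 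Taking one common value $l\ge l_{p,q}$ makes the two right-hand sides coincide, so $\tilde{I}^{+}_{p,q}=I^{+}_{p,q}=p^{-1}e^{i\frac{\pi}{2}\frac{q}{p}}\varGamma(q/p)$.

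To cover $q\ge p$ I would run the descent already contained in the proof of Theorem~\ref{Lax02}~(v). For the constant amplitude only the $j=0$ term survives (since $a^{(j)}\equiv0$ for $j\ge1$), and with $l_{0}=[q/p)$ it yields
\begin{align}
\tilde{I}^{+}_{p,q}=\Bigl(\frac{i}{p}\Bigr)^{l_{0}}\Bigl(\prod_{s=1}^{l_{0}}(q-ps)\Bigr)\,\tilde{I}^{+}_{p,\,q-pl_{0}},\notag
\end{align}
where the new index satisfies $q-pl_{0}\in(0,p]$. When $q-pl_{0}\in(0,p)$ the base case applies, and substituting $p^{-l_{0}}\prod_{s=1}^{l_{0}}(q-ps)=\prod_{s=1}^{l_{0}}(q/p-s)$ together with the functional equation $\varGamma(z+1)=z\varGamma(z)$ telescopes the product into $\varGamma(q/p)$, while the phase collapses through $i^{l_{0}}e^{i\frac{\pi}{2}(q/p-l_{0})}=e^{i\frac{\pi}{2}q/p}$, reproducing the formula. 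The sole exceptional case is $q-pl_{0}=p$, i.e. $q/p\in\mathbb{N}$, where the descent terminates at the second index $p$ and the factor $q-p$ would vanish; there I would appeal to the fact that $q\mapsto\tilde{I}^{+}_{p,q}$ is real-analytic on $(0,\infty)$ — differentiating the representation of the first paragraph under the integral sign — so that, the formula already holding on the dense set $(0,\infty)\setminus p\mathbb{N}$, it extends to all $q>0$ by continuity. I expect this reconciliation — matching the oscillatory regularization with the merely conditionally convergent improper integral, and disposing of the integer ratios $q/p$ — to be the main point requiring care; the remaining manipulations are routine bookkeeping.

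Part (ii) then follows formally. Put $F^{\pm}(p,q):=p^{-1}e^{\pm i\frac{\pi}{2}q/p}\varGamma(q/p)$; by part (i) it agrees with $\tilde{I}^{\pm}_{p,q}$ on the real domain $\{p>0,\ q>0\}$, and since any two meromorphic continuations must coincide on this ray and hence everywhere, $F^{\pm}$ is the claimed extension. For fixed $p\in\mathbb{C}\setminus\{0\}$ the prefactor $p^{-1}e^{\pm i\frac{\pi}{2}q/p}$ is entire and nowhere zero, while $\varGamma(q/p)$ is meromorphic in $q$ with no zeros and with simple poles exactly where $q/p$ is a non-positive integer, i.e. at $q=-pj$; hence $q\mapsto F^{\pm}(p,q)$ is a non-zero meromorphic function on $\mathbb{C}$ with only these simple poles. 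For fixed $q$ the map $p\mapsto q/p$ is holomorphic and non-zero on $\mathbb{C}\setminus\{0\}$, so the value $q/p=0$ is never attained and the poles of $\varGamma(q/p)$ occur precisely at $q/p=-j$, $j\in\mathbb{N}$, that is at the simple poles $p=-q/j$; non-vanishing and order one follow as before. This is exactly the pole structure asserted in (ii).
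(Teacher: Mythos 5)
Your proposal is correct and follows the paper's overall strategy: identify $\tilde{I}^{+}_{p,q}$ with the improper integral of Lemma \ref{Generalized the Fresnel integrals} on the range $0<q<p$, descend from $q\geq p$ to that range by the $L^{*}$-machinery of Lemma \ref{L_star_l} and Theorem \ref{Lax02} (only the $j=0$ term surviving the limit $\varepsilon\to+0$), telescope with $\varGamma(z+1)=z\varGamma(z)$, and read off (ii) from the known meromorphy of $\varGamma$. The one place where you genuinely deviate is the integer-ratio case $q\in p\mathbb{N}$, i.e.\ $q-pl_{0}=p$. The paper disposes of it by a direct one-line computation, $\tilde{I}^{+}_{p,p}=\lim_{\varepsilon\to+0}\frac{1}{ip}\bigl(-1-\int_{0}^{\infty}e^{ix^{p}}\chi'_{\varepsilon}(x)\,dx\bigr)=i/p=p^{-1}e^{i\frac{\pi}{2}}\varGamma(1)$, using integration by parts and Theorem \ref{Lax02} (iii); you instead invoke continuity of $q\mapsto\tilde{I}^{+}_{p,q}$ and density of $(0,\infty)\setminus p\mathbb{N}$. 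Your route is legitimate, but the continuity claim is an extra obligation: it requires dominating the representation $\int_{0}^{\infty}e^{ix^{p}}x^{q-1}\varphi\,dx+\int_{0}^{\infty}e^{ix^{p}}L^{*l}(x^{q-1}\psi)\,dx$ uniformly for $q$ in a compact set with a single $l\geq\sup l_{p,q}$, which is available from the estimates in the proof of Theorem \ref{Lax02} (ii) but should be stated; the paper's direct evaluation of $\tilde{I}^{+}_{p,p}$ is more elementary and avoids this. Note also that your stated reason for the exceptional case is slightly off: when $q=p(l_{0}+1)$ the product $\prod_{s=1}^{l_{0}}(q-ps)=p^{l_{0}}l_{0}!$ does not vanish; the actual obstruction is only that Lemma \ref{Generalized the Fresnel integrals} requires the strict inequality $p>q$, so the descent lands on $\tilde{I}^{+}_{p,p}$, which the base case does not cover. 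This does not affect the validity of your workaround. Part (ii) is argued exactly as in the paper.
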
 

\begin{proof} 
Since the lower side of double signs $\pm$ can be obtained as the conjugate of the upper one, 
we shall show the upper one. 
Since $a \equiv 1 \in \mathcal{A}^{0}_{-1}(\mathbb{R})$, 
we can use Theorem \ref{Lax02} in $\S 3$ when $\lambda = 1$, $k=0$ and $a \equiv 1$. 

(i) 
Suppose $p > 0$ and $q > 0$. 
Let $\chi \in \mathcal{S}(\mathbb{R})$ with $\chi(0) = 1$, $0 < \varepsilon < 1$ and $\chi_{\varepsilon}(x) := \chi(\varepsilon x)$ for $x \in \mathbb{R}$. 
By Theorem \ref{Lax02} (iv) in $\S 3$, there exists the following oscillatory integral: 
\begin{align} 
\tilde{I}_{p,q}^{+} 
:= \tilde{I}_{p,q}^{+}[1](1) 
:= Os\text{-}\int_{0}^{\infty} e^{ix^{p}} x^{q-1} dx 
:= \lim_{\varepsilon \to +0} \int_{0}^{\infty} e^{ix^{p}} x^{q-1} \chi_{\varepsilon}(x) dx. 
\notag 
\end{align} 

When $p > q$, put $f(x) = e^{ix^{p}} x^{q-1} \chi_{\varepsilon}(x)$. 
Since $f$ is continuous on $(0,\infty)$, then $f$ is integrable on $[u,1]$ for any $u \in (0,1]$, 
and $f(x) = O(x^{\alpha})~(x \to +0)$ with $\alpha = q-1 > -1$. 
Thus $\int_{0}^{1} e^{ix^{p}} x^{q-1} \chi_{\varepsilon}(x) dx$ is absolutely convergent. 
By Proposition \ref{chi epsilon} (ii) in \S 2, there exists a positive constant $C_{0}$ independent of $0 < \varepsilon < 1$ such that for any $x \in (0,1]$, 
\begin{align} 
| f(x) | 
= | e^{ix^{p}} x^{q-1} \chi_{\varepsilon}(x) | 
\leq C_{0} |x|^{q-1} =: M(x). 
\notag 
\end{align} 
Since $M$ is continuous on $(0,1]$, then $M$ is integrable on $[u,1]$ for any $u \in (0,1]$, 
and $M(x) = O(x^{\alpha})~(x \to 0)$ with $\alpha = q-1 > -1$. 
Thus $\int_{0}^{1} M(x) dx$ is absolutely convergent independent of $\chi_{\varepsilon}$. 
Hence by Lebesgue's convergence theorem and Proposition \ref{chi epsilon} (i) in \S 2, 
there exists the following oscillatory integral, and the following holds: 
\begin{align} 
Os\text{-}\int_{0}^{1} e^{ix^{p}} x^{q-1} dx 
:= \lim_{\varepsilon \to +0} \int_{0}^{1} e^{ix^{p}} x^{q-1} \chi_{\varepsilon}(x) dx 
= \int_{0}^{1} e^{ix^{p}} x^{q-1} dx. 
\label{Os_0_1} 
\end{align} 
And $f$ is also integrable on $[1,v]$ for any $v \in [1,\infty)$, 
and $f(x) = O(x^{\beta})~(x \to \infty)$ with $\beta = q-1-m$ for any $m \in \mathbb{Z}_{\geq 0}$. 
Here let $m = [q]+1$. 
Since $x-[x]-1<0$ for $x \in \mathbb{R}$, then $\beta < -1$. 
Thus $\int_{1}^{\infty} e^{ix^{p}} x^{q-1} \chi_{\varepsilon}(x) dx$ is absolutely convergent. 
By integration by parts, 
\begin{align} 
&\int_{1}^{\infty} e^{ix^{p}} x^{q-1} \chi_{\varepsilon}(x) dx 
= \lim_{v \to \infty} \int_{1}^{v} e^{ix^{p}} x^{q-1} \chi_{\varepsilon}(x) dx \notag \\ 
&= \lim_{v \to \infty} \int_{1}^{v} \frac{1}{px^{p-1}} \frac{1}{i} \frac{d}{dx} (e^{ix^{p}}) x^{q-1} \chi_{\varepsilon}(x) dx \notag \\ 
&= \lim_{v \to \infty} \int_{1}^{v} \frac{1}{ip} \frac{d}{dx} (e^{ix^{p}}) x^{q-p} \chi_{\varepsilon}(x) dx 
= \lim_{v \to \infty} \frac{1}{ip} \bigg\{ \Big[ e^{ix^{p}} x^{q-p} \chi_{\varepsilon}(x) \Big] _{1}^{v} \bigg. \notag \\ 
&\bigg. \hspace{0.5cm}- (q-p) \int_{1}^{v} e^{ix^{p}} x^{q-p-1} \chi_{\varepsilon}(x) dx - \int_{1}^{v} e^{ix^{p}} x^{q-p} \chi'_{\varepsilon}(x) dx \bigg\}. 
\label{int_1_infty_e_ix_p_x_q-1_chi_varepsilon_x_dx_01} 
\end{align} 
Here $| e^{ix^{p}} x^{q-p} \chi_{\varepsilon}(x) | \to 0$ as $x \to \infty$. 
And put $g_{j}(x) = e^{ix^{p}} x^{q-p-1+j} \chi^{(j)}_{\varepsilon}(x)$ for $j=0,1$. 
Since $g_{j}$ is continuous on $[1,\infty)$, then $g_{j}$ is integrable on $[1,v]$ for any $v \in [1,\infty)$, 
and $g_{j}(x) = O(x^{\beta})~(x \to \infty)$ with $\beta = q-p-1+j-m$ for any $m \in \mathbb{Z}_{\geq 0}$. 
Here let $m = j$. Then $\beta < -1$. 
Thus $\int_{1}^{\infty} e^{ix^{p}} x^{q-p-1+j} \chi^{(j)}_{\varepsilon}(x) dx$ is absolutely convergent for $j=0,1$. 
Hence by \eqref{int_1_infty_e_ix_p_x_q-1_chi_varepsilon_x_dx_01}, 
\begin{align} 
&\int_{1}^{\infty} e^{ix^{p}} x^{q-1} \chi_{\varepsilon}(x) dx \notag \\ 
&= \frac{1}{ip} \left\{ -e^{i}\chi_{\varepsilon}(1) - (q-p) \int_{1}^{\infty} e^{ix^{p}} x^{q-p-1} \chi_{\varepsilon}(x) dx - \int_{1}^{\infty} e^{ix^{p}} x^{q-p} \chi'_{\varepsilon}(x) dx \right\}. 
\label{int_1_infty_e_ix_p_x_q-1_chi_varepsilon_x_dx_02} 
\end{align} 
Here noting $|x| = (|x|^{2})^{1/2} \leq \langle x \rangle$, 
by Proposition \ref{chi epsilon} (ii) in \S 2, 
for each $j=0,1$, there exists a positive constant $C_{j}$ independent of $0 < \varepsilon < 1$ such that for any $x \in [1,\infty)$, 
\begin{align} 
| g_{j}(x) | 
= | e^{ix^{p}} x^{q-p-1+j} \chi^{(j)}_{\varepsilon}(x) | 
\leq |x|^{q-p-1+j} C_{j} \langle x \rangle^{-j} 
\leq C_{j} |x|^{q-p-1} =: M_{j}(x). 
\notag 
\end{align} 
Since $M_{j}$ is continuous on $[1,\infty)$, then $M_{j}$ is integrable on $[1,v]$ for any $v \in [1,\infty)$, 
and $M_{j}(x) = O(x^{\beta})~(x \to \infty)$ with $\beta = q-p-1 < -1$. 
Thus $\int_{1}^{\infty} M_{j}(x) dx$ is absolutely convergent independent of $\chi_{\varepsilon}$ for $j=0,1$. 
Hence by \eqref{int_1_infty_e_ix_p_x_q-1_chi_varepsilon_x_dx_02}, Lebesgue's convergence theorem, and Proposition \ref{chi epsilon} (i) and (iii) in \S 2, there exists the following oscillatory integral, 
and by \eqref{int_0_infty_e_ixp_x_q-1}, the following holds: 
\begin{align} 
&Os\text{-}\int_{1}^{\infty} e^{ix^{p}} x^{q-1} dx 
:= \lim_{\varepsilon \to +0} \int_{1}^{\infty} e^{ix^{p}} x^{q-1} \chi_{\varepsilon}(x) dx \notag \\ 
&= \frac{1}{ip} \left\{ -e^{i} - (q-p) \int_{1}^{\infty} e^{ix^{p}} x^{q-p-1} dx \right\} 
= \int_{1}^{\infty} e^{ix^{p}} x^{q-1} dx. 
\label{Os_1_infty} 
\end{align} 
Therefore by \eqref{Os_0_1}, \eqref{Os_1_infty} and \eqref{I_pq}, 
\begin{align} 
I_{p,q}^{+} 
:= Os\text{-}\int_{0}^{\infty} e^{ix^{p}} x^{q-1} dx 
= \int_{0}^{\infty} e^{ix^{p}} x^{q-1} dx 
= p^{-1} e^{i\frac{\pi}{2} \frac{q}{p}} \varGamma \left( \frac{q}{p} \right). 
\label{Generalized Fresnel Oscillatory integral p > q > 0} 
\end{align} 

When $q=p$, 
by integration by parts, Lemma \ref{L_star_l} (ii), and Theorem \ref{Lax02} (iii) in $\S 3$, 
\begin{align} 
\tilde{I}_{p,p}^{+} 
&= \lim_{\varepsilon \to +0} \lim_{\substack{u \to +0\\ v \to \infty}} \int_{u}^{v} e^{ix^{p}} x^{p-1} \chi_{\varepsilon}(x) dx 
= \lim_{\varepsilon \to +0} \lim_{\substack{u \to +0\\ v \to \infty}} \int_{u}^{v} \frac{1}{ip} \frac{d}{dx} (e^{ix^{p}}) \chi_{\varepsilon}(x) dx \notag \\ 
&= \lim_{\varepsilon \to +0} \lim_{\substack{u \to +0\\ v \to \infty}} \frac{1}{ip} \left( \Big[ e^{ix^{p}} \chi_{\varepsilon}(x) \Big]_{u}^{v} 
- \int_{u}^{v} e^{ix^{p}} \chi'_{\varepsilon}(x) dx \right) \notag \\ 
&= \lim_{\varepsilon \to +0} \frac{1}{ip} \left( -1 - \int_{0}^{\infty} e^{ix^{p}} \chi'_{\varepsilon}(x) dx \right) 
= \frac{i}{p} 
= p^{-1} e^{i\frac{\pi}{2} \frac{p}{p}} \varGamma \left( \frac{p}{p} \right). 
\label{I_pp} 
\end{align} 

When $q>p$, 
let $l_{0} = [q/p)$. Since $(q/p)-1 \leq l_{0} < q/p$, then $0 <q-pl_{0} \leq p$. 
By Lemma \ref{L_star_l} (iv) and (i), and Theorem \ref{Lax02} (iii) in $\S 3$, 
\begin{align} 
\tilde{I}_{p,q}^{+} 
&= \lim_{\varepsilon \to +0} \int_{0}^{\infty} e^{ix^{p}} x^{q-1} \chi_{\varepsilon}(x) dx 
= \lim_{\varepsilon \to +0} \int_{0}^{\infty} e^{ix^{p}} L^{*l_{0}}(x^{q-1} \chi_{\varepsilon}(x)) dx \notag \\ 
&= \lim_{\varepsilon \to +0} \int_{0}^{\infty} e^{ix^{p}} \left( \frac{i}{p} \right)^{l_{0}} \sum_{j=0}^{l_{0}} C_{l_{0},j} x^{q-1-pl_{0}+j} \chi_{\varepsilon}^{(j)}(x) dx \notag \\ 
&= \left( \frac{i}{p} \right)^{l_{0}} C_{l_{0},0} \lim_{\varepsilon \to +0} \int_{0}^{\infty} e^{ix^{p}} x^{q-pl_{0}-1} \chi_{\varepsilon}(x) dx 
= \left( \frac{i}{p} \right)^{l_{0}} \prod_{s=1}^{l_{0}} (q-ps) \tilde{I}_{p,q-pl_{0}}^{+}. 
\label{I_pq02} 
\end{align} 
When $q-pl_{0} = p$, that is, $q=p(l_{0}+1)$, then by \eqref{I_pq02} and \eqref{I_pp}, 
\begin{align} 
\tilde{I}_{p,p(l_{0}+1)}^{+} 
&= \left( \frac{i}{p} \right)^{l_{0}} \prod_{s=1}^{l_{0}} \{ p(l_{0}+1)-ps \} \tilde{I}_{p,p}^{+} 
= i^{l_{0}} l_{0}! \frac{i}{p} 
= p^{-1} i^{l_{0}+1} l_{0}! \notag \\ 
&= p^{-1} e^{i\frac{\pi}{2} (l_{0}+1)} \varGamma (l_{0}+1) 
= p^{-1} e^{i\frac{\pi}{2} \frac{q}{p}} \varGamma \left( \frac{q}{p} \right). 
\notag 
\end{align} 
When $q-pl_{0} < p$, then 
by \eqref{I_pq02} and \eqref{Generalized Fresnel Oscillatory integral p > q > 0}, 
\begin{align} 
\tilde{I}_{p,q}^{+} 
&= \left( \frac{i}{p} \right)^{l_{0}} \prod_{s=1}^{l_{0}} (q-ps) I_{p,q-pl_{0}}^{+}  
= i^{l_{0}} \prod_{s=1}^{l_{0}} \left( \frac{q-ps}{p} \right) p^{-1} e^{i\frac{\pi}{2} \frac{q-pl_{0}}{p}} \varGamma \left( \frac{q-pl_{0}}{p} \right) \notag \\ 
&= p^{-1} e^{i\frac{\pi}{2} l_{0}} e^{i\frac{\pi}{2} \left( \frac{q}{p} - l_{0} \right)} \prod_{s=1}^{l_{0}} \left( \frac{q}{p}-s \right) \varGamma \left( \frac{q}{p} - l_{0} \right) 
= p^{-1} e^{i\frac{\pi}{2} \frac{q}{p}} \varGamma \left( \frac{q}{p} \right). 
\notag 
\end{align} 

(ii) 
Since $e^{\pm i\frac{\pi}{2} z}$ are non-zero holomorphic on $\mathbb{C}$, 
since $\varGamma (z)$ can be extended non-zero meromorphic on $\mathbb{C}$ with poles of order 1 at $z = -j$ for $j \in \mathbb{N}$ by analytic continuation, 
since $f(q) = q/p$ is holomorphic on $\mathbb{C}$ for each $p \in \mathbb{C} \setminus \{ 0 \}$, 
and since $g(p) = q/p$ is holomorphic on $\mathbb{C} \setminus \{ 0 \}$ for each $q \in \mathbb{C}$, 
then $\tilde{I}_{p,q}^{+} = p^{-1} e^{i\frac{\pi}{2} f(q)} \varGamma (f(q))$ can be extended non-zero meromorphic on $\mathbb{C}$ with poles of order 1 at $q = -pj$ for $j \in \mathbb{N}$ as to $q$ for each $p \in \mathbb{C} \setminus \{ 0 \}$, 
and $\tilde{I}_{p,q}^{+} = p^{-1} e^{i\frac{\pi}{2} g(p)} \varGamma (g(p))$ can be extended meromorphic on $\mathbb{C} \setminus \{ 0 \}$ with poles of order 1 at $p = -q/j$ for $j \in \mathbb{N}$ as to $p$ for each $q \in \mathbb{C}$. 
\end{proof} 

Using the theorem above, 
we can extend the Euler Beta function as follows. 
\begin{prop} 
Assume that $p_{j} > 0$ and $q_{j} 
\in \mathbb{C} \setminus \{ -p_{j}\mathbb{N} \}$ for $j=1,2,3$. 
Let 
\begin{align} 
\tilde{B}^{\pm}(p_{1},p_{2},p_{3};q_{1},q_{2},q_{3}) 
:= e^{\mp i \frac{\pi}{2} \left( \frac{q_{1}}{p_{1}} + 
\frac{q_{2}}{p_{2}} - \frac{q_{3}}{p_{3}} \right)} 
\frac{p_{1} p_{2}}{p_{3}} \frac{\tilde{I}_{p_{1},q_{1}}^{\pm} 
\tilde{I}_{p_{2},q_2}^{\pm}}{\tilde{I}_{p_{3},q_{3}}^{\pm}}. \notag 
\end{align} 
Then 
\begin{align} 
\tilde{B}^{\pm}(1,1,1;q_{1},q_{2},q_{1}+q_{2}) = B(q_{1},q_{2}), \notag 
\end{align} 
where $B(x,y)$ is the Euler Beta function, 
$\tilde{I}_{p_{1},q_{1}}^{\pm}, \tilde{I}_{p_{2},q_2}^{\pm}$ and $\tilde{I}_{p_{3},q_{3}}^{\pm}$ are generalized Fresnel integrals defined by \eqref{generalized_Fresnel_integral_def}, 
and double signs $\pm$ are in same order. 
\end{prop}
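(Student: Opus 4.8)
The plan is to reduce the claim to a direct substitution into the closed form for the generalized Fresnel integrals established in Theorem \ref{th01}. First I would recall that, by Theorem \ref{th01} (i) together with the meromorphic continuation in Theorem \ref{th01} (ii), one has
\begin{align}
\tilde{I}_{p,q}^{\pm} = p^{-1} e^{\pm i\frac{\pi}{2}\frac{q}{p}} \varGamma\!\left(\frac{q}{p}\right), \notag
\end{align}
and in particular for $p=1$ this specializes to $\tilde{I}_{1,q}^{\pm} = e^{\pm i\frac{\pi}{2}q}\varGamma(q)$, valid for all $q \in \mathbb{C}\setminus(-\mathbb{N})$. The hypothesis $q_{j} \in \mathbb{C}\setminus\{-p_{j}\mathbb{N}\}$ guarantees that each of $\tilde{I}_{1,q_{1}}^{\pm}$, $\tilde{I}_{1,q_{2}}^{\pm}$ and $\tilde{I}_{1,q_{1}+q_{2}}^{\pm}$ is defined; moreover, since $\tilde{I}_{p,q}^{\pm}$ is non-zero wherever holomorphic, the denominator $\tilde{I}_{1,q_{1}+q_{2}}^{\pm}$ does not vanish, so the quotient in the definition of $\tilde{B}^{\pm}$ makes sense.

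Next I would set $p_{1}=p_{2}=p_{3}=1$ and $q_{3}=q_{1}+q_{2}$ in the definition of $\tilde{B}^{\pm}$. The scalar prefactor $\tfrac{p_{1}p_{2}}{p_{3}}$ becomes $1$, and the argument of the outer exponential is $\mp i\tfrac{\pi}{2}\bigl(q_{1}+q_{2}-(q_{1}+q_{2})\bigr)=0$, so that exponential equals $1$. It then remains to evaluate the quotient $\tilde{I}_{1,q_{1}}^{\pm}\tilde{I}_{1,q_{2}}^{\pm}/\tilde{I}_{1,q_{1}+q_{2}}^{\pm}$.

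Substituting the closed form, the exponential factors combine as $e^{\pm i\frac{\pi}{2}q_{1}}e^{\pm i\frac{\pi}{2}q_{2}}/e^{\pm i\frac{\pi}{2}(q_{1}+q_{2})}=1$, leaving exactly $\varGamma(q_{1})\varGamma(q_{2})/\varGamma(q_{1}+q_{2})$. Recognizing this as the standard Gamma-function representation of the Euler Beta function $B(q_{1},q_{2})$ completes the argument. There is no substantial obstacle here: the content of the statement lies entirely in Theorem \ref{th01}, and once that closed form is in hand the proposition is a one-line cancellation; the only point requiring a word of care is checking that the admissibility conditions on the $q_{j}$ keep every factor finite and the denominator non-vanishing, which I would state explicitly.
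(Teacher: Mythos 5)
Your proposal is correct and follows essentially the same route as the paper: both set $p_{1}=p_{2}=p_{3}=1$, $q_{3}=q_{1}+q_{2}$, invoke the closed form $\tilde{I}_{1,q}^{\pm}=e^{\pm i\frac{\pi}{2}q}\varGamma(q)$ from Theorem \ref{th01} (extended by (ii) to all admissible complex $q$), and cancel the exponential factors to obtain $\varGamma(q_{1})\varGamma(q_{2})/\varGamma(q_{1}+q_{2})=B(q_{1},q_{2})$. Your explicit remarks on well-definedness and the non-vanishing of the denominator are a minor addition the paper leaves implicit, but the argument is the same.
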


\begin{proof} 
If $p_{j} = 1$ for $j=1,2,3$, 
since $q_{1}+q_{2} \in \mathbb{C} \setminus \{ -\mathbb{N} \}$, 
by Theorem \ref{th01} (ii), 
\begin{align} 
\tilde{B}^{\pm}(1,1,1;q_{1},q_{2},q_{1}+q_{2}) 
&= \frac{\tilde{I}_{1,q_{1}}^{\pm} \tilde{I}_{1,q_2}^{\pm}}{\tilde{I}_{1,q_{1}+q_{2}}^{\pm}} 
= \frac{e^{\mp i \frac{\pi}{2} q_{1}} \tilde{I}_{1,q_{1}}^{\pm} \cdot e^{\mp i \frac{\pi}{2} q_{2}} \tilde{I}_{1,q_{2}}^{\pm}}
{e^{\mp i \frac{\pi}{2} (q_{1}+q_{2}) } \tilde{I}_{1,q_{1}+q_{2}}^{\pm}} \notag \\ 
&= \frac{\varGamma (q_{1}) \varGamma (q_{2})}{\varGamma (q_{1}+q_{2})} 
= B(q_{1},q_{2}). \notag 
\end{align} 
\end{proof} 

\section{Applications to asymptotic expansions} 

In this section, 
we consider applications of generalized Fresnel integrals to asymptotic expansion, which gives an extension of the stationary phase method in one variable. 

First by \eqref{generalized_Fresnel_integral_def} in \S 4, we define the following generalized Fresnel integrals: 
\begin{defn} 
\label{generalized_Fresnel_pm_pm_m} 
Let $m,k \in \mathbb{N}$. Then we define the generalized Fresnel integrals $\tilde{I}_{m,k}^{\pm \pm_{m}}$ as follows: 
\begin{align} 
\tilde{I}_{m,k}^{\pm \pm_{m}} 
:= Os\mbox{-}\int_{0}^{\infty} e^{\pm (-1)^{m} ix^{m}} x^{k-1} dx 
= m^{-1} e^{\pm (-1)^{m} i\frac{\pi}{2} \frac{k}{m}} \varGamma \left( \frac{k}{m} \right), 
\label{generalized_Fresnel_integral_def_pm_pm_m_type} 
\end{align} 
where double signs $\pm$ are in same order. 
\end{defn} 

Next by Theorem \ref{Lax02} in $\S 3$ and Theorem \ref{th01} in $\S 4$, 
we obtain the following theorem: 
\begin{thm} 
\label{th02} 
Assume that $\lambda > 0$ and $p>0$. 
Let $a \in \mathcal{A}^{\tau}_{\delta}(\mathbb{R})$. 
Then the following hold: 
 \begin{enumerate} 
\item[(i)] 
For any $N \in \mathbb{N}$ such that $N \geq p+1$, 
\begin{align} 
\tilde{I}_{p,1}^{\pm}[a](\lambda) 
:= Os\text{-}\int_{0}^{\infty} e^{\pm i\lambda x^{p}} a(x) dx 
= \sum_{k=0}^{N-[p]-1} \tilde{I}_{p,k+1}^{\pm} \frac{a^{(k)}(0)}{k!} \lambda ^{-\frac{k+1}{p}} + R_{N}^{\pm}(\lambda) 
\notag 
\end{align} 
and 
\begin{align} 
R_{N}^{\pm}(\lambda) 
:= &\sum_{k=N-[p]}^{N-1} \tilde{I}_{p,k+1}^{\pm} \frac{a^{(k)}(0)}{k!} \lambda ^{-\frac{k+1}{p}} 
+ \frac{1}{N!} Os\text{-} \int_{0}^{\infty} e^{\pm i\lambda x^{p}} x^{N} a^{(N)} (\theta x) dx, \notag 
\end{align} 
where $0 < \theta < 1$ and 
$\tilde{I}_{p,k+1}^{\pm}$ are generalized Fresnel integrals defined by \eqref{generalized_Fresnel_integral_def} in \S 4. 
And then there exists a positive constant $C_{N}$ such that for any $\lambda \geq 1$, 
\begin{align} 
| R_{N}^{\pm}(\lambda) | \leq C_{N} (\max_{k < N} |a^{(k)}(0)| + |a|^{(\tau)}_{N+l_{0}+ l_{p,N+1}}) \lambda^{-\frac{N-p+1}{p}}, 
\notag 
\end{align} 
where $l_{0}:=[(N+1)/p)$ and $l_{p,N+1} := [(N+1+\tau)^{+}/(p-1-\delta)]+1$. 

\item[(ii)] 
If $p=m \in \mathbb{N}$, 
then for any $N \in \mathbb{N}$ such that $N > m$, 
\begin{align} 
\tilde{J}_{m}^{\pm}[a](\lambda) 
:= Os\text{-}\int_{-\infty}^{\infty} e^{\pm i\lambda x^{m}} a(x) dx 
= \sum_{k=0}^{N-m-1} \tilde{c}_{k}^{\pm} \frac{a^{(k)}(0)}{k!} \lambda ^{-\frac{k+1}{m}} + \tilde{R}_{N}^{\pm}(\lambda) 
\notag 
\end{align} 
and 
\begin{align} 
\tilde{R}_{N}^{\pm}(\lambda) 
:= &\sum_{k=N-m}^{N-1} \tilde{c}_{k}^{\pm} \frac{a^{(k)}(0)}{k!} \lambda ^{-\frac{k+1}{m}} 
+ \frac{1}{N!} Os\text{-} \int_{-\infty}^{\infty} e^{\pm i\lambda x^{m}} x^{N} a^{(N)} (\theta x) dx, \notag 
\end{align} 
where $0 < \theta < 1$ and 
\begin{align} 
\tilde{c}_{k}^{\pm} 
:&=\tilde{I}_{m,k+1}^{\pm} + (-1)^{k} \tilde{I}_{m,k+1}^{\pm \pm_{m}} \notag \\ 
&= m^{-1} \left\{ e^{\pm i\frac{\pi}{2} \frac{k+1}{m}} + (-1)^{k} e^{\pm (-1)^{m} i\frac{\pi}{2} \frac{k+1}{m}} \right\} \varGamma \left( \frac{k+1}{m} \right). 
\notag 
\end{align} 
And then there exists a positive constant $\tilde{C}_{N}$ such that for any $\lambda \geq 1$, 
\begin{align} 
| \tilde{R}_{N}^{\pm}(\lambda) | \leq \tilde{C}_{N} (\max_{k < N} |a^{(k)}(0)| + |a|^{(\tau)}_{N+l_{0}+ l_{m,N+1}}) \lambda^{-\frac{N-m+1}{m}}, 
\notag 
\end{align} 
where $l_{0}:=[(N+1)/m)$ and $l_{m,N+1} := [(N+1+\tau)^{+}/(m-1-\delta)]+1$, 
\end{enumerate} 
where double signs $\pm$ are in same order. 
\end{thm}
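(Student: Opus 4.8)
The plan is to reduce both statements to Taylor's formula at the origin combined with the linearity and scaling behaviour of the oscillatory integral, so that the principal terms become the monomial integrals evaluated in Theorem \ref{th01} (i) and the remainder is controlled by the decay estimate of Theorem \ref{Lax02} (v). For part (i) I would fix $N \geq p+1$ and write $a(x) = \sum_{k=0}^{N-1} \frac{a^{(k)}(0)}{k!} x^k + \frac{x^N}{N!} b_N(x)$. To avoid the $x$-dependence of the Lagrange point I would take the integral form $b_N(x) = N \int_0^1 (1-t)^{N-1} a^{(N)}(tx)\,dt$, which is of class $C^\infty$ and, by the mean value theorem, equals $a^{(N)}(\theta x)$ for some $\theta \in (0,1)$ as written in the statement. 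Since $\langle tx \rangle \leq \langle x \rangle$ for $t \in [0,1]$, differentiating under the integral sign shows that $b_N \in \mathcal{A}^{\tau'}_{\delta}(\mathbb{R})$ for a suitable $\tau'$ and that its seminorms $|b_N|^{(\tau')}_{l}$ (in the sense of Definition \ref{A_tau_delta}) are dominated by $|a|^{(\tau)}_{N+l}$.

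Next I would insert this expansion into $\tilde{I}_{p,1}^{\pm}[a](\lambda) = Os\text{-}\int_0^\infty e^{\pm i\lambda x^p} a(x)\,dx$, whose existence is guaranteed by Theorem \ref{Lax02} (iv). By linearity of the oscillatory integral this splits into $\sum_{k=0}^{N-1}\frac{a^{(k)}(0)}{k!}\, Os\text{-}\int_0^\infty e^{\pm i\lambda x^p} x^k\,dx$ plus $\frac{1}{N!}\, Os\text{-}\int_0^\infty e^{\pm i\lambda x^p}x^N b_N(x)\,dx$. Each monomial integral is evaluated by the substitution $x = \lambda^{-1/p} y$, which turns $\lambda x^p$ into $y^p$ and yields $Os\text{-}\int_0^\infty e^{\pm i\lambda x^p}x^k\,dx = \lambda^{-(k+1)/p}\,\tilde{I}_{p,k+1}^{\pm}$, where $\tilde{I}_{p,k+1}^{\pm}$ is the generalized Fresnel integral of Theorem \ref{th01} (i). I would then separate the sum at $k = N-[p]$: since $[p] \leq p$, for $k \geq N-[p]$ one has $(k+1)/p \geq (N-p+1)/p$, so for $\lambda \geq 1$ these terms are $O(\lambda^{-(N-p+1)/p})$ and are collected, together with the $b_N$-integral, into $R_N^{\pm}(\lambda)$, leaving exactly the main sum over $k = 0,\dots,N-[p]-1$.

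The remainder estimate is where the real work lies. For the $b_N$-integral I would apply Theorem \ref{Lax02} (v) with $q = N+1$; the hypothesis $q > p$ holds because $N \geq p+1$, so this term is $O(\lambda^{-(N+1-p)/p}) = O(\lambda^{-(N-p+1)/p})$, while the finitely many sum-terms with $N-[p]\leq k \leq N-1$ contribute the factor $\max_{k<N}|a^{(k)}(0)|$. The main obstacle is precisely the seminorm bookkeeping: one must verify that the integral-form remainder $b_N$ genuinely lands in a class of the form $\mathcal{A}^{\tau'}_{\delta}(\mathbb{R})$ and that its seminorms up to order $l_0 + l_{p,N+1}$ are dominated by those of $a$ up to order $N + l_0 + l_{p,N+1}$, so that the constant in Theorem \ref{Lax02} (v) reorganizes into the stated $C_N(\max_{k<N}|a^{(k)}(0)| + |a|^{(\tau)}_{N+l_0+l_{p,N+1}})$, with $l_0 = [(N+1)/p)$ and $l_{p,N+1}$ as defined; everything else (linearity, the scaling substitution, and the algebra of the split) is routine.

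Finally, for part (ii) with $p = m \in \mathbb{N}$, I would treat $\tilde{J}_m^{\pm}[a](\lambda)$ by splitting $\int_{-\infty}^{\infty} = \int_0^\infty + \int_{-\infty}^0$ exactly as in the proof of Theorem \ref{Os_m}. The $\int_0^\infty$ piece is handled verbatim as in (i), producing the $\tilde{I}_{m,k+1}^{\pm}$. For $\int_{-\infty}^0$ I would substitute $x = -y$: the phase becomes $e^{\pm(-1)^m i\lambda y^m}$, and since $(a(-\,\cdot\,))^{(k)}(0) = (-1)^k a^{(k)}(0)$ the Taylor coefficients acquire signs $(-1)^k$, so this piece contributes $(-1)^k \tilde{I}_{m,k+1}^{\pm\pm_m}$ with the integrals of Definition \ref{generalized_Fresnel_pm_pm_m}. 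Adding the two halves gives the coefficients $\tilde{c}_k^{\pm} = \tilde{I}_{m,k+1}^{\pm} + (-1)^k \tilde{I}_{m,k+1}^{\pm\pm_m}$ and, after combining the two $N!$-remainders into a single two-sided oscillatory integral, the remainder estimate follows by applying the bound from (i) to each half, now with $l_{m,N+1}$ in place of $l_{p,N+1}$.
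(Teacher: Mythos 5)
Your proposal follows the paper's proof essentially step for step: Taylor expansion of $a$ at the origin, linearity of the oscillatory integral together with the scaling $x=\lambda^{-1/p}y$ to produce the generalized Fresnel integrals $\tilde{I}_{p,k+1}^{\pm}\lambda^{-(k+1)/p}$, Theorem \ref{Lax02} (v) with $q=N+1$ for the remainder integral, the split of the sum at $k=N-[p]$, and the reflection $x=-y$ reducing part (ii) to part (i). The only deviation is your use of the integral form of the Taylor remainder in place of the paper's Lagrange form $a^{(N)}(\theta x)$; this is a minor (and arguably more careful) variant of the same step rather than a different route, since the paper's $\theta$ implicitly depends on $x$.
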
 

\begin{proof} 
Since the lower side of double signs $\pm$ can be obtained as the conjugate of the upper one, 
we shall show the upper one. 

(i) 
Suppose $p > 0$. 
By Theorem \ref{Lax02} (iv) in $\S 3$, there exists the following oscillatory integral: 
\begin{align} 
\tilde{I}_{p,1}^{+}[a](\lambda) 
:= Os\text{-}\int_{0}^{\infty} e^{i\lambda x^{p}} a(x) dx. 
\notag 
\end{align} 
By Taylor expansion of $a(x)$ at $x=0$, for any $N \in \mathbb{N}$ such that $N \geq p+1$, 
\begin{align} 
\tilde{I}_{p,1}^{+}[a](\lambda) 
= Os\text{-}\int_{0}^{\infty} e^{i\lambda x^{p}} \left\{ \sum_{k=0}^{N-1} \frac{a^{(k)}(0)}{k!} x^{k} + \frac{x^{N}}{N!} a^{(N)} (\theta x) \right\} dx, 
\notag 
\end{align} 
where $0 < \theta < 1$. 
By Remark \ref{rem01} in $\S 3$, $a^{(N)}(\theta x) \in \mathcal{A}^{\tau+\delta N}_{\delta}(\mathbb{R})$. 
Then by Theorem \ref{Lax02} (iv) in $\S 3$, 
there exist the following oscillatory integrals: 
\begin{align} 
I_{1}(\lambda)
:= \sum_{k=0}^{N-1} \frac{a^{(k)}(0)}{k!} Os\text{-}\int_{0}^{\infty} e^{i\lambda x^{p}} x^{k} dx 
\notag 
\end{align} 
and 
\begin{align} 
I_{2}(\lambda) 
:= \frac{1}{N!} \tilde{I}_{p,N+1}^{+}[a^{(N)} (\theta x)](\lambda) 
:= \frac{1}{N!} Os\text{-}\int_{0}^{\infty} e^{i\lambda x^{p}} x^{N} a^{(N)} (\theta x) dx. 
\notag 
\end{align} 
Hence 
\begin{align} 
\tilde{I}_{p,1}^{+}[a](\lambda) 
= I_{1}(\lambda) + I_{2}(\lambda). 
\label{I_1+I_2} 
\end{align} 

As to $I_{1}(\lambda)$, 
by change of variable $x=\lambda^{-1/p}y$ combining with Theorem \ref{th01} (i) in $\S 4$, we have 
\begin{align} 
I_{1}(\lambda) 
&= \sum_{k=0}^{N-1} \frac{a^{(k)}(0)}{k!} \lim_{\varepsilon \to +0} \lim_{\substack{u \to +0\\v \to \infty}} \int_{u}^{v} e^{i\lambda x^{p}} x^{k} \chi(\varepsilon x) dx \notag \\ 
&= \sum_{k=0}^{N-1} \frac{a^{(k)}(0)}{k!} \lim_{\varepsilon \to +0} \lim_{\substack{u \to +0\\v \to \infty}} \int_{\lambda^{1/p} u}^{\lambda^{1/p} v} e^{iy^{p}} (\lambda^{-\frac{1}{p}} y)^{k} \chi (\varepsilon \lambda^{-\frac{1}{p}} y) \lambda^{-\frac{1}{p}} dy \notag \\ 
&= \sum_{k=0}^{N-1} \tilde{I}_{p,k+1}^{+} \frac{a^{(k)}(0)}{k!} \lambda ^{-\frac{k+1}{p}}, 
\label{I_1} 
\end{align} 
where $\chi \in \mathcal{S}(\mathbb{R})$ with $\chi(0) = 1$, $0 < \varepsilon < 1$ 
and $\tilde{I}_{p,k+1}^{+}$ is a generalized Fresnel integral defined by \eqref{generalized_Fresnel_integral_def} in \S 4. 

As to $I_{2}(\lambda)$, 
since $N+1 > p$, 
by Theorem \ref{Lax02} (v) and \eqref{A_tau_delta_def03} in $\S 3$, 
there exists a positive constant $C_{p,N+1}$ such that for any $\lambda \geq 1$, 
\begin{align} 
| I_{2}(\lambda) | 
\leq C_{p,N+1} (N!)^{-1} |a|^{(\tau)}_{N+l_{0}+ l_{p,N+1}} \lambda^{-\frac{N-p+1}{p}}, 
\label{I_2_lambda_est} 
\end{align} 
where $l_{0}:=[(N+1)/p)$ and $l_{p,N+1} := [(N+1+\tau)^{+}/(p-1-\delta)]+1$. 
Here in order to find same or lower order terms than $\lambda^{-(N-p+1)/p}$ in \eqref{I_1}, 
solving the inequality $\lambda ^{-(k+1)/p} \leq \lambda ^{-(N-p+1)/p}$ on $k \in \mathbb{Z}_{\geq 0}$ for any $\lambda \geq 1$, then $k \geq N-p$. 
Since $[p] \leq p < [p]+1$, then $N-[p] \geq N-p > N-[p]-1$. 
Thus let $k=N-[p]$. Then since $k \geq N-p$, 
\begin{align} 
\bigg| \sum_{k=N-[p]}^{N-1} \tilde{I}_{p,k+1}^{+} \frac{a^{(k)}(0)}{k!} \lambda ^{-\frac{k+1}{p}} \bigg| 
\leq C \max_{k < N} |a^{(k)}(0)| \lambda^{-\frac{N-p+1}{p}}, 
\label{sum_N-[p)-1_N-1_est} 
\end{align} 
where $C = \max_{k=N-[p],\dots,N-1} (| \tilde{I}_{p,k+1}^{+} |/k!)$. 
Therefore put 
\begin{align} 
R_{N}^{+}(\lambda) 
:= &\sum_{k=N-[p]}^{N-1} \tilde{I}_{p,k+1}^{+} \frac{a^{(k)}(0)}{k!} \lambda ^{-\frac{k+1}{p}} + I_{2}(\lambda). 
\notag 
\end{align} 
Then 
according to \eqref{I_1+I_2}, \eqref{I_1}, \eqref{sum_N-[p)-1_N-1_est} and \eqref{I_2_lambda_est}, we obtain 
\begin{align} 
\tilde{I}_{p,1}^{+}[a](\lambda) 
= \sum_{k=0}^{N-[p]-1} \tilde{I}_{p,k+1}^{+} \frac{a^{(k)}(0)}{k!} \lambda ^{-\frac{k+1}{p}} + R_{N}^{+}(\lambda) 
\notag 
\end{align} 
and for any $\lambda \geq 1$, 
\begin{align} 
| R_{N}^{+}(\lambda) | 
\leq C_{N} (\max_{k < N} |a^{(k)}(0)| + |a|^{(\tau)}_{N+l_{0}+ l_{p,N+1}}) \lambda^{-\frac{N-p+1}{p}}, 
\notag 
\end{align} 
where $C_{N} = \max \{ C, C_{p,N+1} (N!)^{-1} \}$. 

(ii) 
Suppose $m \in \mathbb{N}$. 
By Theorem \ref{Os_m} (iii) in $\S 3$, there exists the following oscillatory integral: 
\begin{align} 
\tilde{J}_{m}^{+}[a](\lambda) 
:= Os\text{-}\int_{-\infty}^{\infty} e^{i\lambda x^{m}} a(x) dx. 
\notag 
\end{align} 

We take $N \in \mathbb{N}$ such that $N > m$. Since $N \geq m+1$ and $[m]=m$, by (i), we have 
\begin{align} 
\tilde{I}_{m,1}^{+}[a](\lambda) 
:= Os\text{-}\int_{0}^{\infty} e^{i\lambda x^{m}} a(x) dx 
&= \sum_{k=0}^{N-m-1} \tilde{I}_{m,k+1}^{+} \frac{a^{(k)}(0)}{k!} \lambda ^{-\frac{k+1}{m}} + R_{N}^{+}(\lambda) 
\label{tilde_I_m_1_+_a} 
\end{align} 
and 
\begin{align} 
R_{N}^{+}(\lambda) 
:= &\sum_{k=N-m}^{N-1} \tilde{I}_{m,k+1}^{+} \frac{a^{(k)}(0)}{k!} \lambda ^{-\frac{k+1}{m}} 
+ \frac{1}{N!} Os\text{-} \int_{0}^{\infty} e^{i\lambda x^{m}} x^{N} a^{(N)} (\theta x) dx, 
\label{R_N_+} 
\end{align} 
where $0 < \theta < 1$ and $\tilde{I}_{m,k+1}^{+}$ is a generalized Fresnel integral defined by \eqref{generalized_Fresnel_integral_def} in \S 4.  
And then by (i), there exists a positive constant $C_{m,N}^{(1)}$ such that for any $\lambda \geq 1$, 
\begin{align} 
| R_{N}^{+}(\lambda) | 
\leq C_{m,N}^{(1)} (\max_{k < N} |a^{(k)}(0)| + |a|^{(\tau)}_{N+l_{0}+ l_{m,N+1}}) \lambda^{-\frac{N-m+1}{m}}, 
\label{R_N_+_lambda_est_C_3} 
\end{align} 
where $l_{0}:=[(N+1)/m)$ and $l_{m,N+1} := [(N+1+\tau)^{+}/(m-1-\delta)]+1$. 

By \eqref{Os_m_varphi_change_of_variable} and \eqref{Os_m_psi_change_of_variable} in $\S 3$, since $\varphi + \psi \equiv 1$, 
\begin{align} 
&Os\text{-}\int_{-\infty}^{0} e^{i\lambda x^{m}} a(x) dx 
= Os\text{-}\int_{0}^{\infty} e^{(-1)^{m}i\lambda y^{m}} a(-y) dy. 
\label{Os_int_-infty_0_e_i_lambda_x_m_a(x)_dx} 
\end{align} 

Hence thanks to \eqref{tilde_I_m_1_+_a}, \eqref{R_N_+} and \eqref{Os_int_-infty_0_e_i_lambda_x_m_a(x)_dx}, we obtain 
\begin{align} 
Os\text{-}\int_{-\infty}^{0} e^{i\lambda x^{m}} a(x) dx 
= \sum_{k=0}^{N-m-1} (-1)^{k} \tilde{I}_{m,k+1}^{\pm_{m}} \frac{a^{(k)}(0)}{k!} \lambda ^{-\frac{k+1}{m}} + R_{N}^{\pm_{m}}(\lambda) 
\label{Os_int_-infty_0} 
\end{align} 
and 
\begin{align} 
&R_{N}^{\pm_{m}}(\lambda) \notag \\ 
:&= \sum_{k=N-m}^{N-1} (-1)^{k} \tilde{I}_{m,k+1}^{\pm_{m}} \frac{a^{(k)}(0)}{k!} \lambda ^{-\frac{k+1}{m}} + \frac{1}{N!} Os\text{-}\int_{0}^{\infty} e^{(-1)^{m}i\lambda y^{m}} y^{N} a^{(N)}(-\theta y) dy \notag \\ 
&= \sum_{k=N-m}^{N-1} (-1)^{k} \tilde{I}_{m,k+1}^{\pm_{m}} \frac{a^{(k)}(0)}{k!} \lambda ^{-\frac{k+1}{m}} + \frac{1}{N!} Os\text{-} \int_{-\infty}^{0} e^{i\lambda x^{m}} x^{N} a^{(N)} (\theta x) dx, 
\label{R_N_pm_m} 
\end{align} 
where 
$\tilde{I}_{m,k+1}^{\pm_{m}}$ is a generalized Fresnel integral defined by \eqref{generalized_Fresnel_integral_def_pm_pm_m_type}. 
And then by (i), similarly to \eqref{R_N_+_lambda_est_C_3}, there exists a positive constant $C_{m,N}^{(2)}$ such that for any $\lambda \geq 1$, 
\begin{align} 
| R_{N}^{\pm_{m}}(\lambda) | 
\leq C_{m,N}^{(2)} (\max_{k < N} |a^{(k)}(0)| + |a|^{(\tau)}_{N+l_{0}+ l_{m,N+1}}) \lambda^{-\frac{N-m+1}{m}}, 
\label{R_N_pm_lambda_est_C_4} 
\end{align} 
where $l_{0}:=[(N+1)/m)$ and $l_{m,N+1} := [(N+1+\tau)^{+}/(m-1-\delta)]+1$. 

Hence plugging \eqref{R_N_pm_m} into \eqref{R_N_+}, put 
\begin{align} 
\tilde{R}_{N}^{+}(\lambda) 
:&= \sum_{k=N-m}^{N-1} \tilde{c}_{k}^{+} \frac{a^{(k)}(0)}{k!} \lambda ^{-\frac{k+1}{m}} 
+ \frac{1}{N!} Os\text{-} \int_{-\infty}^{\infty} e^{i\lambda x^{m}} x^{N} a^{(N)} (\theta x) dx, 
\notag 
\end{align} 
where 
\begin{align} 
\tilde{c}_{k}^{+} 
&:= \tilde{I}_{m,k+1}^{+} + (-1)^{k} \tilde{I}_{m,k+1}^{\pm_{m}}. 
\notag 
\end{align} 
Then \eqref{tilde_I_m_1_+_a} and \eqref{Os_int_-infty_0} lead us into 
\begin{align} 
\tilde{J}_{m}^{+}[a](\lambda) 
:= Os\text{-}\int_{-\infty}^{\infty} e^{i\lambda x^{m}} a(x) dx 
&= \sum_{k=0}^{N-m-1} \tilde{c}_{k}^{+} \frac{a^{(k)}(0)}{k!} \lambda ^{-\frac{k+1}{m}} + \tilde{R}_{N}^{+}(\lambda), 
\notag 
\end{align} 
and by \eqref{R_N_+_lambda_est_C_3} and \eqref{R_N_pm_lambda_est_C_4}, for any $\lambda \geq 1$, we have 
\begin{align} 
| \tilde{R}_{N}^{+}(\lambda) | 
\leq \tilde{C}_{N} (\max_{k < N} |a^{(k)}(0)| + |a|^{(\tau)}_{N+l_{0}+ l_{m,N+1}}) \lambda^{-\frac{N-m+1}{m}}, 
\notag 
\end{align} 
where $\tilde{C}_{N} = \max \{ C_{m,N}^{(1)}, C_{m,N}^{(2)} \}$. 
This complete the proof. 
\end{proof} 

In particular, if $m=2l-1$ and $m=2l$ for $l \in \mathbb{N}$, then the following holds: 
\begin{cor} 
\label{cor01} 
Assume that $\lambda > 0$. 
For any $m \in \mathbb{N}$, 
the following hold: 
\begin{enumerate} 
\item[(i)] 
If $m=2l-1$ for $l \in \mathbb{N}$, then for any $N \in \mathbb{N}$ such that $N \geq l$, as $\lambda \to \infty$, 
\begin{align} 
&\tilde{J}_{2l-1}^{\pm}[a](\lambda) 
:= Os\text{-}\int_{-\infty}^{\infty} e^{\pm i\lambda x^{2l-1}} a(x) dx \notag \\ 
&= \frac{2}{2l-1} \sum_{k=0}^{N-l} \left\{ \cos \frac{\pi (2k+1)}{2(2l-1)} \varGamma \left( \frac{2k+1}{2l-1} \right) \frac{a^{(2k)}(0)}{(2k)!} \lambda ^{-\frac{2k+1}{2l-1}} \right. \notag \\ 
&\hspace{0.5cm}\left. \pm i \sin \frac{\pi (2k+2)}{2(2l-1)} \varGamma \left( \frac{2k+2}{2l-1} \right) \frac{a^{(2k+1)}(0)}{(2k+1)!} \lambda ^{-\frac{2k+2}{2l-1}} \right\} + O\left( \lambda ^{-\frac{2(N-l+1)}{2l-1}} \right). 
\notag 
\end{align} 
\item[(ii)] 
If $m=2l$ for $l \in \mathbb{N}$, then for any $N \in \mathbb{N}$ such that $N \geq l$, as $\lambda \to \infty$, 
\begin{align} 
\tilde{J}_{2l}^{\pm}[a](\lambda) 
:&= Os\text{-}\int_{-\infty}^{\infty} e^{\pm i\lambda x^{2l}} a(x) dx \notag \\ 
&= 2 \sum_{k=0}^{N-l} \tilde{I}_{2l,2k+1}^{\pm} \frac{a^{(2k)}(0)}{(2k)!} \lambda ^{-\frac{2k+1}{2l}} + O\left( \lambda ^{-\frac{N-l+1}{l}} \right), 
\notag 
\end{align} 
where 
\begin{align} 
\tilde{I}_{2l,2k+1}^{\pm} 
:= Os\mbox{-}\int_{0}^{\infty} e^{\pm ix^{2l}} x^{2k} dx 
= (2l)^{-1} e^{\pm i\frac{\pi}{2} \frac{2k+1}{2l}} \varGamma \left( \frac{2k+1}{2l} \right) 
\notag 
\end{align} 
are generalized Fresnel integrals defined by \eqref{generalized_Fresnel_integral_def} in \S 4. 
\item[(iii)] 
If $m=1$, then for any $N \in \mathbb{N}$, as $\lambda \to \infty$, 
\begin{align} 
\tilde{J}_{1}^{\pm}[a](\lambda) 
:&= Os\text{-}\int_{-\infty}^{\infty} e^{\pm i\lambda x} a(x) dx 
= O\left( \lambda ^{-2N} \right). 
\notag 
\end{align} 
\item[(iv)] 
If $m=2$, then for any $N \in \mathbb{N}$, as $\lambda \to \infty$, 
\begin{align} 
\tilde{J}_{2}^{\pm}[a](\lambda) 
:&= Os\text{-}\int_{-\infty}^{\infty} e^{\pm i\lambda x^{2}} a(x) dx \notag \\ 
&= \sqrt{\pi} \sum_{k=0}^{N-1} 
e^{\pm i\frac{\pi}{2}(k+\frac{1}{2})} \frac{a^{(2k)}(0)}{4^{k} k!} \lambda ^{-k-\frac{1}{2}} + O\left( \lambda ^{-N} \right), 
\notag 
\end{align} 
\end{enumerate} 
where double signs $\pm$ are in same order. 
\end{cor}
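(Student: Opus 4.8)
The plan is to obtain all four cases as specializations of Theorem~\ref{th02}~(ii), the entire task reducing to simplifying the coefficient
\begin{align}
\tilde{c}_{k}^{\pm}
= \tilde{I}_{m,k+1}^{\pm} + (-1)^{k} \tilde{I}_{m,k+1}^{\pm \pm_{m}}
= m^{-1} \left\{ e^{\pm i\frac{\pi}{2} \frac{k+1}{m}} + (-1)^{k} e^{\pm (-1)^{m} i\frac{\pi}{2} \frac{k+1}{m}} \right\} \varGamma \left( \frac{k+1}{m} \right)
\notag
\end{align}
according to the parity of $m$, and then calibrating the truncation index so that the listed terms and the claimed remainder order are reproduced. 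Since $\pm_{m} = -$ for odd $m$ and $\pm_{m} = +$ for even $m$, the conjugate sign appears in the second exponent exactly when $m$ is odd. In both (i) and (ii) I would invoke Theorem~\ref{th02}~(ii) with its running index taken to be $2N+1$ (the corollary's $N$ being a distinct variable); one checks that the admissibility condition $2N+1 > m$ is equivalent to $N \ge l$ in each case, matching the stated range.

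For (i), where $m = 2l-1$ is odd, the brace equals $e^{\pm i\theta_{k}} + (-1)^{k} e^{\mp i\theta_{k}}$ with $\theta_{k} = \frac{\pi}{2}\frac{k+1}{2l-1}$, so by Euler's formula it is $2\cos\theta_{k}$ for even $k$ and $\pm 2i\sin\theta_{k}$ for odd $k$. Writing $k = 2k'$ and $k = 2k'+1$ separately and noting that with the index $2N+1$ the theorem's sum runs over $0 \le k \le 2(N-l)+1$, both parities contribute precisely $k' = 0, \dots, N-l$, which assembles into the cosine series and the $\pm i$ sine series displayed; the theorem's remainder, of order $\lambda^{-(2N-2l+3)/(2l-1)}$, is then dominated by the asserted $O(\lambda^{-2(N-l+1)/(2l-1)})$ for $\lambda \ge 1$. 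For (ii), where $m = 2l$ is even, the brace is $(1+(-1)^{k}) e^{\pm i\theta_{k}}$, which vanishes for odd $k$ and is $2 e^{\pm i\theta_{k}}$ for even $k$; hence only even $k = 2k'$ survive with $\tilde{c}_{2k'}^{\pm} = 2\tilde{I}_{2l,2k'+1}^{\pm}$, and the remainder exponent $(2N+1-2l+1)/(2l)$ simplifies exactly to the claimed $(N-l+1)/l$.

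Cases (iii) and (iv) are the instances $l=1$ of (i) and (ii). In (iii), with $m = 2l-1 = 1$, every summand of (i) carries a factor $\cos\frac{\pi(2k+1)}{2} = 0$ or $\sin(\pi(k+1)) = 0$, so the whole sum vanishes and only the remainder $O(\lambda^{-2N})$ survives, the familiar rapid decay of a Fourier-type integral. In (iv), with $m = 2l = 2$, the surviving even-index coefficients are $\tilde{c}_{2k}^{\pm} = 2\tilde{I}_{2,2k+1}^{\pm} = e^{\pm i\frac{\pi}{2}(k+\frac{1}{2})} \varGamma(k+\frac{1}{2})$, and I would finish by inserting the Legendre value $\varGamma(k+\frac{1}{2}) = \frac{(2k)!}{4^{k} k!}\sqrt{\pi}$ to cancel the factor $(2k)!$ and recover the displayed series, the remainder $O(\lambda^{-(N-1+1)/1}) = O(\lambda^{-N})$ being exactly as claimed. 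The only genuinely delicate point throughout is this index calibration: one must choose the theorem's running index large enough that every term named in the corollary is actually produced, while keeping the discarded tail inside the asserted $O(\cdot)$; once that is fixed, only the parity simplification of $\tilde{c}_{k}^{\pm}$ and the Gamma-function identity remain.
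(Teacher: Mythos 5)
Your proposal is correct and follows essentially the same route as the paper: specialize Theorem \ref{th02} (ii) with a doubled truncation index, split the sum by the parity of the running index (Euler's formula for odd $m$, vanishing of the odd-index coefficients for even $m$), and finish (iv) with $\varGamma(k+\tfrac{1}{2}) = \tfrac{(2k)!}{4^{k}k!}\sqrt{\pi}$. The only (immaterial) difference is that in case (i) the paper takes the theorem's index to be $2N$ rather than your $2N+1$, so it absorbs the top sine term into the remainder instead of producing it in the main sum; your calibration is if anything slightly cleaner.
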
 

\begin{proof} 
(i) Let $m = 2l-1$ and $N \in \mathbb{N}$ such that $N \geq l$. Then since $2N \geq 2l > 2l-1 = m$, by Theorem \ref{th02} (ii), as $\lambda \to \infty$, 
\begin{align} 
\tilde{J}_{2l-1}^{\pm}[a](\lambda) 
= \sum_{j=0}^{2N-(2l-1)-1} \tilde{c}_{j}^{\pm} \frac{a^{(j)}(0)}{j!} \lambda ^{-\frac{j+1}{2l-1}} + O\left( \lambda ^{-\frac{2N-2l+2}{2l-1}} \right), 
\notag 
\end{align} 
where 
\begin{align} 
\tilde{c}_{j}^{\pm} 
&= (2l-1)^{-1} \left\{ e^{\pm i\frac{\pi}{2} \frac{j+1}{2l-1}} + (-1)^{j} e^{\mp i\frac{\pi}{2} \frac{j+1}{2l-1}} \right\} \varGamma \left( \frac{j+1}{2l-1} \right) \notag \\ 
&= 
\begin{cases} 
\dfrac{2}{2l-1} \cos \dfrac{\pi(2k+1)}{2(2l-1)} \varGamma \left( \dfrac{2k+1}{2l-1} \right) & \text{for $j=2k$}, \\ 
\dfrac{\pm 2i}{2l-1} \sin \dfrac{\pi(2k+2)}{2(2l-1)} \varGamma \left( \dfrac{2k+2}{2l-1} \right) & \text{for $j=2k+1$}. 
\end{cases} 
\notag 
\end{align} 
Here we used $e^{\pm i\theta} + e^{\mp i\theta} = 2\cos \theta$ and $e^{\pm i\theta} - e^{\mp i\theta} = \pm 2i\sin \theta$ for $\theta \in \mathbb{R}$. 
Hence as $\lambda \to \infty$, 
\begin{align} 
&\tilde{J}_{2l-1}^{\pm}[a](\lambda) 
= \frac{2}{2l-1} \sum_{k=0}^{N-l} \left\{ \cos \frac{\pi(2k+1)}{2(2l-1)} \varGamma \left( \frac{2k+1}{2l-1} \right) \frac{a^{(2k)}(0)}{(2k)!} \lambda ^{-\frac{2k+1}{2l-1}} \right. \notag \\ 
&\hspace{0.25cm}\left. \pm i \sin \frac{\pi(2k+2)}{2(2l-1)} \varGamma \left( \frac{2k+2}{2l-1} \right) \frac{a^{(2k+1)}(0)}{(2k+1)!} \lambda ^{-\frac{2k+2}{2l-1}} \right\} + O\left( \lambda ^{-\frac{2(N-l+1)}{2l-1}} \right). 
\notag 
\end{align} 

(ii) Let $m = 2l$ and $N \in \mathbb{N}$ such that $N \geq l$. Then since $2N+1 > 2N \geq 2l = m$, by Theorem \ref{th02} (ii), as $\lambda \to \infty$, 
\begin{align} 
\tilde{J}_{2l}^{\pm}[a](\lambda) 
= \sum_{j=0}^{2N+1-2l-1} \tilde{c}_{j}^{\pm} \frac{a^{(j)}(0)}{j!} \lambda ^{-\frac{j+1}{2l}} + O\left( \lambda ^{-\frac{2N+1-2l+1}{2l}} \right), 
\notag 
\end{align} 
where 
\begin{align} 
\tilde{c}_{j}^{\pm} 
:= \tilde{I}_{2l,j+1}^{\pm} + (-1)^{j} \tilde{I}_{2l,j+1}^{\pm} 
&= 
\begin{cases} 
2 \tilde{I}_{2l,2k+1}^{\pm} & \text{for $j=2k$}, \\ 
0 & \text{for $j=2k+1$} 
\end{cases} 
\notag 
\end{align} 
for $k \in \mathbb{Z}_{\geq 0}$. 
Hence as $\lambda \to \infty$, 
\begin{align} 
\tilde{J}_{2l}^{\pm}[a](\lambda) 
= 2 \sum_{k=0}^{N-l} \tilde{I}_{2l,2k+1}^{\pm} \frac{a^{(2k)}(0)}{(2k)!} \lambda ^{-\frac{2k+1}{2l}} + O\left( \lambda ^{-\frac{N-l+1}{l}} \right). 
\notag 
\end{align} 

(iii) Let $l=1$ and $N \in \mathbb{N}$. Then since $N \geq 1 = l$, by (i), as $\lambda \to \infty$, 
\begin{align} 
\tilde{J}_{1}^{\pm}[a](\lambda) 
&= 2 \sum_{k=0}^{N-1} \left\{ \cos \frac{\pi(2k+1)}{2} \varGamma (2k+1) \frac{a^{(2k)}(0)}{(2k)!} \lambda ^{-2k-1} \right. \notag \\ 
&\hspace{0.75cm}\left. \pm i \sin \frac{\pi(2k+2)}{2} \varGamma (2k+2) \frac{a^{(2k+1)}(0)}{(2k+1)!} \lambda ^{-2k-2} \right\} + O\left( \lambda ^{-2N} \right) \notag \\ 
&= O\left( \lambda ^{-2N} \right). 
\notag 
\end{align} 

(iv) Let $l=1$ and $N \in \mathbb{N}$. Then since $N \geq 1 = l$, by (ii), as $\lambda \to \infty$, 
\begin{align} 
\tilde{J}_{2}^{\pm}[a](\lambda) 
&= \sum_{k=0}^{N-1} 
e^{\pm i\frac{\pi}{2} \frac{2k+1}{2}} \varGamma \left( \frac{2k+1}{2} \right) \frac{a^{(2k)}(0)}{(2k)!} \lambda ^{-\frac{2k+1}{2}} + O\left( \lambda ^{-N} \right) \notag \\ 
&= \sum_{k=0}^{N-1} 
e^{\pm i\frac{\pi}{2}(k+\frac{1}{2})} \frac{(2k-1)!!}{2^{k}} \sqrt{\pi} \frac{a^{(2k)}(0)}{(2k)!! (2k-1)!!} \lambda ^{-k-\frac{1}{2}} + O\left( \lambda ^{-N} \right) \notag \\ 
&= \sqrt{\pi} \sum_{k=0}^{N-1} 
e^{\pm i\frac{\pi}{2}(k+\frac{1}{2})} \frac{a^{(2k)}(0)}{4^{k} k!} \lambda ^{-k-\frac{1}{2}} + O\left( \lambda ^{-N} \right). 
\notag 
\end{align} 
\end{proof} 

Corollary \ref{cor01} (iii) gives the extension of the property that Fourier transform of $a(x)$ is of $O(\lambda ^{-2N})~(\lambda \to \infty)$ for any $N \in \mathbb{N}$ in $\mathcal{S}(\mathbb{R})$ to $\mathcal{A}^{\tau}_{\delta}(\mathbb{R})$. 
Also by Example \ref{stationary example} in \S 2 and Corollary \ref{cor01} (iv), 
we can consider that Theorem \ref{th02} (ii) is a generalization of Proposition \ref{quadratic phase} in \S 2, 
which is a principal part of the stationary phase method in one variable. 

To the end of the present paper, 
we note that we obtain asymptotic expansion of oscillatory integral in several variables with not only the Morse type phase functions but also singular types 
for examples $A_{k}$, $E_6$, $E_8$-phase functions. 
For details, see \cite{Nagano-Miyazaki02}.

\end{document}